\def\mathbb{\bf}
\def\leqslant{\le}
\def\geqslant{\ge}
\newtheorem{assumption}{Assumption}[section]
\newtheorem{remark}{Remark}[section]
\title{A Multi-dimensional Stochastic Singular Control Problem Via Dynkin Game and Dirichlet Form}
\author{Yipeng Yang \thanks{Department of Mathematics,
University of Missouri-Columbia, Columbia, Missouri, 65211
(yangyip@missouri.edu)}}
\date{}
\begin{document}

\maketitle

\markboth{MULTI-DIMENSIONAL SINGULAR CONTROL}{Y. YANG}

\begin{abstract} The traditional difficulty about
stochastic singular control is to characterize the regularities of
the value function and the optimal control policy. In this paper, a
multi-dimensional singular control problem is considered. We found
the optimal value function and the optimal control policy of this
problem via Dynkin game, whose solution is given by the saddle point
of the cost function. The existence and uniqueness of the solution
to this Dynkin game are proved through an associated variational
inequality problem involving Dirichlet form. As a consequence, the
properties of the value function of this Dynkin game implies the
smoothness of the value function of the stochastic singular control
problem. In this way, we are able to show the existence of a
classical solution to this multi-dimensional singular control
problem, which was traditionally solved in the sense of viscosity
solutions, and this enables the application of the verification
theorem to prove optimality.  \footnote{The idea of this paper was
proposed by Dr. Michael Taksar. Dr. Taksar passed away in February,
2012, however, his contributions should always be remembered.}
\end{abstract}
\begin{keywords} Dynkin game, Dirichlet form, Multi-dimensional
diffusion, Stochastic singular control
\end{keywords}

\begin{AMS} 49J40, 60G40, 60H30, 93E20
\end{AMS}
\section{Introduction and Problem Formulation}

The characterization of the regularities of value function and
optimal policy in stochastic singular control remains a big
challenge in stochastic control theory, especially the higher
dimensional case, see, e.g., \cite{Soner89}. The traditional
approach is to use the viscosity solution technique, see
\cite{Fleming06} \cite{Bayraktar12} \cite{Bassan02}, which usually
yields a less regular solution. Another approach to solve singular
control problems and characterize the regularity of value functions
is through variational inequalities and optimal stopping or Dynkin
game, see, e.g., Karatzas and Zamfirescu \cite{Karatzas05}, Guo and
Tomecek \cite{Guo08}. In \cite{Karatzas85}  Karatzas and Shreve
studied the connection between optimal stopping and singular
stochastic control of one dimensional Brownian motion, and showed
that the region of inaction in the control problem is the optimal
continuation region for the stopping problem. In
\cite{Baldursson97}, the authors established and exploited the
duality between the myopic investor's problem (optimal stopping) and
the social planning problem (stochastic singular control), where an
integral form and change of variable formula were also presented on
this connection. Ma \cite{ma92} dealt with a one dimensional
stochastic singular control problem where the drift term is assumed
to be linear and the diffusion term is assumed to be smooth, and he
showed that the value function is convex and $C^2$ and the
controlled process is a reflected diffusion over an interval. Guo
and Tomecek \cite{Guo09} solved a one dimensional singular control
problem via a switching problem \cite{Guo08}, and showed, using the
smooth fit property \cite{Pham07}, that under some conditions the
value function is continuously differentiable ($C^1$).

It is found that \cite{Fuku02} through the approach via game theory
and optimal stopping, it is possible to show the existence of a
smooth solution. The connection is the following: given a symmetric
Markov process on a locally compact separable metric space, it is
well known that the solution of an optimal stopping problem admits
its quasi continuous version of the solution to a variational
inequality problem involving Dirichlet form, e.g., see Nagai
\cite{Nagai78}. Zabczyk \cite{Zab84} extended this result to a
zero-sum game (Dynkin game). In the one dimensional case, the
integrated form of the value function of the Dynkin game was
identified to be the solution of an associated stochastic singular
control problem, e.g., see Taksar \cite{Taksar85}, Fukushima and
Taksar \cite{Fuku02} where a more general one dimensional diffusion
is assumed. As a result, the classical smooth solution ($C^2$) can
be obtained for this singular control problem.

This paper extends the work by Fukushima and Taksar \cite{Fuku02} to
multi-dimensional stochastic singular control problem. There are
many difficulties in this extension. In the one dimensional singular
control problem, each point in the space has a positive capacity
\cite{Fuku02}, hence the nonexistence of the proper exceptional set.
However, this is no longer the case in multi-dimensional singular
control problem. We overcome this difficulty using the absolute
continuity of the transition function of the underlying process
\cite{Fuku06}. Under some conditions, the optimal control policy of
the one dimensional case is proved to be the reflection of the
diffusion at two boundary points, but the form of the optimal
control policy and the conditions on the regularity of the value
function in multi-dimensional case are much more complicated. For
instance, in the two dimensional case, the boundary of the
continuation region can have various formats, e.g.,  bounded curves,
unbounded curves, singular points, disconnected curves, line
segments, etc. The difficulty in characterizing the continuation
region is due to the fact that its boundary is a free boundary, and
this paper investigates such issues.

In this paper, we are concerned with a multi-dimensional diffusion
on ${\mathbb{R}}^n$:
\begin{equation}\label{omodel}
d{\bf X}_t={\bf \mu}({\bf X}_t)dt+{\bf\sigma}({\bf X}_t)d{\bf B}_t,
\end{equation} where
\begin{displaymath}
{\bf X}_t=\left(\begin{array}{c}X_{1t}\\
\vdots\\
X_{nt}
\end{array}\right), \mu=\left(\begin{array}{c}\mu_1\\
\vdots\\
\mu_n
\end{array}\right), \sigma=\left(\begin{array}{ccc} \sigma_{11} &\  \cdots & \sigma_{1m}\\
\vdots &  & \vdots\ \\
\sigma_{n1} &\ \cdots & \sigma_{nm}\end{array}\right), {\bf
B}_t=\left(\begin{array}{c}B_{1t}\\
\vdots\\
B_{mt}\end{array}\right),
\end{displaymath} in which $\mu_i=\mu_i({\bf X}_t)$ and $\sigma_{i,j}=\sigma_{i,j}({\bf X}_t)$ ($1\leqslant i\leqslant n,1\leqslant j\leqslant
m$) are continuous functions of $X_{1t},X_{2t},...,X_{(n-1)t}$, and
${\bf B}_t$ is $m$-dimensional Brownian motion with $m\geqslant n$.
Thus we are given a system $(\Omega, \mathcal{F},\mathcal{F}_t, {\bf
X},\theta_t,P_{\bf x})$, where $(\Omega,\mathcal{F})$ is a
measurable space, ${\bf X}={\bf X}(\omega)$ is a mapping of $\Omega$
into $C({\mathbb{R}}^n)$, $\mathcal{F}_t=\sigma({\bf X}_s,s\leqslant
t)$, and $\theta_t$ is a shift operator in $\Omega$ such that ${\bf
X}_s(\theta_t\omega)={\bf X}_{s+t}(\omega)$. Here $P_{\bf x}$(${\bf
x}\in{\mathbb{R}}^n$) is a family of measures under which $\{{\bf
X}_t,t\geqslant 0\}$ is an $n$-dimensional diffusion with  initial
state ${\bf x}$. We assume that $\mu$ and $\sigma$ satisfy the usual
Lipschitz growth condition.

A control policy is defined as a pair
$(A_t^{(1)},A_t^{(2)})=\mathcal{S}$ of $\mathcal{F}_t$ adapted
processes which are right continuous and nondecreasing in $t$ and we
assume $A_0^{(1)},A_0^{(2)}$ are nonnegative. Denote $\mathbb{S}$
the set of all admissible policies, whose detailed definition will
be given in Section \ref{mdssc}.

Given a policy $\mathcal{S}=(A_t^{(1)},A_t^{(2)})\in\mathbb{S}$ we
define the following controlled process:
\begin{displaymath}\begin{array}{l}
dX_{1t}=\mu_1dt+\sigma_{11}dB_{1t}+\cdots+\sigma_{1m}dB_{mt},\\
\vdots  \quad\quad \vdots \quad\quad\quad \vdots\\
dX_{nt}=\mu_ndt+\sigma_{n1}dB_{1t}+\cdots+\sigma_{nm}dB_{mt}+dA_t^{(1)}-dA_t^{(2)},\\
{\bf X}_0={\bf x},
\end{array}
\end{displaymath}
with the cost function
\begin{eqnarray}\label{scost}
k_{\mathcal{S}}({\bf x})=E_{\bf x}\left(\int_0^\infty e^{-\alpha
t}h({\bf X_t})dt+\int_0^\infty e^{-\alpha t}\left(f_1({\bf
X}_t)dA_t^{(1)}+f_2({\bf X}_t)dA_t^{(2)}\right)\right),&&\\
f_1({\bf x}),f_2({\bf x})>0,\ \forall {\bf
x}\in{\mathbb{R}}^n.&&\nonumber
\end{eqnarray}
Here we assume that $A_t^{(1)}-A_t^{(2)}$ is the minimal
decomposition of a bounded variation process into a difference of
two increasing processes.

\begin{remark}A natural question is that why the control only applies on one
dimension. The difficulty arises in the step where the value
function of the zero-sum game is integrated (in one dimension) to
obtain the value function of the singular control problem. If the
control were applied to multi dimensions, no result so far is know
on the choice of the direction of integration. This represents a
traditional difficulty in multi-dimensional singular control
problem. Interested readers are referred to \cite{Soner89} for a
result on two dimensional singular control problem.
\end{remark}

There are two types of costs associated with the process ${\bf X_t}$
for each policy $\mathcal{S}$. The first one is the holding cost
$h({\bf X}_t)$ accumulated along time. The second one is the control
cost associated with the processes $(A_t^{(1)},A_t^{(2)})$, and this
cost increases only when $(A_t^{(1)},A_t^{(2)})$ increase.

One looks for a control policy $\mathcal{S}$ that minimizes
$k_{\mathcal{S}}({\bf x})$, i.e.,
\begin{equation}\label{mincostw}
W({\bf x})=\min_{\mathcal{S}\in\mathbb{S}} k_{\mathcal{S}}({\bf x}).
\end{equation}

As an application of this model, a decision maker observes the
expenses of a company under a multi-factor situation but only has
control over one factor, yet she still wants to minimize the total
expected cost. Analogously, by studying the associated maximization
problem, i.e., taking the negative of $\min$, this model can be used
to find the optimal investment policy where an investor observes the
prices of several assets in a portfolio and manages the portfolio by
adjusting one of them. Notice that every time there is a control
action, it yields a certain associated cost, e.g., the transaction
cost.

The rest of this paper is organized as follows: we first introduce
some preliminaries on Dirichlet form and a variational inequality
problem in Section \ref{DformDgame}. In Section \ref{DgameFBP} we
identify conditions for the value function as well as the optimal
policy of the associated Dynkin game. The integrated form of the
value function of this Dynkin game is shown in Section \ref{mdssc}
to be the value of a multi-dimensional singular control problem, and
the optimal control policy is also determined consequently. In the
appendix we shall correct an error found in the paper by Fukushima
and Taksar \cite{Fuku02}.

\section{Dirichlet Form and a Variational Inequality
Problem}\label{DformDgame}

Let $\mathbb{D}$ be a locally compact separable metric space,
${\mathbb m}$ be an everywhere dense positive Radon measure on
$\mathbb{D}$, and $L^2(\mathbb{D},m)$ denotes the $L^2$ space on
$\mathbb{D}$. We assume that the Dirichlet form
$(\mathcal{E},\mathscr{F})$ on $L^2(\mathbb{D},m)$ is regular in the
sense that $\mathscr{F}\cap C_0(\mathbb{D})$ is $\mathcal{E}_1$
dense in $\mathscr{F}$ and is uniformly dense in $C_0(\mathbb{D})$,
where the $\mathcal{E}_1$ norm is defined as follows:
\begin{displaymath}
\|u\|_{\mathcal{E}_1}=\left(\mathcal{E}(u,u)+\int_{\mathbb{D}}u({\bf
x})^2{\mathbb m}(d{\bf x})\right)^{1/2}.
\end{displaymath} Analogously we define
$\mathcal{E}_\alpha(u,v)$  as
$\mathcal{E}_\alpha(u,v)=\mathcal{E}(u,v)+\alpha(u,v)\ (\alpha>0)$,
where
\begin{displaymath}
(u,v)=\int_{\mathbb{D}}u({\bf x})v({\bf x}){\mathbb m}(d{\bf x}).
\end{displaymath}

For this Dirichlet form, there exists an associated Hunt process
${\bf M}=({\bf X}_t,P_{\bf x})$ on $\mathbb{D}$, see \cite{Fuku11},
such that

\begin{displaymath}
p_tf({\bf x}):=E_{\bf x}f({\bf X}_t),\quad {\bf x}\in\mathbb{D}
\end{displaymath}
is a version of $T_tf$ for all $f\in C_0(\mathbb{D})$, where $T_t$
is the $L^2$ semigroup associated with the Dirichlet form
$(\mathcal{E},\mathscr{F})$. Furthermore, the $L^2$-resolvent
$\{G_\alpha,\ \alpha>0\}$ associated with this Dirichlet form
satisfies
\begin{equation}\label{resol}
G_\alpha f\in\mathscr{F},\quad \mathcal{E}_\alpha(G_\alpha
f,u)=(f,u),\quad \forall f\in L^2(\mathbb{D};m),\ \forall
u\in\mathscr{F},
\end{equation} and the resolvent $\{R_\alpha,\ \alpha>0\}$ of the
Hunt process ${\bf M}$ given by
\begin{displaymath}
R_\alpha f({\bf x})=E_{\bf x}\left(\int_0^\infty e^{-\alpha t}f({\bf
X}_t)dt\right),\quad {\bf x}\in\mathbb{D},
\end{displaymath} is a quasi-continuous modification of $G_\alpha f$ for any Borel function $f\in
L^2(\mathbb{D};m)$.

For $\alpha>0$, a measurable function $f$ on $\mathbb{D}$ is called
$\alpha$-excessive if $f({\bf x})\geqslant 0$ and $e^{-\alpha t}p_t
f({\bf x})\uparrow  f({\bf x})$ as $t\downarrow  0$ for any ${\bf
x}\in\mathbb{D}$. A function $f\in\mathscr{F}$ is said to be an
$\alpha$-potential if $\mathcal{E}_\alpha(f,g)\geqslant 0$ for any
$g\in\mathscr{F}$ with $g\geqslant 0$. For any $\alpha$-potential
$f\in\mathscr{F}$, define $\hat{f}({\bf x})=\lim_{t\downarrow 0}p_t
f({\bf x})$, then $f=\hat{f}\ m$-a.e. and $\hat{f}$ is
$\alpha$-excessive (see Section 3 in \cite{Fuku06}). $\hat{f}$ is
called the $\alpha$-excessive regularization of $f$. Furthermore,
any $\alpha$-excessive function is finely continuous (see Theorem
A.2.7 in \cite{Fuku11}).

As  related literature, Nagai \cite{Nagai78} considered an optimal
stopping problem and showed that there exist a quasi continuous
function $w\in\mathscr{F}$ which solves the variational inequality
\begin{displaymath}
w\geqslant g,\quad \mathcal{E}_\alpha(w,u-w)\geqslant 0,\quad
\forall u\in\mathscr{F} {\rm \ with\ }u\geqslant g,
\end{displaymath} and a properly exceptional set $\mathbb{N}$ such
that for all ${\bf x}\in\mathbb{D}/\mathbb{N}$,
\begin{displaymath}
w({\bf x})=\sup_\sigma E_{\bf x}\left(e^{-\alpha\sigma}[g({\bf
X}_\sigma)]\right)=E_{\bf x}\left(e^{-\alpha\hat{\sigma}}[g({\bf
X}_{\hat{\sigma}})]\right),
\end{displaymath} where $g$ is a quasi continuous function in
$\mathscr{F}$ and
\begin{displaymath}\hat{\sigma}=\inf\{t\geqslant 0;w({\bf X}_t)=g({\bf
X}_t)\}.
\end{displaymath} Moreover, $w$ is the smallest $\alpha$-potential
dominating the function $g$ ${\mathbb m}$-a.e.

Zabczyk \cite{Zab84} then extended this result to the solution of
the zero-sum game (Dynkin game) by showing that there exist a quasi
continuous function $V({\bf x})\in\mathscr{F}$ which solves the
variational inequality
\begin{equation}\label{vineqH0}
g\leqslant V\leqslant h\ \ {\mathbb m}\ {\rm a.e.},\
\mathcal{E}_\alpha(V,u-V)\geqslant 0,\quad \forall u\in\mathscr{F},\
g\leqslant u\leqslant h\ \ {\mathbb m}\ {\rm a.e.},
\end{equation} and a properly exceptional set $\mathbb{N}$ such that for all ${\bf
x}\in\mathbb{D}/\mathbb{N}$,
\begin{equation}\label{saddleH0}
V({\bf x})=\sup_\sigma \inf_\tau J_{\bf x}(\tau,\sigma)=\inf_\tau
\sup_\sigma J_{\bf x}(\tau,\sigma)
\end{equation} for any stopping times $\tau$ and $\sigma$, where
\begin{equation}\label{JH0}
J_{\bf x}(\tau,\sigma)=E_{\bf
x}\left(e^{-\alpha(\tau\wedge\sigma)}\left(I_{\sigma\leqslant\tau}g({\bf
X}_\sigma)+I_{\tau<\sigma}h({\bf X}_\tau)\right)\right),
\end{equation} and $g\leqslant h$ $\mathbb m$-a.e. are
quasi-continuous functions in $\mathscr{F}$.

In these works, there always existed an exceptional set
$\mathbb{N}$. Fukushima and Menda \cite{Fuku06} showed that, if the
transition function of ${\bf M}$ satisfies an absolute continuity
condition, i.e.,
\begin{equation}\label{abscont} p_t({\bf x},\cdot)\ll {\mathbb m}(\cdot),
\end{equation} for all $t>0$ and ${\bf x}\in\mathbb{D}$, and
$g,h$ satisfy the following separability
condition:\\
\emph{There exist finite $\alpha$-excessive functions
$v_1,v_2\in\mathscr{F}$ such that, for all ${\bf x}\in\mathbb{D}$,
\begin{equation}\label{sepcond}
g({\bf x})\leqslant v_1({\bf x})-v_2({\bf x})\leqslant h({\bf x}),
\end{equation}}then Zabczyk's result still holds and there does not exist the
exceptional set $\mathbb{N}$. In what follows we shall introduce a
version of Theorem 2 in \cite{Fuku06}, where we used $-f_1,f_2$ in
places of $g,h$ respectively for the convenience of later use.

Let $f_1,f_2\in\mathscr{F}$ be finely continuous functions such that
for all ${\bf x}\in\mathbb{D}$
\begin{equation}\label{bdcond}
-f_1({\bf x})\leqslant f_2({\bf x}),\ |f_1({\bf
x})|\leqslant\phi({\bf x}),\ |f_2({\bf x})|\leqslant \psi({\bf x}),
\end{equation} where $\phi,\psi$ are some finite
$\alpha$-excessive functions, and $f_1,f_2$ are assumed to satisfy
the following separability condition
\begin{equation}\label{sepcondf}
-f_1({\bf x})\leqslant v_1({\bf x})-v_2({\bf x})\leqslant f_2({\bf
x}).
\end{equation}

We further define the set
\begin{equation}\label{dk}
K=\{u\in\mathscr{F}:\ -f_1\leqslant u\leqslant f_2,\ {\mathbb m}
{\rm -a.e.}\}.
\end{equation}
Considering the variational inequality problem
\begin{equation}\label{vineqH0f}
V\in K,\ \mathcal{E}_\alpha(V,u-V)\geqslant 0,\quad \forall u\in K,
\end{equation} we have:

\begin{theorem}\label{fuku2}
Assume conditions (\ref{abscont}), (\ref{bdcond})  and
(\ref{sepcondf}). There exists a finite finely continuous function
$V$ satisfying the variational inequality (\ref{vineqH0f}) and the
identity
\begin{displaymath}
V({\bf x})=\sup_\sigma\inf_\tau J_{\bf
x}(\tau,\sigma)=\inf_\tau\sup_\sigma J_{\bf x}(\tau,\sigma),\quad
\forall {\bf x}\in\mathbb{D},
\end{displaymath} where $\sigma,\tau$ range over all stopping times and
\begin{equation}\label{JH0f}
J_{\bf x}(\tau,\sigma)=E_{\bf
x}\left(e^{-\alpha(\tau\wedge\sigma)}\left(I_{\sigma\leqslant\tau}(-f_1({\bf
X}_\sigma))+I_{\tau<\sigma}f_2({\bf X}_\tau)\right)\right),
\end{equation}

Moreover, the pair $(\hat{\tau},\hat{\sigma})$ defined by
\begin{displaymath}
\hat{\tau}=\inf\{t>0:V({\bf X}_t)=f_2({\bf X}_t)\},\quad
\hat{\sigma}=\inf\{t>0:V({\bf X}_t)=-f_1({\bf X}_t)\},
\end{displaymath} is the saddle point of the game in the sense that
\begin{displaymath}
J_{\bf x}(\hat{\tau},\sigma)\leqslant J_{\bf
x}(\hat{\tau},\hat{\sigma})\leqslant J_{\bf
x}(\tau,\hat{\sigma}),\quad \forall {\bf x}\in\mathbb{D},
\end{displaymath} for all stopping times $\tau,\sigma$.
\end{theorem}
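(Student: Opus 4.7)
The plan is to mirror the argument of Fukushima--Menda \cite{Fuku06}, adapted to the functions $-f_1, f_2$ in place of $g, h$. I would first establish the existence of a solution $V$ to the variational inequality (\ref{vineqH0f}) by a penalization scheme, then identify this $V$ as the value of the Dynkin game via a (super/sub)martingale characterization, and finally verify that the hitting times $\hat\tau, \hat\sigma$ realize the saddle point.

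For existence, I would introduce the penalized equation
\begin{displaymath}
\mathcal{E}_\alpha(V_\epsilon, u) + \tfrac{1}{\epsilon}\bigl((V_\epsilon - f_2)^+ - (-f_1 - V_\epsilon)^+, u\bigr) = 0, \quad \forall u \in \mathscr{F},
\end{displaymath}
for small $\epsilon > 0$. The separability condition (\ref{sepcondf}) guarantees that $v_1 - v_2 \in K$, so the convex set $K$ in (\ref{dk}) is nonempty; combined with the bounds (\ref{bdcond}) by $\alpha$-excessive functions $\phi, \psi$, one obtains an $\mathcal{E}_1$-uniform bound on $V_\epsilon$. Extracting a weakly convergent subsequence in $\mathscr{F}$ and passing to the limit yields $V \in K$ solving (\ref{vineqH0f}). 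Uniqueness follows from the coercivity of $\mathcal{E}_\alpha$ tested against $u = V - V'$ for two solutions. At this stage $V$ is only defined $\mathbb{m}$-a.e., but, since $V$ lies between the two $\alpha$-excessive functions $v_1 + \psi$ and $-v_2 - \phi$, one shows it admits a finely continuous version via its $\alpha$-excessive regularization.

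To identify $V$ with the game value, I would follow Zabczyk's strategy: write $M_t = e^{-\alpha t} V({\bf X}_t) + \int_0^t e^{-\alpha s}\, dA_s$ via the Fukushima decomposition associated with the Revuz measure of the variational inequality, and exploit that on $\{-f_1 < V < f_2\}$ the process $e^{-\alpha t}V({\bf X}_t)$ is a martingale, whereas it is a submartingale on $\{V = -f_1\}$ and a supermartingale on $\{V = f_2\}$. Optional stopping at $\hat\tau \wedge \sigma$ and $\tau \wedge \hat\sigma$ respectively then gives the two-sided inequality
\begin{displaymath}
J_{\bf x}(\hat\tau, \sigma) \leq V({\bf x}) \leq J_{\bf x}(\tau, \hat\sigma)
\end{displaymath}
for all stopping times $\tau, \sigma$, which yields both the saddle-point property and the identity $V({\bf x}) = \sup_\sigma \inf_\tau J_{\bf x}(\tau,\sigma) = \inf_\tau \sup_\sigma J_{\bf x}(\tau,\sigma)$.

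The main obstacle, and the reason the theorem is stated pointwise for every ${\bf x} \in \mathbb{D}$ rather than quasi-everywhere, is eliminating the properly exceptional set $\mathbb{N}$ appearing in (\ref{vineqH0})--(\ref{JH0}). This is precisely where the absolute continuity hypothesis (\ref{abscont}) enters: under $p_t({\bf x},\cdot) \ll \mathbb{m}(\cdot)$, every $\mathbb{m}$-polar set is actually polar and quasi-continuous functions that coincide $\mathbb{m}$-a.e. coincide at every point via their finely continuous versions, so the resolvent representation $R_\alpha f({\bf x}) = G_\alpha f({\bf x})$ holds for all ${\bf x}$ rather than up to exceptional sets. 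Combining this with the separability bound (\ref{sepcondf}), which is what makes the penalized approximants uniformly dominated and allows pointwise control via the $\alpha$-excessive majorants $\phi, \psi$, promotes all $\mathbb{m}$-a.e. statements above to hold for every starting point ${\bf x} \in \mathbb{D}$.
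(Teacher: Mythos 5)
The paper offers no proof of this theorem: it is stated as a direct import of Theorem 2 of Fukushima and Menda \cite{Fuku06} with $g=-f_1$, $h=f_2$, so there is no in-paper argument to compare against. Your sketch reconstructs precisely the strategy of that reference (bilateral penalization for the variational inequality, a martingale/optional-stopping identification of the game value, and use of the absolute continuity condition (\ref{abscont}) together with the separability condition (\ref{sepcondf}) to remove the properly exceptional set), and is consistent with it. One caveat: the assertion that under (\ref{abscont}) quasi-continuous functions agreeing ${\mathbb m}$-a.e.\ agree everywhere is false as stated (they agree only quasi-everywhere); the pointwise identification works only for $\alpha$-excessive functions via their regularizations $\lim_{t\downarrow 0}e^{-\alpha t}p_tf$, which is exactly why $V$ must first be expressed between, and as a difference of, $\alpha$-potentials using (\ref{sepcondf}) and (\ref{bdcond}) before the exceptional set can be discarded --- your sketch gestures at this, but a complete proof has to make that reduction explicit.
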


For a given function $H\in L^2(\mathbb{D};m)$ one looks for a
solution $V\in K$ to the following variational inequality problem
\begin{equation}\label{vi} \mathcal{E}_\alpha(V,u-V)\geqslant(H,u-V),\ \ \forall
u\in K.
\end{equation} Then we have the following
proposition:
\begin{proposition}\label{solVIeq} There exists a unique finite finely continuous function $V\in K$
which solves (\ref{vi}).
\end{proposition}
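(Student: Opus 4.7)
The plan is to invoke the Lions--Stampacchia theorem for variational inequalities on closed convex subsets of a Hilbert space to obtain a unique solution in $\mathscr{F}$, and then to shift out the source term $H$ by subtracting the resolvent $G_\alpha H$ so that Theorem \ref{fuku2} can be applied to upgrade this solution to one that is finite and finely continuous everywhere on $\mathbb{D}$.

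I would begin by viewing $(\mathscr{F},\mathcal{E}_\alpha)$ as a Hilbert space and verifying that $K$ is nonempty (the separating function $v_1-v_2$ from (\ref{sepcondf}) lies in $K$), closed (the $\mathcal{E}_\alpha$-norm dominates the $L^2(m)$-norm, so any $\mathcal{E}_\alpha$-limit admits an $m$-a.e.\ convergent subsequence along which the pointwise bounds $-f_1\le\cdot\le f_2$ pass to the limit) and convex. The bilinear form $\mathcal{E}_\alpha$ is automatically symmetric, continuous and coercive, being the square of the defining Hilbert norm, while the linear form $u\mapsto(H,u)$ is continuous on $\mathscr{F}$ via $|(H,u)|\le\alpha^{-1/2}\|H\|_{L^2}\,\mathcal{E}_\alpha(u,u)^{1/2}$. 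Stampacchia's theorem then produces a unique $V\in K\subset\mathscr{F}$ satisfying (\ref{vi}); uniqueness alone is read off directly by testing the inequality for two candidate solutions against each other and using coercivity.

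Next, set $w=G_\alpha H\in\mathscr{F}$ and $U=V-w$. Because $\mathcal{E}_\alpha(w,z)=(H,z)$ for all $z\in\mathscr{F}$ by (\ref{resol}), the inequality (\ref{vi}) is equivalent to $\mathcal{E}_\alpha(U,\tilde{u}-U)\ge 0$ over the shifted set $\tilde{K}=\{\tilde{u}\in\mathscr{F}:-(f_1+w)\le\tilde{u}\le f_2-w\text{ $m$-a.e.}\}$. I would then verify the hypotheses of Theorem \ref{fuku2} for the new obstacle pair $(f_1+w,f_2-w)$: decomposing $w=G_\alpha H^+-G_\alpha H^-$ permits enlarging the envelopes of (\ref{bdcond}) by the finite $\alpha$-excessive function $G_\alpha|H|$, and the separability condition (\ref{sepcondf}) transports via $\tilde{v}_1=v_1+G_\alpha H^-$ and $\tilde{v}_2=v_2+G_\alpha H^+$, each being a sum of $\alpha$-excessive functions and therefore $\alpha$-excessive. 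Theorem \ref{fuku2} then delivers a finite finely continuous $U$, and since $w$ is itself finely continuous as a difference of the finite $\alpha$-excessive functions $G_\alpha H^\pm$ under (\ref{abscont}), $V=U+w$ is finite and finely continuous; the $m$-a.e.\ uniqueness from Stampacchia combined with (\ref{abscont}) upgrades to pointwise uniqueness.

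The hard part will be the bookkeeping that ensures the hypotheses of Theorem \ref{fuku2} survive the shift \emph{pointwise} rather than only $m$-a.e.; concretely, one needs $G_\alpha|H|$ to be everywhere finite so that the enlarged envelopes and the transported separating pair $(\tilde{v}_1,\tilde{v}_2)$ are genuine finite $\alpha$-excessive functions. This forces a mild integrability assumption on $H$, which under (\ref{abscont}) is typically automatic. A fallback route, should the shift be awkward, is a Yosida penalization $\mathcal{E}_\alpha(V_\varepsilon,z)+\varepsilon^{-1}\bigl((V_\varepsilon-f_2)^+-(V_\varepsilon+f_1)^-,z\bigr)=(H,z)$ followed by a monotone passage to the limit, but the resolvent reduction is cleaner since it stays entirely inside the apparatus already established in Theorem \ref{fuku2}.
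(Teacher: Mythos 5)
Your argument is correct and takes essentially the same route as the paper, whose proof is deferred to Proposition 2.1 of \cite{Fuku02}: the Stampacchia argument (equivalently, minimizing $u\mapsto\tfrac{1}{2}\mathcal{E}_\alpha(u,u)-(H,u)$ over the nonempty closed convex set $K$) for existence and uniqueness, followed by the resolvent shift $U=V-G_\alpha H$ to reduce to the $H=0$ case of Theorem \ref{fuku2} for the finite finely continuous version. The one caveat you flag---everywhere-finiteness and fine continuity of $G_\alpha H$ so that the shifted envelopes and separating pair remain genuine finite $\alpha$-excessive functions---is precisely what the paper imposes as a hypothesis in Theorem \ref{DgameHneq0}, so it is consistent with the standing assumptions rather than a gap in your argument.
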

\begin{proof} The proof is essentially identical to the proof of
Proposition 2.1 in \cite{Fuku02} and is omitted here.

\end{proof}

We assume further the following separability condition:
\begin{assumption} There exist finite
$\alpha$-excessive functions $v_1,v_2\in\mathscr{F}$ such that, for
all ${\bf x}\in\mathbb{D}$,
\begin{equation}\label{sepcondHneq0}
-f_1({\bf x})-G_\alpha H({\bf x})\leqslant v_1({\bf x})-v_2({\bf
x})\leqslant f_2({\bf x})-G_\alpha H({\bf x}),\end{equation}
\end{assumption}
then the following result holds:

\begin{theorem}\label{DgameHneq0}
For any function $H\in L^2({\mathbb D};{\mathbb m})$ ) and any
$f_1,f_2\in\mathscr{F}$ such that $f_1({\bf x})+G_\alpha H({\bf x})$
and $f_2({\bf x})-G_\alpha H({\bf x})$ are finely continuous and
bounded by some finite $\alpha$-excessive functions, respectively.
Assuming (\ref{abscont})(\ref{sepcondHneq0}), we put
\begin{eqnarray}\label{Jcost}
J_{\bf x}(\tau,\sigma)&=&E_{\bf x}\left(\int_0^{\tau\wedge\sigma}
e^{-\alpha t}H({\bf X}_t)dt\right)\\
&&+E_{\bf x}\left( e^{-\alpha
(\tau\wedge\sigma)}\left(-I_{\sigma\leqslant\tau}f_1({\bf
X}_\sigma)+I_{\tau<\sigma}f_2({\bf X}_\tau)\right)\right)\nonumber
\end{eqnarray} for any stopping times $\tau,\sigma$. Then the
solution of (\ref{vi})  admits a finite finely continuous value
function of the game
\begin{equation}
V({\bf x})=\inf_\tau\sup_\sigma J_{\bf
x}(\tau,\sigma)=\sup_\sigma\inf_\tau J_{\bf x}(\tau,\sigma),\quad
\forall {\bf x}\in\mathbb{D}.
\end{equation}

Furthermore if we let \begin{displaymath} E_1=\{{\bf x}\in
{\mathbb{D}}: V({\bf x})=-f_1({\bf x})\}, \quad E_2=\{{\bf x}\in
{\mathbb{D}}: V({\bf x})=f_2({\bf x})\},
\end{displaymath} then the hitting times $\hat{\tau}=\tau_{E_2}$,
$\hat{\sigma}=\tau_{E_1}$ is the saddle point of the game
\begin{equation}\label{saddleHn0}
J_{\bf x}(\hat{\tau},\sigma)\leqslant J_{\bf
x}(\hat{\tau},\hat{\sigma})\leqslant J_{\bf x}(\tau,\hat{\sigma})
\end{equation} for any ${\bf x}\in \mathbb{D}$ and any stopping times $\tau,\sigma$. In
particular,
\begin{equation}
V({\bf x})=J_{{\bf x}}(\hat{\tau},\hat{\sigma}),\quad \forall {\bf
x}\in \mathbb{D}.
\end{equation}
\end{theorem}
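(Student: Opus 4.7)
The plan is to reduce Theorem~\ref{DgameHneq0} to the homogeneous Dynkin game of Theorem~\ref{fuku2} by absorbing the running cost $H$ into the terminal payoffs via the resolvent $G_\alpha H$. Specifically, set $\tilde f_1 = f_1 + G_\alpha H$, $\tilde f_2 = f_2 - G_\alpha H$, and $V' = V - G_\alpha H$. First I would check the algebraic fact that $u \in K$ if and only if $u - G_\alpha H \in \tilde K := \{u'\in\mathscr{F}: -\tilde f_1 \leqslant u' \leqslant \tilde f_2,\ {\mathbb m}\text{-a.e.}\}$, and then use the resolvent relation $\mathcal{E}_\alpha(G_\alpha H,u)=(H,u)$ from (\ref{resol}) to rewrite (\ref{vi}) in the form $V'\in\tilde K$, $\mathcal{E}_\alpha(V',u'-V')\geqslant 0$ for all $u'\in\tilde K$, which is exactly the homogeneous variational inequality (\ref{vineqH0f}) for the data $\tilde f_i$.

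Next I would verify that $\tilde f_1,\tilde f_2$ satisfy the hypotheses of Theorem~\ref{fuku2}: the fine continuity and boundedness (\ref{bdcond}) are precisely the assumptions stated in the theorem about $f_1+G_\alpha H$ and $f_2-G_\alpha H$; the inequality $-\tilde f_1\leqslant\tilde f_2$ reduces to $-f_1\leqslant f_2$; and the separability (\ref{sepcondf}) for $\tilde f_i$ is exactly (\ref{sepcondHneq0}). Applying Theorem~\ref{fuku2} then produces a finite finely continuous $V'$ solving (\ref{vineqH0f}), identifies it as $V'({\bf x})=\sup_\sigma\inf_\tau \tilde J_{\bf x}(\tau,\sigma)=\inf_\tau\sup_\sigma \tilde J_{\bf x}(\tau,\sigma)$ with $\tilde J$ given by (\ref{JH0f}) applied to $\tilde f_i$, and exhibits the saddle pair $\hat\tau=\inf\{t>0:V'({\bf X}_t)=\tilde f_2({\bf X}_t)\}$, $\hat\sigma=\inf\{t>0:V'({\bf X}_t)=-\tilde f_1({\bf X}_t)\}$.

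The third step is to translate the conclusion back to $V$ and $J_{\bf x}$. The central identity I would use is the Dynkin decomposition of the $\alpha$-potential $G_\alpha H$, namely $G_\alpha H({\bf x}) = E_{\bf x}\!\left(\int_0^{\tau\wedge\sigma}e^{-\alpha t}H({\bf X}_t)dt\right) + E_{\bf x}\!\left(e^{-\alpha(\tau\wedge\sigma)}G_\alpha H({\bf X}_{\tau\wedge\sigma})\right)$, which holds at every ${\bf x}\in\mathbb{D}$ thanks to the absolute continuity condition (\ref{abscont}). Substituting the definitions of $\tilde f_i$ into $\tilde J_{\bf x}(\tau,\sigma)$ and applying this decomposition yields the clean relation $J_{\bf x}(\tau,\sigma) = \tilde J_{\bf x}(\tau,\sigma) + G_\alpha H({\bf x})$. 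Since the additive term does not depend on $(\tau,\sigma)$, the minimax structure transfers verbatim: $V({\bf x}) = V'({\bf x})+G_\alpha H({\bf x})$ is the common value, and $(\hat\tau,\hat\sigma)$ remains a saddle pair. The identifications $\hat\tau=\tau_{E_2}$, $\hat\sigma=\tau_{E_1}$ drop out because $V'({\bf X}_t)=\tilde f_i({\bf X}_t)$ and $V({\bf X}_t)=f_i({\bf X}_t)$ cut out the same hitting sets.

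The main obstacle is making every step genuinely pointwise rather than only quasi-everywhere. The transformation $V'=V-G_\alpha H$ and the Dynkin decomposition of $G_\alpha H$ classically hold only outside a properly exceptional set, whereas the conclusion of Theorem~\ref{DgameHneq0} is asserted for every ${\bf x}\in\mathbb{D}$. The absolute continuity (\ref{abscont}) is the essential hypothesis that allows $G_\alpha H$ to be realized pointwise as the Hunt-process resolvent $R_\alpha H$, and the $\alpha$-excessive majorants supplied in the statement control the integrability needed to apply the strong Markov property at arbitrary stopping times. Once these regularity points are settled, the game-theoretic content of the theorem follows immediately from Theorem~\ref{fuku2}.
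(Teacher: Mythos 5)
Your proposal is correct and takes essentially the same route as the paper: the paper's proof of Theorem~\ref{DgameHneq0} is just a citation of Theorem~2.1 in \cite{Fuku02}, and that argument is precisely the reduction you describe --- shifting $f_i$ by $\pm G_\alpha H$, using $\mathcal{E}_\alpha(G_\alpha H,\cdot)=(H,\cdot)$ to pass to the homogeneous variational inequality of Theorem~\ref{fuku2}, and recovering $J_{\bf x}=\tilde J_{\bf x}+G_\alpha H({\bf x})$ from the strong Markov decomposition of the resolvent, with (\ref{abscont}) ensuring everything holds pointwise rather than only quasi-everywhere. Nothing further is needed.
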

$E_1$ is the set of points where $V=-f_1$ and $E_2$ is the set of
points where $V=f_2$. So $\hat{\tau}$ and $\hat{\sigma}$ in Theorem
\ref{DgameHneq0} can be defined in the same way as in Theorem
\ref{fuku2}. The proof of Theorem \ref{DgameHneq0} is identical to
Theorem 2.1 in \cite{Fuku02}.

\section{The Dynkin Game and Its Value Function}\label{DgameFBP}
Two players $P_1$ and $P_2$ observe a multi-dimensional underlying
process ${\bf X}_t$ in (\ref{omodel}) with accumulated income,
discounted at present time, equalling $\int_0^\sigma e^{-\alpha
t}H({\bf X}_t)dt$ for any stopping time $\sigma$. If $P_1$ stops the
game at time $\sigma$, he pays $P_2$ the amount of the accumulated
income plus the amount $f_2({\bf X}_\sigma)$, which after been
discounted equals $e^{-\alpha \sigma}f_2({\bf X}_\sigma)$. If the
process is stopped by $P_2$ at time $\sigma$, he receives from $P_1$
the accumulated income less the amount $f_1({\bf X}_\sigma)$, which
after been discounted equals $e^{-\alpha \sigma}f_1({\bf
X}_\sigma)$. $P_1$ tries to minimize his payment while $P_2$ tries
to maximize his income. Let $\tau,\sigma$ be two stopping times, the
value of this game is thus given by
\begin{equation}\label{vgame}
V({\bf x})=\inf_\tau\sup_\sigma J_{{\bf x}}(\tau,\sigma),\quad
\forall {\bf x}\in {\mathbb{R}}^n,
\end{equation} where $J_{\bf x}$ is given by (\ref{Jcost}) on ${\mathbb{R}}^n$.

For the diffusion (\ref{omodel}), define its infinitesimal generator
$\mathcal{L}$ as
\begin{equation}\label{infgen}
\mathcal{L}:=\sum_{i=1}^n\mu_i\frac{\partial}{\partial
x_i}+\sum_{i,j=1}^n A_{ij}\frac{\partial^2}{\partial x_i\partial
x_j},
\end{equation} where $[A_{ij}]={\bf A}=\frac{1}{2}\sigma
\sigma^T$. We assume that ${\bf A}$ is non-degenerate.

Define the measure ${\mathbb m}(d{\bf x})=\rho({\bf x})d{\bf x}$,
where $\rho({\bf x})$ satisfies the following condition:
\begin{equation}\label{rhocond}
{\mathbb A}\nabla\rho=\rho\cdot(\mu-{\bf b}),
\end{equation} where $b_i=\nabla\cdot{\bf A}_i,i=1,2,...,n$ in ${\bf
b}$. (Notice that when $\mu$ and ${\mathbb A}$ are constants,
$\rho({\bf x})$ reduces to $\rho({\bf x})=\exp(({\mathbb
A}^{-1}\mu)\cdot {\bf x})$.) It can be seen that the absolute
continuity condition (\ref{abscont}) is satisfied.

\begin{remark}
We are unable to solve the case with a general multidimensional
diffusion. Even in the case of one dimensional diffusion, conditions
on $\mu$ and $\sigma$ should be made (see Appendix).
\end{remark}

For the generator $\mathcal{L}$, its associated Dirichlet form
$(\mathcal{E},\mathscr{F})$ densely embedded in
$L^2({\mathbb{R}^n};{\mathbb m})$ is then given by
\begin{equation}\label{NDform}
\mathcal{E}(u,v)=\int_{\mathbb{R}^n}\nabla u({\bf x})\cdot{\bf
A}\nabla v({\bf x}){\mathbb m}(d{\bf x}), \quad u,v\in\mathscr{F},
\end{equation}
where
\begin{displaymath}
\mathscr{F}=\{u\in L^2({\mathbb{R}^n};{\mathbb m}):\ u {\rm\ is\
 continuous},\ \int_{{\mathbb{R}^n}}\nabla u({\bf
x})^T\nabla u({\bf x}){\mathbb m}(d{\bf x})<\infty\}.
\end{displaymath}

For given functions $H,f_1,f_2$ satisfying the conditions of Theorem
\ref{DgameHneq0}, and noticing that ${\bf X}_t$ is a non-degenerate
Ito diffusion,  we can conclude that $V({\bf x})$ in
Eq.(\ref{vgame}) is finite and continuous, and it solves (\ref{vi}).
Furthermore if we let \begin{equation}\label{e12R} E_1=\{{\bf x}\in
{\mathbb{R}}^n: V({\bf x})=-f_1({\bf x})\}, \quad E_2=\{{\bf x}\in
{\mathbb{R}}^n: V({\bf x})=f_2({\bf x})\},
\end{equation} then the hitting times $\hat{\tau}=\tau_{E_2}$,
$\hat{\sigma}=\tau_{E_1}$ is the saddle point of the game
\begin{equation}\label{saddle_Cont}
J_{\bf x}(\hat{\tau},\sigma)\leqslant J_{\bf
x}(\hat{\tau},\hat{\sigma})=V({\bf x})\leqslant J_{\bf
x}(\tau,\hat{\sigma})
\end{equation} for any ${\bf x}\in {\mathbb{R}}^n$ and any stopping times
$\tau,\sigma$.

In the next section we shall give conditions on $H,f_1,f_2$ and
characterize the regularities of $V({\bf x})$ and the form of the
optimal control policy.

\subsection{Optimal Stopping Regions} In the one dimensional case,
if the functions are defined over a bounded interval, a lot of
properties are automatically satisfied \cite{Fuku02}. But in
multi-dimensional case, this is much harder.

It is obvious that the conditions on $H,f_1,f_2$ are critical on the
form of optimal control policy. For example, if $H\equiv 0$ and
$-f_1({\bf x})<0<f_2({\bf x}),\ \forall {\bf x}$, then no party
would ever stop the game and there is no optimal control.

\begin{assumption}\label{assumpHf}
$f_1,f_2\in\mathscr{F}$ are smooth functions, $-M<-f_1({\bf
x})<0<f_2({\bf x})<M,\ \forall {\bf x}\in\mathbb{R}^n$ where $M$ is
a constant, and $H\in L^2(\mathbb{R}^n;\mathbb{m})$ is everywhere
continuous, and the separability condition (\ref{sepcondHneq0})
holds. $H(\bar{\bf x},x_n)$ is strictly increasing in $x_n$,
$f_1(\bar{\bf x},x_n)$ is nondecreasing in $x_n$, $f_2(\bar{\bf
x},x_n)$ is nonincreasing in $x_n$. Further more,
$(\alpha-\mathcal{L})f_1(\bar{\bf x},x_n)+H(\bar{\bf x},x_n)$ is
strictly increasing in $x_n$ and $(\alpha-\mathcal{L})f_2(\bar{\bf
x},x_n)-H(\bar{\bf x},x_n)$ is strictly decreasing in $x_n$. The
(hyper)curves $a(\bar{\bf x})$, $b(\bar{\bf x})$ such that
\begin{eqnarray*}
(\alpha-\mathcal{L})f_1(\bar{\bf x},a(\bar{\bf x}))+H(\bar{\bf
x},a(\bar{\bf x}))&=&0,\\
(\alpha-\mathcal{L})f_2(\bar{\bf x},b(\bar{\bf x}))-H(\bar{\bf
x},b(\bar{\bf x}))&=&0,
\end{eqnarray*} with $a(\bar{\bf x})<b(\bar{\bf
x})$, $\forall \bar{\bf x}\in{\mathbb{R}}^{n-1}$, are assumed to be
bounded and uniformly Lipschitz continuous.
\end{assumption}

 Then it is easy to see that
\begin{proposition}\label{f1f2Hsign} Assume Assumption
\ref{assumpHf}. For any $(\bar{\bf x},x_n)$ with $x_n<a(\bar{\bf
x})$,
\begin{displaymath}
(\alpha-\mathcal{L})f_1(\bar{\bf x},x_n)+H(\bar{\bf x},x_n)<0,
\end{displaymath} and for any $(\bar{\bf x},x_n)$ with $x_n>a(\bar{\bf
x})$,
\begin{displaymath}
(\alpha-\mathcal{L})f_1(\bar{\bf x},x_n)+H(\bar{\bf x},x_n)>0.
\end{displaymath}
Similarly, for any $(\bar{\bf x},x_n)$ with $x_n<b(\bar{\bf x})$,
\begin{displaymath}
(\alpha-\mathcal{L})f_2(\bar{\bf x},x_n)-H(\bar{\bf x},x_n)>0,
\end{displaymath} and for any $(\bar{\bf x},x_n)$ with $x_n>b(\bar{\bf
x})$,
\begin{displaymath}
(\alpha-\mathcal{L})f_2(\bar{\bf x},x_n)-H(\bar{\bf x},x_n)<0.
\end{displaymath}
\end{proposition}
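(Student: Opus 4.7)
The proposition is essentially an immediate consequence of the strict monotonicity assumptions together with the fact that $a(\bar{\bf x})$ and $b(\bar{\bf x})$ are defined as the zeros of the relevant expressions, so my plan is simply to unpack these facts.

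The plan is to fix an arbitrary $\bar{\bf x}\in\mathbb{R}^{n-1}$ and to treat each of the two inequalities separately as a one-variable monotonicity statement. For the $f_1$ part, I would introduce the auxiliary function
\begin{displaymath}
\Phi_1(x_n) := (\alpha-\mathcal{L})f_1(\bar{\bf x},x_n) + H(\bar{\bf x},x_n),
\end{displaymath}
which, by Assumption \ref{assumpHf}, is strictly increasing in $x_n$. By the defining equation $(\alpha-\mathcal{L})f_1(\bar{\bf x},a(\bar{\bf x})) + H(\bar{\bf x},a(\bar{\bf x})) = 0$, we have $\Phi_1(a(\bar{\bf x})) = 0$. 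Strict monotonicity therefore forces $\Phi_1(x_n) < 0$ whenever $x_n < a(\bar{\bf x})$ and $\Phi_1(x_n) > 0$ whenever $x_n > a(\bar{\bf x})$, which is exactly the first pair of inequalities.

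For the $f_2$ part the argument is entirely analogous but with the monotonicity reversed. I would set
\begin{displaymath}
\Phi_2(x_n) := (\alpha-\mathcal{L})f_2(\bar{\bf x},x_n) - H(\bar{\bf x},x_n),
\end{displaymath}
which is strictly decreasing in $x_n$ by Assumption \ref{assumpHf}, and which vanishes at $x_n = b(\bar{\bf x})$ by the defining equation for $b(\bar{\bf x})$. Strict decrease then gives $\Phi_2(x_n) > 0$ for $x_n < b(\bar{\bf x})$ and $\Phi_2(x_n) < 0$ for $x_n > b(\bar{\bf x})$, which is the second pair of inequalities.

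There is no real obstacle here; the entire content of the proposition is built into the hypotheses. The only tacit point worth mentioning is that strict monotonicity of $\Phi_1$ and $\Phi_2$ in $x_n$ guarantees that the zeros $a(\bar{\bf x})$ and $b(\bar{\bf x})$ are uniquely determined (so the notation $a(\bar{\bf x}), b(\bar{\bf x})$ is well-defined pointwise in $\bar{\bf x}$), and that the strict signs to the left and right of these zeros are genuinely strict. Since these are immediate from the assumption, the proof should fit in a few lines.
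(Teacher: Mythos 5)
Your proof is correct and is exactly the argument the paper intends: the paper gives no proof at all (it merely prefaces the proposition with ``it is easy to see that''), and the content is precisely the strict monotonicity of $(\alpha-\mathcal{L})f_1+H$ and $(\alpha-\mathcal{L})f_2-H$ in $x_n$ combined with the defining equations for $a(\bar{\bf x})$ and $b(\bar{\bf x})$, as you state.
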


Define the set
\begin{equation}\label{cregion}
E=\{{\bf x}\in{\mathbb{R}}^n:-f_1({\bf x})<V({\bf x})<f_2({\bf
x})\}.
\end{equation}Since $P_2$ would stop the game once
$V({\bf x})\leqslant -f_1({\bf x})$ and the instant payoff is
$-f_1({\bf x})$, while $P_1$ would stop the game once $V({\bf
x})\geqslant f_2({\bf x})$ and the instant payoff is $f_2({\bf x})$,
we could write ${\mathbb{R}}^n$ as a partition:
\begin{displaymath}
{\mathbb{R}}^n=E_1\cup E\cup E_2,
\end{displaymath} where $E_1,E_2$ were given in (\ref{e12R}).

\begin{proposition}\label{Jandf} Assume Assumption
\ref{assumpHf}.
 For each ${\bf x}\in E_1$,
\begin{displaymath}
(\alpha-\mathcal{L})f_1({\bf x})+H({\bf x})\leqslant0,
\end{displaymath}
and for each ${\bf x}\in E_2$,
\begin{displaymath}(\alpha-\mathcal{L})f_2({\bf
x})-H({\bf x})\leqslant 0.
\end{displaymath}
\end{proposition}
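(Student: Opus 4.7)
The plan is to deduce Proposition \ref{Jandf} from the variational inequality (\ref{vi}) by means of a localized test-function perturbation, and then to propagate the resulting pointwise inequality from the interior of $E_1$ (resp.\ $E_2$) to all of $E_1$ (resp.\ $E_2$) by continuity.

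First I would fix ${\bf x}_0\in{\rm int}(E_1)$ and pick an open ball $B$ with $\overline{B}\subset{\rm int}(E_1)$, so that $V\equiv-f_1$ on $B$. For any nonnegative $\phi\in C_c^\infty(B)$ set $u:=V+\varepsilon\phi$. Trivially $u\geqslant V\geqslant-f_1$; and because Assumption \ref{assumpHf} enforces $-f_1<0<f_2$ pointwise on ${\mathbb R}^n$, the continuous function $f_2-V=f_2+f_1$ admits a positive minimum on $\overline{B}$, so $u\leqslant f_2$ on $B$ for every sufficiently small $\varepsilon>0$. Therefore $u\in K$, and inserting it into (\ref{vi}) gives
\begin{displaymath}
\mathcal{E}_\alpha(V,\phi)\geqslant(H,\phi).
\end{displaymath}

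Next, since $V=-f_1$ on $B\supset{\rm supp}\,\phi$ and $f_1$ is smooth, the two functions share identical weak derivatives on $B$, so $\mathcal{E}_\alpha(V,\phi)=\mathcal{E}_\alpha(-f_1,\phi)$. A standard integration by parts using the defining identity (\ref{rhocond}) for $\rho$ gives, for every smooth $g$ and $\phi\in C_c^\infty$, the identity $\mathcal{E}_\alpha(g,\phi)=\int((\alpha-\mathcal{L})g)\phi\,d{\mathbb m}$ with $\mathcal{L}$ as in (\ref{infgen}). Combining these facts,
\begin{displaymath}
\int\bigl[(\alpha-\mathcal{L})(-f_1)-H\bigr]\,\phi\,d{\mathbb m}\;\geqslant\;0
\end{displaymath}
for all nonnegative $\phi\in C_c^\infty(B)$. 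Arbitrariness of $\phi$, together with continuity of $(\alpha-\mathcal{L})f_1+H$ (which follows from smoothness of $f_1$ and continuity of $\mu$, $\sigma$, $H$), then yields $(\alpha-\mathcal{L})f_1+H\leqslant 0$ pointwise on $B$, and hence throughout ${\rm int}(E_1)$.

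Finally I would extend the bound from ${\rm int}(E_1)$ to all of $E_1$. Since $V,f_1$ are continuous, $E_1$ is closed, and the continuous quantity $(\alpha-\mathcal{L})f_1+H$ is therefore $\leqslant 0$ on $\overline{{\rm int}(E_1)}$. The strict monotonicity in $x_n$ imposed by Assumption \ref{assumpHf} rules out lower-dimensional ``spikes'' of $E_1$ isolated from its interior, so $E_1=\overline{{\rm int}(E_1)}$ and the pointwise inequality persists on $E_1$. The claim on $E_2$ follows from the same scheme applied to $u:=V-\varepsilon\phi\in K$, the room for this downward perturbation being provided by $V=f_2>0>-f_1$ on $E_2$.

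The main technical obstacle is the last extension step, i.e.\ ensuring $E_1$ equals the closure of its interior, so that the pointwise bound actually transfers to the full coincidence set. This is exactly where Assumption \ref{assumpHf} (particularly the monotonicity in $x_n$) is invoked. If one instead interprets Proposition \ref{Jandf} only in an ${\mathbb m}$-a.e.\ sense, the first two steps already suffice and no geometric control of $E_1$ is required.
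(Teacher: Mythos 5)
Your localized test-function argument is sound as far as it goes, and it is genuinely different from the paper's proof: the paper argues probabilistically, supposing $(\alpha-\mathcal{L})f_1({\bf x})+H({\bf x})>0$ at some ${\bf x}\in E_1$, taking a small ball on which this persists, and applying Dynkin's formula to the strategy ``stop at the exit time of the ball'' to show $P_2$ could then beat the payoff $-f_1({\bf x})$, contradicting $V({\bf x})=-f_1({\bf x})$. Your steps through the variational inequality (\ref{vi}) — the admissibility of $u=V+\varepsilon\phi$ for small $\varepsilon$ (using $f_2+f_1>0$ on $\overline{B}$), the locality of $\mathcal{E}$ giving $\mathcal{E}_\alpha(V,\phi)=\mathcal{E}_\alpha(-f_1,\phi)$ on $B\subset{\rm int}(E_1)$, and the integration by parts via (\ref{rhocond}) — are all correct and yield the inequality pointwise on ${\rm int}(E_1)$.

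The gap is exactly where you flag it, and it is not repaired by the remark you offer. The claim $E_1=\overline{{\rm int}(E_1)}$ does not follow from Assumption \ref{assumpHf}: the monotonicity hypotheses there constrain the data $H$, $f_1$, $f_2$ and $(\alpha-\mathcal{L})f_i\mp H$ in $x_n$, but say nothing at this stage about the geometry of the coincidence set $\{V=-f_1\}$, which is defined through the as-yet-uncharacterized $V$. Nothing you have established excludes a point of $E_1$ lying above the curve $a(\bar{\bf x})$ that is isolated from ${\rm int}(E_1)$ (or a lower-dimensional piece of $E_1$ with empty interior), and for such a point your perturbation argument gives no information, since you cannot identify the distributional $(\alpha-\mathcal{L})V$ with $(\alpha-\mathcal{L})(-f_1)$ without a full neighborhood on which $V=-f_1$ (and $V$ is only known to be in $\mathscr{F}$, not $H^2$, so an a.e.\ second-derivative identification on the coincidence set is also unavailable). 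Moreover the ${\mathbb m}$-a.e.\ fallback you propose is not enough for the paper's purposes: the Corollary immediately following uses the inequality at \emph{every} point of $E_1$ to conclude $E_1\subseteq{\mathbb R}^{n-1}\times(-\infty,a]$, and the later structural results about $E_1$ (which might otherwise justify $E_1=\overline{{\rm int}(E_1)}$) are derived \emph{from} this proposition, so invoking them here would be circular. The cleanest fix is to handle an arbitrary ${\bf x}\in E_1$ directly, as the paper does, with the Dynkin-formula/exit-time argument, which needs no interior structure of $E_1$.
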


\begin{proof}
We only give proof to the first half. We know at the point ${\bf
x}\in E_1$ it must be true that $V({\bf x})\leqslant-f_1({\bf x})$,
and it is optimal for $P_2$ to stop the game immediately. Suppose
\begin{displaymath}
(\alpha-\mathcal{L})f_1({\bf x})+H({\bf x})>0,\ {\bf x}\in E_1,
\end{displaymath} then by the smoothness of $f_1$ and the continuity
of $H$, we can find a small ball $B_r({\bf x})$ containing the point
${\bf x}$, such that for each ${\bf y}\in B_r({\bf x})$,
\begin{displaymath}
(\alpha-\mathcal{L})f_1({\bf y})+H({\bf y})>0.
\end{displaymath}
Consider a policy for $P_2$ to stop the game at the first exit time
of $B_r({\bf x})$, denoted $\tau_{B_r}$. Then by Dynkin's formula,
the payoff would be
\begin{eqnarray*}
J_{{\bf x}}&=&E_{{\bf x}}\int_0^{\tau_{B_r}}e^{-\alpha t}H({\bf
X}_t)dt+E_{{\bf
x}}(e^{-\alpha \tau_{B_r}}(-f_1({\bf X}_{\tau_{B_r}})))\\
&=&E_{{\bf x}}\int_0^{\tau_{B_r}}e^{-\alpha t}H({\bf
X}_t)dt-f_1({\bf x})+E_{{\bf x}}\int_0^{\tau_{B_r}}e^{-\alpha
t}(\alpha-\mathcal{L})f_1({\bf X}_t)dt\\
&=&-f_1({\bf x})+E_{{\bf x}}\int_0^{\tau_{B_r}}e^{-\alpha
t}[(\alpha-\mathcal{L})f_1({\bf
X}_t)+H({\bf X}_t)]dt\\
&>&-f_1({\bf x}).
\end{eqnarray*} This is a contradiction since $P_2$ tries to
maximize his payoff but we assumed that the optimal policy at ${\bf
x}$ was to stop the game immediately.
\end{proof}

\begin{corollary} Assume Assumption
\ref{assumpHf}. If ${\bf x}=(\bar{\bf x},x_n)\in E_1$, then for any
point $(\bar{\bf x},y)$ with $y< x_n$,
\begin{displaymath}
(\alpha-\mathcal{L})f_1(\bar{\bf x},y)+H(\bar{\bf x},y)<0.
\end{displaymath}
If ${\bf x}=(\bar{\bf x},x_n)\in E_2$, then for any point $(\bar{\bf
x},y)$ with $y> x_n$,
\begin{displaymath}
(\alpha-\mathcal{L})f_2(\bar{\bf x},y)-H(\bar{\bf x},y)<0.
\end{displaymath} Furthermore,
$E_1\subseteq{\mathbb R}^{n-1}\times(-\infty,a]$ and
$E_2\subseteq{\mathbb R}^{n-1}\times[b,\infty)$.
\end{corollary}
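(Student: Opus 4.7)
The plan is to chain together the monotonicity structure built into Assumption \ref{assumpHf} with Propositions \ref{f1f2Hsign} and \ref{Jandf}; once those facts are placed side by side, each inequality in the statement drops out in one line.

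For the first pointwise assertion, I would fix ${\bf x}=(\bar{\bf x},x_n)\in E_1$ and invoke Proposition \ref{Jandf} to obtain
\[
(\alpha-\mathcal{L})f_1(\bar{\bf x},x_n)+H(\bar{\bf x},x_n)\leqslant 0.
\]
By Assumption \ref{assumpHf} the map $t\mapsto (\alpha-\mathcal{L})f_1(\bar{\bf x},t)+H(\bar{\bf x},t)$ is \emph{strictly} increasing in $t$, so for any $y<x_n$ its value at $t=y$ is strictly smaller than the bound above, hence strictly negative. The assertion on $E_2$ is the symmetric statement: Proposition \ref{Jandf} bounds $(\alpha-\mathcal{L})f_2-H$ above by zero at ${\bf x}\in E_2$, and the strict decrease of this quantity in the $n$-th coordinate (again from Assumption \ref{assumpHf}) pushes the value further below zero at any $y>x_n$.

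For the inclusions $E_1\subseteq {\mathbb R}^{n-1}\times(-\infty,a]$ and $E_2\subseteq {\mathbb R}^{n-1}\times[b,\infty)$, I would argue by contradiction using Proposition \ref{f1f2Hsign}. If $(\bar{\bf x},x_n)\in E_1$ but $x_n>a(\bar{\bf x})$, that proposition yields
\[
(\alpha-\mathcal{L})f_1(\bar{\bf x},x_n)+H(\bar{\bf x},x_n)>0,
\]
which directly contradicts the bound $\leqslant 0$ coming from Proposition \ref{Jandf}. Hence $x_n\leqslant a(\bar{\bf x})$. The same reasoning, with the curve $b(\bar{\bf x})$ in place of $a(\bar{\bf x})$ and the $f_2$-expression in place of the $f_1$-expression, gives the inclusion for $E_2$.

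There is really no technical obstacle here: the corollary is a bookkeeping consequence of the hypotheses on $H,f_1,f_2$ and the two preceding propositions. The only thing I would double-check is that the \emph{strict} monotonicity in the $n$-th variable, rather than merely a weak one, is what upgrades the weak inequality $\leqslant 0$ from Proposition \ref{Jandf} to the strict $<0$ claimed in the corollary, which is automatic as soon as $y\ne x_n$.
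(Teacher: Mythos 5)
Your proposal is correct and follows the same route as the paper, which simply cites Proposition \ref{Jandf} together with the monotonicity conditions of Assumption \ref{assumpHf}; you have merely spelled out the details the paper leaves implicit. The use of Proposition \ref{f1f2Hsign} for the inclusions is consistent with this, since that proposition is itself just a restatement of the strict monotonicity and the defining property of the curves $a$ and $b$.
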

\begin{proof}
This can be easily seen from Proposition \ref{Jandf} and the
conditions on $f_1,f_2, H$ given in Assumption
\ref{assumpHf}.\end{proof}

Further, noticing the conditions on the curves $a({\bar{\bf x}})$
and $b({\bar{\bf x}})$, $\bar{\bf x}\in{\mathbb R}^{n-1}$, we have
the following:
\begin{corollary} Assume Assumption
\ref{assumpHf}. $E\supseteq {\mathbb R}^{n-1}\times(a,b)$ and hence
$E$ is not empty. Furthermore, the value of this game $V$ is bounded
by $M$, where $M$ is given in Assumption \ref{assumpHf}.
\end{corollary}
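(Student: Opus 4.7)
The plan is to combine the previous corollary with the definition of the continuation region $E$ and then use the $\alpha$-potential bounds on $f_1,f_2$ from Assumption~\ref{assumpHf}.

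First I would invoke the preceding corollary: $E_1\subseteq\{(\bar{\bf x},x_n):x_n\leqslant a(\bar{\bf x})\}$ and $E_2\subseteq\{(\bar{\bf x},x_n):x_n\geqslant b(\bar{\bf x})\}$. Since ${\mathbb R}^n = E_1\cup E\cup E_2$ is a partition, by taking complements we obtain
\begin{displaymath}
E \;\supseteq\; {\mathbb R}^n\setminus\bigl(\{x_n\leqslant a(\bar{\bf x})\}\cup\{x_n\geqslant b(\bar{\bf x})\}\bigr) \;=\;\{(\bar{\bf x},x_n):a(\bar{\bf x})<x_n<b(\bar{\bf x})\}.
\end{displaymath}
The inequality $a(\bar{\bf x})<b(\bar{\bf x})$ assumed in Assumption~\ref{assumpHf} guarantees that the right hand side is non-empty for every $\bar{\bf x}\in{\mathbb R}^{n-1}$, so in particular $E\neq\emptyset$. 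Writing the inclusion compactly as $E\supseteq {\mathbb R}^{n-1}\times(a,b)$, with the abuse of notation consistent with the statement, gives the first claim.

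For the boundedness of $V$ I would argue pointwise on the three pieces of the partition. On $E_1$ we have $V({\bf x})=-f_1({\bf x})$, so $|V({\bf x})|=|f_1({\bf x})|<M$ by Assumption~\ref{assumpHf}. Symmetrically, on $E_2$ we have $V({\bf x})=f_2({\bf x})$ with $|f_2({\bf x})|<M$. On the continuation region $E$ the variational inequality (and equivalently the saddle point characterization in \eqref{saddle_Cont}) gives $-f_1({\bf x})\leqslant V({\bf x})\leqslant f_2({\bf x})$, which combined with the bounds $-M<-f_1({\bf x})$ and $f_2({\bf x})<M$ yields $|V({\bf x})|<M$. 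Hence $\sup_{{\bf x}\in{\mathbb R}^n}|V({\bf x})|\leqslant M$.

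There is no genuine obstacle in this corollary: it is a bookkeeping consequence of the previous corollary together with the partition $\mathbb{R}^n=E_1\cup E\cup E_2$ and the uniform bound $\max(|f_1|,|f_2|)<M$ from Assumption~\ref{assumpHf}. The only point demanding mild care is being explicit that $a$ and $b$ are functions of $\bar{\bf x}$ so that the inclusions inherited from the previous corollary are interpreted coordinatewise; once that is done, the strict inequality $a(\bar{\bf x})<b(\bar{\bf x})$ does the rest.
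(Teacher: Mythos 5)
Your argument is correct and is exactly the bookkeeping the paper leaves implicit: the paper states this corollary without a written proof, asserting it follows from the preceding corollary and the condition $a(\bar{\bf x})<b(\bar{\bf x})$, which is precisely your complementation of the partition ${\mathbb R}^n=E_1\cup E\cup E_2$ together with the bound $-M<-f_1\leqslant V\leqslant f_2<M$. No gaps.
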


Take any point ${\bf x}=({\bar{\bf x}},x_n)\in E_1$, and denote
$\sigma_a$ the hitting time to the curve $a(\cdot)$. Notice that the
diffusion (\ref{omodel}) is a  conservative process by the given
conditions, and also by noticing the conditions given on $a(\cdot)$,
it can be concluded that $E_{\bf x}(e^{-\alpha \sigma_a})$ goes to
zero as $x_n$ goes to $-\infty$. Similarly $E_{\bf x}(e^{-\alpha
\sigma_b})$, ${\bf x}=({\bar{\bf x}},x_n)\in E_2$, goes to zero as
$x_n$ goes to $\infty$.

\begin{assumption}\label{assumAB}
There exist functions $A(\bar{\bf x}), B(\bar{\bf x})$, $\bar{\bf
x}\in{\mathbb{R}}^{n-1}$ that are uniformly bounded and such that
for any ${\bf x}\in{\mathbb R}^{n-1}\times(-\infty,A]$,
\begin{displaymath}
E_{\bf x}\left(\int_0^{\sigma_a} e^{-\alpha t}H({\bf
X}_t)dt+e^{-\alpha\sigma_a}M\right)<-f_1({\bf x}),
\end{displaymath} and for any ${\bf x}\in{\mathbb
R}^{n-1}\times[B,\infty)$,
\begin{displaymath}
E_{\bf x}\left(\int_0^{\sigma_b} e^{-\alpha t}H({\bf
X}_t)dt-e^{-\alpha\sigma_b}M\right)>f_2({\bf x}).
\end{displaymath}
\end{assumption}

\begin{proposition} Assume Assumptions \ref{assumpHf} and
\ref{assumAB}, then $A(\bar{\bf x})<a(\bar{\bf x})$ and $B(\bar{\bf
x})>b(\bar{\bf x})$, $\forall\bar{\bf x}\in{\mathbb{R}}^{n-1}$.
Furthermore, on ${\mathbb R}^{n-1}\times(-\infty,A]$ player $P_2$
would stop the game immediately and $V=-f_1$, and on ${\mathbb
R}^{n-1}\times[B,\infty)$ player $P_1$ would stop the game
immediately and $V=f_2$.
\end{proposition}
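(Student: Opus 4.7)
The plan is to prove the two claims sequentially.

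\emph{Part 1 ($A<a$ and $B>b$).} I argue by contradiction. Suppose $A(\bar{\mathbf x}_0)\geq a(\bar{\mathbf x}_0)$ for some $\bar{\mathbf x}_0\in\mathbb{R}^{n-1}$; then $\mathbf x_0:=(\bar{\mathbf x}_0,a(\bar{\mathbf x}_0))$ belongs to $\mathbb{R}^{n-1}\times(-\infty,A]$ and has $\sigma_a=0$ almost surely under $P_{\mathbf x_0}$. Evaluating the inequality in Assumption~\ref{assumAB} at $\mathbf x_0$ collapses its left-hand side to $M$, giving $M<-f_1(\mathbf x_0)$, which contradicts $-f_1(\mathbf x_0)<0<M$ from Assumption~\ref{assumpHf}. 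An analogous evaluation at $(\bar{\mathbf x},b(\bar{\mathbf x}))$ yields $B>b$.

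\emph{Part 2 ($V=-f_1$ on $\mathbb{R}^{n-1}\times(-\infty,A]$).} Since $V\geq -f_1$ is automatic from $V\in K$, it suffices to prove $V(\mathbf x)\leq -f_1(\mathbf x)$. Fix $\mathbf x=(\bar{\mathbf x},x_n)$ with $x_n\leq A(\bar{\mathbf x})$. By Part 1 together with the preceding Corollary, $E_2\subseteq\mathbb{R}^{n-1}\times[b,\infty)$ while $\mathbf x$ lies strictly below $a<b$, so the saddle-point stopping time $\hat\tau=\tau_{E_2}$ from Theorem~\ref{DgameHneq0} satisfies $\hat\tau\geq\sigma_a$ almost surely. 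Setting $T^*=\hat\tau\wedge\hat\sigma$ and $T=T^*\wedge\sigma_a$, the strong Markov property applied to the saddle-point representation yields the dynamic programming identity
\[
V(\mathbf x)=E_{\mathbf x}\!\left[\int_0^{T}e^{-\alpha t}H(\mathbf X_t)\,dt+e^{-\alpha T}V(\mathbf X_T)\right],
\]
where on $\{T^*\leq\sigma_a\}$ the terminal payoff is $-f_1(\mathbf X_{\hat\sigma})$ (the still-below-$a$ path cannot have entered $E_2$) and on $\{T^*>\sigma_a\}$ the continuation value $V(\mathbf X_{\sigma_a})$ is bounded above by $M$ from the earlier Corollary.

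I then apply Dynkin's formula to $e^{-\alpha t}f_1(\mathbf X_t)$ stopped at $T\leq\sigma_a$. Since $(\alpha-\mathcal L)f_1+H<0$ on the sub-$a$ portion of the trajectory by Proposition~\ref{f1f2Hsign}, this yields the auxiliary estimate $E_{\mathbf x}[\int_0^{T}e^{-\alpha t}H\,dt-e^{-\alpha T}f_1(\mathbf X_T)]\leq -f_1(\mathbf x)$. Subtracting this from the DP identity, the contributions on $\{T^*\leq\sigma_a\}$ telescope to zero, leaving a residue $E_{\mathbf x}[\mathbf 1_{\{T^*>\sigma_a\}}e^{-\alpha\sigma_a}(V+f_1)(\mathbf X_{\sigma_a})]$ whose non-positivity is then forced by Assumption~\ref{assumAB}. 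A symmetric construction with $f_2,b,B$ in place of $f_1,a,A$ establishes $V=f_2$ on $\mathbb{R}^{n-1}\times[B,\infty)$.

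The principal obstacle is precisely the residue-control step. The naive estimate $V+f_1\leq 2M$ combined with the bound $E_{\mathbf x}[e^{-\alpha\sigma_a}]<M^{-1}(-f_1(\mathbf x)-E_{\mathbf x}[\int_0^{\sigma_a}H\,dt])$ extracted from Assumption~\ref{assumAB} is too loose because $H$ has no definite sign, so one cannot directly compare $E_{\mathbf x}[\int_0^{T}e^{-\alpha t}H\,dt+e^{-\alpha T}M]$ with the Assumption's expression at $\sigma_a$ by monotonicity in the stopping time alone. The resolution is a contradiction argument: were $V(\mathbf x)>-f_1(\mathbf x)$, the saddle-point strategies would have $P_2$ continuing past $\sigma_a$ with positive probability, yet the Assumption shows that no such waiting strategy can attain a payoff as large as $-f_1(\mathbf x)$, thereby contradicting the optimality of the saddle-point value $V(\mathbf x)=J_{\mathbf x}(\hat\tau,\hat\sigma)$.
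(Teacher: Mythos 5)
Your Part~1 is correct and in fact shorter than the paper's: if $A(\bar{\bf x}_0)\geqslant a(\bar{\bf x}_0)$, the point $(\bar{\bf x}_0,a(\bar{\bf x}_0))$ lies in ${\mathbb R}^{n-1}\times(-\infty,A]$ and has $\sigma_a=0$ a.s.\ (every point of the Lipschitz curve $a$ is regular for the non-degenerate diffusion), so Assumption~\ref{assumAB} collapses to $M<-f_1<0$, contradicting $M>0$. The paper instead applies Dynkin's formula to $f_1$ on $[0,\sigma_a\wedge T]$ from an arbitrary point of the overlap $\{a\leqslant x_n\leqslant A\}$ and uses Proposition~\ref{f1f2Hsign}; both arguments are sound.

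Part~2 has a genuine gap, located exactly where you flag it. Subtracting your auxiliary estimate from the dynamic-programming identity can only produce
\begin{displaymath}
V({\bf x})+f_1({\bf x})\ \geqslant\ E_{\bf x}\Bigl[I_{\{T^*>\sigma_a\}}\,e^{-\alpha\sigma_a}\,(V+f_1)({\bf X}_{\sigma_a})\Bigr]\ \geqslant\ 0,
\end{displaymath}
because you are subtracting an upper bound from an identity: the residue is automatically \emph{nonnegative} (strictly positive wherever the curve $a$ meets the continuation region, since $V>-f_1$ there), so no hypothesis can force it to be nonpositive, and in any case this chain only re-derives $V\geqslant-f_1$, which you already had from $V\in K$. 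The needed direction $V\leqslant-f_1$ must come from an upper bound on $V$, i.e.\ from $V({\bf x})=\inf_\tau\sup_\sigma J_{\bf x}(\tau,\sigma)\leqslant\sup_\sigma J_{\bf x}(\hat\tau,\sigma)$ together with an estimate of $J_{\bf x}(\hat\tau,\sigma)$ for an \emph{arbitrary} $\sigma$. That is what the paper does: it splits at $\sigma_a$, controls the $\{\sigma\leqslant\sigma_a\}$ contribution by Dynkin's formula and the sign of $(\alpha-\mathcal{L})f_1+H$ below $a$, bounds the continuation value at $\sigma_a$ by $M$, and invokes Assumption~\ref{assumAB} on the $\{\sigma>\sigma_a\}$ piece. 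Your closing ``resolution'' simply asserts that ``the Assumption shows that no such waiting strategy can attain a payoff as large as $-f_1({\bf x})$'' --- but that assertion is the entire content of the claim, and you have just (correctly) observed that it does not follow from Assumption~\ref{assumAB} by direct comparison of stopping times. As written, the proposal names the two ingredients but never combines them into the inequality $\sup_\sigma J_{\bf x}(\hat\tau,\sigma)\leqslant-f_1({\bf x})$, so the second half of the proposition is not proved.
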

\begin{proof}
Suppose there is a point ${\bf x}=(\bar{\bf x},x_n)$ with
$a(\bar{\bf x})\leq x_n\leq A(\bar{\bf x})$. Then by Dynkin's
formula,
\begin{eqnarray*}
&&E_{\bf x}\left(\int_0^{\sigma_a\wedge T} e^{-\alpha t}H({\bf
X}_t)dt+e^{-\alpha(\sigma_a\wedge T)}M\right)\\
&& > E_{\bf x}\left(\int_0^{\sigma_a\wedge T} e^{-\alpha t}H({\bf
X}_t)dt+e^{-\alpha(\sigma_a\wedge T)}(-f_1({\bf X}_{\sigma_a\wedge
T}))\right)\\
&&=-f_1({\bf x})+E_{\bf x}\left(\int_0^{\sigma_a\wedge T}e^{-\alpha
t}((\alpha-\mathcal{L})f_1({\bf X}_t)+H({\bf
X}_t))dt\right)>-f_1({\bf x})
\end{eqnarray*} by Proposition \ref{f1f2Hsign}. Taking $T\to\infty$
we get a contradiction.

Now suppose ${\bf x}\in{\mathbb R}^{n-1}\times(-\infty,A]$. For any
stopping time $\sigma$ for player $P_2$  the payoff  will be
\begin{eqnarray*}
&&E_{\bf x}\int_0^{\sigma}e^{-\alpha t}H({\bf X}_t)dt+E_{\bf
x}[e^{-\alpha \sigma}(-f_1({\bf X}_{\sigma}))]\\
&&=\left(E_{\bf x}\int_0^{\sigma}e^{-\alpha t}H({\bf X}_t)dt+E_{\bf
x}[e^{-\alpha \sigma}(-f_1({\bf X}_{\sigma}))]\right)P_{\bf
x}(\sigma\leq \sigma_a)\\
&&+\left(E_{\bf x}\int_0^{\sigma}e^{-\alpha t}H({\bf X}_t)dt+E_{\bf
x}[e^{-\alpha \sigma}(-f_1({\bf X}_{\sigma}))]\right)P_{\bf
x}(\sigma>\sigma_a).
\end{eqnarray*}
When $\sigma\leq\sigma_a$, the following quantity
\begin{eqnarray*}
&&=E_{\bf x}\int_0^{\sigma\wedge T\wedge\sigma_a}e^{-\alpha t}H({\bf
X}_t)dt+E_{\bf
x}[e^{-\alpha (\sigma\wedge T\wedge \sigma_a)}(-f_1({\bf X}_{\sigma\wedge T\wedge\sigma_a}))]\\
&&=-f_1({\bf x})+E_{\bf x}\int_0^{\sigma\wedge
T\wedge\sigma_a}e^{-\alpha t}((\alpha-\mathcal{L})f_1({\bf
X}_t)+H({\bf X}_t))dt
\end{eqnarray*} is less than $-f_1({\bf x})$, $\forall T>0$ by Proposition \ref{f1f2Hsign}.

\noindent When $\sigma>\sigma_a$,
\begin{eqnarray*}
 &&E_{\bf x}\int_0^{\sigma}e^{-\alpha t}H({\bf X}_t)dt+E_{\bf x}[e^{-\alpha \sigma}(-f_1({\bf
X}_{\sigma}))]\\
&&\leq E_{\bf x}\left(\int_0^{\sigma_a}e^{-\alpha t}H({\bf
X}_t)dt+e^{-\alpha\sigma_a}M\right)
\end{eqnarray*} because $M$ is the bound of the payoff of each
player. Hence by Assumption \ref{assumAB},
\begin{displaymath}
E_{\bf x}\int_0^{\sigma}e^{-\alpha t}H({\bf X}_t)dt+E_{\bf
x}[e^{-\alpha \sigma}(-f_1({\bf X}_{\sigma}))]<-f_1({\bf x}).
\end{displaymath}
As a summary, if ${\bf x}\in{\mathbb R}^{n-1}\times(-\infty,A]$,
then for any stopping policy of $P_2$, the expected payoff is less
than $-f_1({\bf x})$, and the optimal strategy is to stop the game
immediately. The other half of this proposition can be proved in a
similar way.
\end{proof}

By the properties of $A(\cdot)$ (or $B(\cdot)$), we can choose a
bounded and continuous curve below $A(\cdot)$ (or a bounded and
 continuous curve above $B(\cdot)$) which also
has the properties as given in Assumption \ref{assumAB}. Without
loss of generality, we assume  $A(\cdot)$ and $B(\cdot)$ are bounded
and continuous.

Now it is easy to see that ${\mathbb
R}^{n-1}\times(-\infty,A]\subseteq E_1$ and ${\mathbb
R}^{n-1}\times[B,\infty)\subseteq E_2$. Since the functions
$V,f_1,f_2$ are all continuous, the boundary of $E$ consists of
continuous curves.

Let $\tilde{E}_1$ be the largest connected region in $E_1$
containing the set ${\mathbb R}^{n-1}\times(-\infty,A]$, and
$\tilde{E}_2$ be the largest connected region in $E_2$ containing
the set ${\mathbb R}^{n-1}\times[B,-\infty)$, then obviously $V({\bf
x})=-f_1({\bf x}),\forall {\bf x}\in\tilde{E}_1$, and $V({\bf
x})=f_2({\bf x}),\forall {\bf x}\in\tilde{E}_2$. Furthermore
$\tilde{E}_1$ has a continuous boundary curve $\tilde{a}$ that is
bounded by $A$ and $a$, $\tilde{E}_2$ has a continuous boundary
curve $\tilde{b}$ that is bounded by $b$ and $B$.


\begin{proposition}
$\tilde{E}_1,\tilde{E}_2$ are simply connected regions.
\end{proposition}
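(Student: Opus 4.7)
The plan is to identify $\tilde E_1$ concretely as the complete sub-graph of the boundary function $\tilde a$,
$$\tilde E_1 = \bigl\{(\bar{\bf x},x_n)\in{\mathbb R}^n : x_n\le\tilde a(\bar{\bf x})\bigr\},$$
from which simple connectedness is immediate: the map $(\bar{\bf x},x_n)\mapsto(\bar{\bf x},x_n-\tilde a(\bar{\bf x}))$ is a homeomorphism onto the closed convex half-space ${\mathbb R}^{n-1}\times(-\infty,0]$. The argument for $\tilde E_2$ is symmetric, identifying it as the epigraph of $\tilde b$.

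The crux is vertical monotonicity of $E_1$: if $(\bar{\bf x},x_n)\in E_1$ and $A(\bar{\bf x})<y<x_n$, then $(\bar{\bf x},y)\in E_1$. The key analytic input is that by the Corollary after Proposition \ref{Jandf}, $x_n\le a(\bar{\bf x})$, so by Proposition \ref{f1f2Hsign} the quantity $\phi(\bar{\bf x},z):=(\alpha-\mathcal L)f_1(\bar{\bf x},z)+H(\bar{\bf x},z)$ is strictly negative at every height $z\le x_n$. To bound $V(\bar{\bf x},y)$ from above I would take $\tau=\infty$ for $P_1$, reducing the game to a one-sided optimal stopping problem for $P_2$; Dynkin's formula applied to $f_1$ then gives
$$V(\bar{\bf x},y) \le \sup_{\sigma}\left\{-f_1(\bar{\bf x},y) + E_{(\bar{\bf x},y)}\int_0^\sigma e^{-\alpha t}\phi({\bf X}_t)\,dt\right\}.$$
Restricting attention to stopping times $\sigma\le\tau_a$ (the first hitting time of the curve $a$), $\phi<0$ along the trajectory makes the inner expectation non-positive, so the restricted supremum equals $-f_1(\bar{\bf x},y)$; combined with $V\ge-f_1$ this yields $V(\bar{\bf x},y)=-f_1(\bar{\bf x},y)$. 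Having shown $E_1$ is vertically downward-closed, the vertical segment from $(\bar{\bf x},y)$ up to $(\bar{\bf x},x_n)\in\tilde E_1$ is a connected subset of $E_1$ meeting $\tilde E_1$, hence lies in $\tilde E_1$ by maximality, proving the sub-graph identification.

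The main obstacle is extending the bound from $\sigma\le\tau_a$ to the full supremum over stopping times. Once the path crosses $\{z=a\}$ the sign of $\phi$ flips and the naive integral estimate breaks down. The correct fix is to invoke the Dynamic Programming Principle for the one-sided value $\hat V_2$, writing $\hat V_2({\bf x})=\sup_\sigma E_{\bf x}[\int_0^{\sigma\wedge\tau_a}e^{-\alpha t}H\,dt+e^{-\alpha(\sigma\wedge\tau_a)}\hat V_2({\bf X}_{\sigma\wedge\tau_a})]$, and to close the argument by an envelope/comparison step that combines the strict sign of $\phi$ below the curve $a$ with the boundary identification $\hat V_2=-f_1$ already established there. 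A viable alternative is a strong-maximum-principle/Hopf-lemma argument applied to $V+f_1$ on a small ball straddling $(\bar{\bf x},x_n)$ chosen to avoid $E_2$, exploiting $(\alpha-\mathcal L)(V+f_1)<0$ in the continuation region; this requires sufficient interior regularity of $V$ and of $\partial E_1$, which is the delicate technical hurdle.
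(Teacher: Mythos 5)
There is a genuine gap, and it is structural rather than merely technical. Your strategy is to identify $\tilde E_1$ with the full sub-graph $\{(\bar{\bf x},x_n):x_n\le\tilde a(\bar{\bf x})\}$ via vertical downward-closedness of $E_1$, and to deduce simple connectedness from a homeomorphism onto a half-space. But that identification is a strictly stronger statement than the proposition, and it is not available under the assumptions in force at this point. The paper introduces the cone conditions of Assumptions \ref{unif1} and \ref{unif2} \emph{after} this proposition precisely because Assumptions \ref{assumpHf} and \ref{assumAB} alone do not force the continuation region to be bounded by graphs over $\bar{\bf x}$ in dimension $n>1$: the set $\{V>-f_1\}$ may send slanted ``fingers'' down below the curve $a$ (the very next proposition in the paper only shows such a finger must connect back to ${\mathbb R}^{n-1}\times[a,b]$, not that it is vertical), so a point of $E_1$ can in principle sit above a point of $E$ with the same $\bar{\bf x}$. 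Your key monotonicity lemma is therefore not just unproven but unprovable at this stage without importing the later assumptions. On the analytic side you candidly flag the second gap yourself: the estimate $V\le -f_1$ only closes for stopping times $\sigma\le\tau_a$, since $(\alpha-\mathcal L)f_1+H$ changes sign across the curve $a$, and neither of the proposed repairs (a DPP/envelope step, or a Hopf-lemma argument for $V+f_1$) is carried out; the Hopf route in particular presupposes interior regularity of $V$ and of $\partial E_1$ that has not been established yet.

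The paper's own proof is much more modest and avoids all of this. It interprets ``simply connected'' as ``no holes'': suppose a connected component $D_{{\bf x}_0}$ of $\{V>-f_1\}$ has its entire boundary inside $\tilde E_1$. Since $\tilde E_1\subseteq E_1\subseteq{\mathbb R}^{n-1}\times(-\infty,a]$, such a pocket lies below the curve $a$, where $(\alpha-\mathcal L)f_1+H<0$ by Proposition \ref{f1f2Hsign}. Any stopping time of $P_2$ started at ${\bf x}_0$ is dominated by the exit time of $D_{{\bf x}_0}$ (the game stops on $\partial D_{{\bf x}_0}\subset\tilde E_1$), so Dynkin's formula applied to $f_1$ gives a payoff strictly below $-f_1({\bf x}_0)$, contradicting $V({\bf x}_0)>-f_1({\bf x}_0)$. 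This is a purely local argument on the putative hole and requires no global graph structure and no control of the supremum over all stopping times. If you want to salvage your approach, the honest route is to prove only the no-hole statement by this localized Dynkin argument here, and defer the sub-graph identification to the point where Assumptions \ref{unif1} and \ref{unif2} are in force.
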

\begin{proof}
It suffices to prove that $\tilde{E}_1$ is simply connected. Suppose
there is a point ${\bf x}_0$ such that $V({\bf x}_0)>-f_1({\bf
x}_0)$, and the boundary $\partial D_{\bf x_0}$ of the largest
connected region containing ${\bf x}_0$ as well as the points ${\bf
x}$ with $V({\bf x})>-f_1({\bf x})$ belongs to $\tilde{E}_1$, i.e.,
$\partial D_{\bf x_0}\subset\tilde{E}_1$. Then for any ${\bf x}\in
\partial D_{\bf x_0}$, $V({\bf x})=-f_1({\bf x})$, and for any ${\bf x}\in D_{\bf
x_0}$, $(\alpha-\mathcal{L})f_1({\bf x})+H({\bf x})<0$. Consider any
stopping strategy for player $P_2$ with the reward
\begin{eqnarray}
&&E_{\bf x_0}\left(\int_0^\sigma e^{-\alpha t}H({\bf
X}_t)dt+e^{-\alpha\sigma}(-f_1({\bf X}_\sigma))\right)\nonumber\\
&&=-f_1({\bf x}_0)+E_{\bf x_0}\int_0^\sigma e^{-\alpha
t}((\alpha-\mathcal{L})f_1({\bf X}_t)+H({\bf X}_t))dt\nonumber\\
&&<-f_1({\bf x}_0),
\end{eqnarray} since $\sigma\leqslant \tau_{\partial D_{\bf x_0}}$ a.s.,
where $\tau_{\partial D_{\bf x_0}}$ is the first hitting time to
$\partial D_{\bf x_0}$, and on $\partial D_{\bf x_0}$ it is known
that the game should be stopped by $P_2$ with the payoff $-f_1$. But
this is a contradiction since we have assumed that $V({\bf
x}_0)>-f_1({\bf x}_0)$. Therefore there is no \emph{hole} in
$\tilde{E}_1$.
\end{proof}

\begin{proposition}
If there is any point $(\bar{\bf x},x_n)$ with $x_n<a(\bar{\bf x})$
such that $V(\bar{\bf x},x_n)>-f_1(\bar{\bf x},x_n)$, then  the
connected region containing this point with $V>-f_1$ is connected to
the region ${\mathbb{R}}^{n-1}\times[a,b]$. A similar result holds
on the curve $b$.
\end{proposition}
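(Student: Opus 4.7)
The plan is to argue by contradiction. Let $D$ denote the connected component of the open set $\{V>-f_1\}$ containing the given point ${\bf x}_0=(\bar{\bf x},x_n)$, and suppose that $D$ is not connected to $\mathbb{R}^{n-1}\times[a,b]$; equivalently, $D$ is disjoint from the connected open set $\mathbb{R}^{n-1}\times(a,b)$, which is already known to lie in $\{V>-f_1\}$ as part of $E$.

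The first step is a topological one: to show that $D\subseteq\{(\bar{\bf y},y):y<a(\bar{\bf y})\}$. Because $D$ is an open connected subset of $\mathbb{R}^n$ it is path-connected, so every other point of $D$ can be joined to ${\bf x}_0$ by a continuous path $(\bar{\bf y}(s),y(s))$ inside $D$. Along such a path the scalar function $s\mapsto y(s)-a(\bar{\bf y}(s))$ is continuous and starts out negative; if some endpoint had $y\geqslant a(\bar{\bf y})$, the intermediate value theorem would produce an interior path point on the curve $a$. Since $D$ is open, a small ball about that point would sit inside $\{V>-f_1\}$ and would intersect $\mathbb{R}^{n-1}\times(a,b)$ on its upper side, forcing $D$ to contain $\mathbb{R}^{n-1}\times(a,b)$, against our standing assumption. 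With $D\subseteq\{y<a(\bar{\bf y})\}$ in hand, Proposition \ref{f1f2Hsign} delivers $(\alpha-\mathcal{L})f_1+H<0$ throughout $D$.

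The contradiction is then obtained via the Dynkin-game inequality. Continuity of $V$ and $f_1$ forces $V=-f_1$ on $\partial D$, so $\partial D\subseteq E_1$ and the exit time $\hat{\sigma}=\tau_{\partial D}$ from ${\bf x}_0$ coincides with $\tau_{E_1}$. Taking $P_1$'s strategy to be $\tau=\infty$ in (\ref{saddle_Cont}) and applying Dynkin's formula to $f_1$ on $\hat{\sigma}\wedge T$ (with $T\to\infty$ justified by boundedness of $f_1$ and the exponential discount) yields
\begin{displaymath}
V({\bf x}_0)\leqslant J_{{\bf x}_0}(\infty,\hat{\sigma})=-f_1({\bf x}_0)+E_{{\bf x}_0}\int_0^{\hat{\sigma}}e^{-\alpha t}\bigl[(\alpha-\mathcal{L})f_1({\bf X}_t)+H({\bf X}_t)\bigr]dt.
\end{displaymath}
Because the nondegenerate diffusion spends positive time in the open set $D$ starting from its interior point ${\bf x}_0$, the expectation is strictly negative, giving $V({\bf x}_0)<-f_1({\bf x}_0)$ and contradicting the hypothesis.

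The mirror statement on the curve $b$ is proved symmetrically: take the connected component of $\{V<f_2\}$, apply the intermediate value theorem to $y(s)-b(\bar{\bf y}(s))$ to confine it above $b$, let $P_2$ play $\sigma=\infty$ and $P_1$ play the exit time, and use the opposite sign in Proposition \ref{f1f2Hsign} together with the lower half of (\ref{saddle_Cont}). The genuine obstacle is the topological step that confines $D$ to the half-space below the curve $a$ — this is where one must combine continuity of $a$, openness of $\{V>-f_1\}$, and the separation of $\mathbb{R}^{n-1}\times(a,b)$ as a distinct component; once it is secured, the remaining Dynkin computation is essentially the calculation already used in Proposition \ref{Jandf}.
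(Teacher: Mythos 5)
Your proof is correct and follows essentially the same route as the paper's: assume the component $D$ is not connected to ${\mathbb{R}}^{n-1}\times[a,b]$, confine it below the curve $a$, and apply Dynkin's formula together with the sign of $(\alpha-\mathcal{L})f_1+H$ from Proposition \ref{f1f2Hsign} to drive the payoff below $-f_1({\bf x}_0)$, contradicting $V({\bf x}_0)>-f_1({\bf x}_0)$. The only difference is one of rigor, not of method: the paper merely asserts the topological confinement and bounds the payoff ``for any stopping strategy of $P_2$,'' whereas you justify both steps explicitly (the intermediate-value argument along paths in $D$, and the saddle-point inequality (\ref{saddle_Cont}) with $\tau=\infty$ and $\hat{\sigma}=\tau_{E_1}=\tau_{\partial D}$).
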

\begin{proof}
Suppose not, then the boundary of the connected region containing
$(\bar{\bf x},x_n)$ with $V>-f_1$ is contained in ${\mathbb
R}^{n-1}\times(-\infty,a)$. For any stopping strategy for player
$P_2$, the process is stopped before it hits the curve $a$. The
expected payoff is less than or equal to $-f_1(\bar{\bf x},x_n)$
because in this region $(\alpha-\mathcal{L})f_1+H<0$, and this
contradicts the assumption $V(\bar{\bf x},x_n)>-f_1(\bar{\bf
x},x_n)$.
\end{proof}

Now it is clear that the region $E$ is connected. Recall that
$\hat{\sigma}=\tau_{E_1},\hat{\tau}=\tau_{E_2}$ are the first
hitting times to the sets $E_1,E_2$ respectively, and they are
finite a.s., we can rewrite $V$ as
\begin{displaymath}
V({\bf x})=E_{\bf
x}\left(\int_0^{\hat{\tau}\wedge\hat{\sigma}}e^{-\alpha t}H({\bf
X}_t)dt+I_{\hat{\sigma}<\hat{\tau}}e^{-\alpha\hat{\sigma}}(-f_1({\bf
X}_{\hat{\sigma}}))+I_{\hat{\tau}<\hat{\sigma}}e^{-\alpha\hat{\tau}}f_2({\bf
X}_{\hat{\tau}})\right).
\end{displaymath}

But at this point we still can not tell that $E$ is simply
connected. Let ${\bf U}$ be any connected region such that ${\mathbb
R}^{n-1}\times[a,b]\subset {\bf U}$ and ${\bf U}\subset {\mathbb
R}^{n-1}\times[A,B]$. Define $\tau_{\bf U}$ the first exit time of
this region, then obviously $\tau_{\bf U}$ is finite a.s. Define the
function
\begin{equation}\label{fdef}
F_{\bf U}({\bf x})=E_{\bf x}\left(\int_0^{\tau_{\bf U}}e^{-\alpha
t}H({\bf X}_t)dt+e^{-\alpha\tau_{\bf U}}R({\bf X}_{\tau_{\bf
U}})\right),\ \ \forall {\bf x}\in{\bf U},
\end{equation} where $R({\bf X}_{\tau_{\bf U}})=-f_1({\bf X}_{\tau_{\bf
U}})$ if ${\bf X}_{\tau_{\bf U}}\in{\mathbb R}^{n-1}\times[A,a]$,
and $R({\bf X}_{\tau_{\bf U}})=f_2({\bf X}_{\tau_{\bf U}})$ if ${\bf
X}_{\tau_{\bf U}}\in{\mathbb R}^{n-1}\times[b,B]$.

We put an assumption jointly on the process ${\bf X}_t$ and the
functions $f_1,f_2,h$. Consider any point $(\bar{\bf
x}_0,x_n)\in\bar{\bf U}$ with $x_n<a(\bar{\bf x}_0)$. Define the
cone \begin{displaymath}C_{(\bar{\bf x}_0,x_n)}=\left\{(\bar{\bf
x},y):x_n\leqslant y\leqslant a(\bar{\bf
x}),\frac{y-x_n}{\|<\bar{\bf x}-\bar{\bf x}_0,y-x_n>\|_2}\geqslant
\zeta\right\},\end{displaymath} where $\zeta\in(0,1)$ is a constant.
Construct the new connected region $\tilde{\bf U}={\bf U}\cup
C_{(\bar{\bf x}_0,x_n)}$ and define the function $F_{\tilde{\bf U}}$
similarly as in (\ref{fdef}).
\begin{assumption}\label{unif1}
For any point $(\bar{\bf x}_0,x_n)\in\bar{\bf U}$ with
$x_n<a(\bar{\bf x}_0)$, $F_{\tilde{\bf U}}(\bar{\bf
x}_0,x_n)\geqslant F_{\bf U}(\bar{\bf x}_0,x_n)$.
\end{assumption}

Similarly, if we consider any point $(\bar{\bf x}_0,x_n)\in\bar{\bf
U}$ with $x_n>b(\bar{\bf x}_0)$, we may define the cone
\begin{displaymath}C_{(\bar{\bf x}_0,x_n)}=\left\{(\bar{\bf
x},y):x_n\geqslant y\geqslant b(\bar{\bf
x}),\frac{x_n-y}{\|<\bar{\bf x}-\bar{\bf x}_0,y-x_n>\|_2}\geqslant
\zeta\right\},\end{displaymath} where $\zeta\in(0,1)$ is a constant.
Construct the new connected region $\tilde{\bf U}={\bf U}\cup
C_{(\bar{\bf x}_0,x_n)}$ and define the function $F_{\tilde{\bf U}}$
similarly as in (\ref{fdef}), then we put the following assumption.
\begin{assumption}\label{unif2}
For any point $(\bar{\bf x}_0,x_n)\in\bar{\bf U}$ with
$x_n>b(\bar{\bf x}_0)$, $F_{\tilde{\bf U}}(\bar{\bf
x}_0,x_n)\leqslant F_{\bf U}(\bar{\bf x}_0,x_n)$.
\end{assumption}

\begin{remark}
In the case of one dimensional diffusion, Assumptions \ref{assumpHf}
and \ref{assumAB} are sufficient to imply that $E$ - the
continuation region - is an interval, but in the multidimensional
case, more conditions are needed to guarantee a regular property.
\end{remark}

\begin{proposition}
Under Assumptions \ref{unif1},\ref{unif2}, the continuation region
$E$ is simply connected. Furthermore, the lower and upper boundaries
of $E$ (in $x_n$) are uniformly Lipschitz functions of $\bar{\bf
x}\in{\mathbb R}^{n-1}$.
\end{proposition}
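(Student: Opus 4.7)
The plan is to represent $E$ as the graph region
\begin{displaymath}
E=\{(\bar{\bf x},y)\in\mathbb{R}^{n-1}\times\mathbb{R}:\tilde a(\bar{\bf x})<y<\tilde b(\bar{\bf x})\}
\end{displaymath}
for continuous, uniformly Lipschitz functions $\tilde a<\tilde b$ on $\mathbb{R}^{n-1}$ with $A\leq\tilde a\leq a$ and $b\leq\tilde b\leq B$. Such a set is homeomorphic to $\mathbb{R}^{n-1}\times(0,1)$ via the straight-line interpolation $(\bar{\bf x},t)\mapsto(\bar{\bf x},(1-t)\tilde a(\bar{\bf x})+t\tilde b(\bar{\bf x}))$, hence simply connected; both conclusions of the proposition then follow at once.

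\textbf{Step 1 (vertical convexity).} I would first show that each slice $E\cap(\{\bar{\bf x}_0\}\times\mathbb{R})$ is an interval, which then defines $\tilde a$ and $\tilde b$. Suppose the contrary: there exist $y_1<y_2<y_3$ with $(\bar{\bf x}_0,y_1),(\bar{\bf x}_0,y_3)\in E$ and $(\bar{\bf x}_0,y_2)\notin E$. Since $E_1\subseteq\mathbb{R}^{n-1}\times(-\infty,a]$ and $E_2\subseteq\mathbb{R}^{n-1}\times[b,\infty)$, WLOG $(\bar{\bf x}_0,y_2)\in E_1$ and $y_2\leq a(\bar{\bf x}_0)$. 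The preceding proposition shows $(\bar{\bf x}_0,y_1)$ is connected through $E$ to the strip $\mathbb{R}^{n-1}\times[a,b]$; around this connecting path I would build a bounded region ${\bf U}$ with $\mathbb{R}^{n-1}\times[a,b]\subset{\bf U}\subset\mathbb{R}^{n-1}\times[A,B]$, $\partial{\bf U}\subseteq E_1\cup E_2$, and $(\bar{\bf x}_0,y_2)\in\partial{\bf U}$ on the lower portion of $\partial{\bf U}$ where the exit payoff is $R=-f_1$. The saddle-point representation of Theorem \ref{DgameHneq0} applied to this sub-domain gives $V=F_{\bf U}$ on $\overline{\bf U}$, and in particular $F_{\bf U}(\bar{\bf x}_0,y_2)=-f_1(\bar{\bf x}_0,y_2)$. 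Attaching the cone $C_{(\bar{\bf x}_0,y_2)}$ of Assumption \ref{unif1} to form $\tilde{\bf U}$ gives $F_{\tilde{\bf U}}(\bar{\bf x}_0,y_2)\geq-f_1(\bar{\bf x}_0,y_2)$. Because the cone ascends all the way to the curve $a$ it necessarily crosses the continuation region $E$ above (by Proposition \ref{f1f2Hsign} and the structure of $E$ near the strip), so a Dynkin-formula calculation along the lines of Proposition \ref{Jandf} upgrades this inequality to the strict $F_{\tilde{\bf U}}(\bar{\bf x}_0,y_2)>-f_1(\bar{\bf x}_0,y_2)$. Since $V\geq F_{\tilde{\bf U}}$ (stopping at the exit of $\tilde{\bf U}$ is a feasible strategy for $P_2$), this contradicts $V(\bar{\bf x}_0,y_2)=-f_1(\bar{\bf x}_0,y_2)$. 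The case $(\bar{\bf x}_0,y_2)\in E_2$ is handled symmetrically using Assumption \ref{unif2}.

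\textbf{Step 2 (continuity and Lipschitz regularity).} With each slice an interval, set $\tilde a(\bar{\bf x})=\inf\{y:(\bar{\bf x},y)\in E\}$ and $\tilde b(\bar{\bf x})=\sup\{y:(\bar{\bf x},y)\in E\}$. Continuity of $V,f_1,f_2$ together with vertical convexity yields continuity of $\tilde a,\tilde b$. For the Lipschitz bound on $\tilde a$, note the cone $C_{(\bar{\bf x}_0,x_n)}$ has aperture such that any $(\bar{\bf x},y)\in C_{(\bar{\bf x}_0,x_n)}$ satisfies $y-x_n\geq(\zeta/\sqrt{1-\zeta^2})\|\bar{\bf x}-\bar{\bf x}_0\|$. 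If $\tilde a(\bar{\bf x}_2)-\tilde a(\bar{\bf x}_1)>(\zeta/\sqrt{1-\zeta^2})\|\bar{\bf x}_2-\bar{\bf x}_1\|$, then the cone with apex at a point slightly below $(\bar{\bf x}_1,\tilde a(\bar{\bf x}_1))$ in $E_1$ reaches a point $(\bar{\bf x}_2,y^\ast)$ with $y^\ast<\tilde a(\bar{\bf x}_2)$ still inside $E_1$, and the Step 1 argument applied at $(\bar{\bf x}_2,y^\ast)$ produces the desired contradiction. Hence $\tilde a$ is Lipschitz with constant $\zeta/\sqrt{1-\zeta^2}$. Combined with the uniform Lipschitz continuity of $a,b,A,B$ from Assumption \ref{assumpHf} and the symmetric bound for $\tilde b$ via Assumption \ref{unif2}, one obtains a single uniform Lipschitz constant.

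\textbf{Step 3 (simple connectedness).} With continuous, uniformly Lipschitz $\tilde a<\tilde b$, the identification of $E$ with $\mathbb{R}^{n-1}\times(0,1)$ above gives simple connectedness.

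The principal obstacle is Step 1: constructing ${\bf U}$ carefully enough that $V=F_{\bf U}$ holds on $\overline{\bf U}$ (which requires $\partial{\bf U}$ to sit in the known stopping region $E_1\cup E_2$) and then converting the non-strict inequality of Assumption \ref{unif1} into the strict inequality $F_{\tilde{\bf U}}(\bar{\bf x}_0,y_2)>-f_1(\bar{\bf x}_0,y_2)$ required for the contradiction. The strictness relies on the cone reaching into the continuation region $E$ near its upper tip at the curve $a$ (so part of the exit reward for $P_2$ exceeds $-f_1$), together with the non-degeneracy of ${\bf X}_t$ ensuring a positive probability of exit through this new portion of $\partial\tilde{\bf U}$.
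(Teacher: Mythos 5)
Your overall strategy --- use the cone assumptions to show that the part of $E$ below $a$ (resp.\ above $b$) is a union of upward (resp.\ downward) cones of fixed aperture, read the Lipschitz constant $\zeta/\sqrt{1-\zeta^2}$ off the aperture, and deduce simple connectedness from the resulting graph representation --- is the same as the paper's. But the contradiction mechanism in your Step 1 does not close. First, the inequality $V\geqslant F_{\tilde{\bf U}}$ is not justified: $F_{\tilde{\bf U}}$ is the payoff of a particular \emph{pair} of strategies (each player stops at the exit of $\tilde{\bf U}$ through ``his'' portion of the boundary), whereas a lower bound on $V=\sup_\sigma\inf_\tau J$ requires a payoff that $P_2$ can guarantee against \emph{every} $\tau$; for the fixed $\sigma$ equal to the exit through the lower part of $\partial\tilde{\bf U}$, player $P_1$ need not stop at the upper exit, so $\inf_\tau J(\tau,\sigma)$ can be strictly smaller than $F_{\tilde{\bf U}}$. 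Second, and more seriously, the claimed upgrade to the strict inequality $F_{\tilde{\bf U}}(\bar{\bf x}_0,y_2)>-f_1(\bar{\bf x}_0,y_2)$ via Dynkin's formula goes the wrong way: throughout the cone, which lies below the curve $a$, Proposition \ref{f1f2Hsign} gives $(\alpha-\mathcal{L})f_1+H<0$, so Dynkin's formula shows that continuing inside the cone and stopping with reward $-f_1$ is strictly \emph{worse} for $P_2$ than stopping immediately. Any gain from attaching the cone must come from reaching the strip and collecting a continuation value exceeding $-f_1$ there, and the only control on that trade-off is Assumption \ref{unif1}, which is non-strict. With only $F_{\tilde{\bf U}}(\bar{\bf x}_0,y_2)\geqslant -f_1(\bar{\bf x}_0,y_2)$ there is no contradiction with $V(\bar{\bf x}_0,y_2)=-f_1(\bar{\bf x}_0,y_2)$. (The intermediate claim $V=F_{\bf U}$ on $\overline{\bf U}$ is also false for a general auxiliary ${\bf U}\neq E$, e.g.\ when ${\bf U}$ contains points of $E_1$ in its interior, although your argument ends up not using it.)

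The paper avoids all of this by applying Assumption \ref{unif1} with ${\bf U}=E$ itself, for which $F_E=V$ holds exactly by the saddle-point identity $V=J_{\bf x}(\hat\tau,\hat\sigma)$ with $\hat\tau=\tau_{E_2}$, $\hat\sigma=\tau_{E_1}$: the assumption then gives $F_{E\cup C_{(\bar{\bf x},x_n)}}(\bar{\bf x},x_n)\geqslant V(\bar{\bf x},x_n)$ at any point of $\overline{E}$ below $a$, and optimality of the continuation region forces $E\cup C_{(\bar{\bf x},x_n)}=E$, i.e.\ every such cone is already contained in $E$. That single containment delivers vertical convexity of the slices, the uniform Lipschitz bound, and simple connectedness simultaneously, with no sandwiched-point argument and no need for a strict inequality at an interior point of $E_1$. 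If you want to keep your slice-by-slice structure, you should replace Step 1 by this cone-containment argument; as written, the step that is supposed to produce the contradiction fails.
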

\begin{proof} We have shown that $E$ is connected. For any point $(\bar{\bf
x},x_n)\in\bar{E}$ with $x_n<a(\bar{\bf x})$, we consider the new
continuation region $E\cup C_{(\bar{\bf x},x_n)}$, and have
$F_{E\cup C_{(\bar{\bf x},x_n)}}(\bar{\bf x},x_n)\geqslant
V(\bar{\bf x},x_n)$ by Assumption \ref{unif1}. Since in the region
below the curve $a$, only player $P_2$ wants to stop the game who
wants to maximize his payoff, the new continuation region $E\cup
C_{(\bar{\bf x},x_n)}$ is certainly better than $E$, if not
identical, and this is a contradiction since $E$ is assumed to be
optimal. Thus $E\cup C_{(\bar{\bf x},x_n)}=E$. Since this holds for
any point $(\bar{\bf x},x_n)\in\bar{E}$ with $x_n<a(\bar{\bf x})$,
we know that the region in $E$ below the curve $a$ is simply
connected, and the lower boundary of $E$ is uniformly Lipshcitz
continuous. The second half is proved in a similar manner.
\end{proof}

Now we can claim that $\tilde{E}_1=E_1,\tilde{E}_2=E_2$. In what
follows we shall still use $\tilde{a}$ as the upper boundary (in
$x_n$) of the set $E_1$ (also the lower boundary of $E$), and
$\tilde{b}$ as the lower boundary (in $x_n$) of the set $E_2$ (also
the upper boundary of $E$), and they are uniformly Lipschitz
continuous and bounded curves. We notice that the curve $a$ in
Assumption \ref{assumpHf} is not necessarily identical to the curve
$\tilde{a}$, and the curve $b$ is not necessarily identical to the
curve $\tilde{b}$ either. Figure \ref{region} illustrates the
continuation region $E$ in a two dimensional case.

\begin{figure}[h!]
\centerline{\includegraphics[height=2.5in,
width=2.5in]{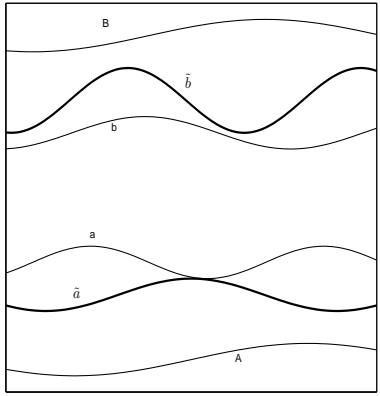}}\caption{The continuation region
between two curves $\tilde{a}(t),\tilde{b}(t)$} \label{region}
\end{figure}

\subsection{Regularities of the Value Function}

\

\begin{proposition}\label{prop2} Assuming Assumptions
\ref{assumpHf}, \ref{assumAB},  \ref{unif1} and \ref{unif2}. If the
curves $\tilde{a}=\tilde{a}(\bar{\bf x})$,
$\tilde{b}=\tilde{b}(\bar{\bf x})$ are smooth, then
 $V$ is
 smooth on ${\mathbb{R}}^{n-1}\times(\tilde{a},\tilde{b})$, and
\begin{eqnarray}
\alpha V({\bf x})-\mathcal{L}V({\bf x})&=H({\bf x}),\quad {\bf
x}\in{\mathbb{R}}^{n-1}\times(\tilde{a},\tilde{b}),\label{inAB}\\
\alpha V({\bf x})-\mathcal{L}V({\bf x})&>H({\bf x}),\quad {\bf
x}\in{\mathbb{R}}^{n-1}\times(-\infty,\tilde{a}), \\
\alpha V({\bf x})-\mathcal{L}V({\bf x})&<H({\bf x}),\quad {\bf
x}\in{\mathbb{R}}^{n-1}\times(\tilde{b},\infty).
\end{eqnarray}
\end{proposition}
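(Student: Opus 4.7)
\textbf{Proof plan for Proposition \ref{prop2}.} My approach has two independent pieces: the PDE characterization inside the continuation region $E=\mathbb{R}^{n-1}\times(\tilde a,\tilde b)$, and the strict inequalities in the stopping regions, which follow almost for free from Proposition \ref{f1f2Hsign}.

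First, I would use the variational inequality (\ref{vi}) to show $V$ is a weak solution of $(\alpha-\mathcal L)V=H$ on $E$. Fix any $\mathbf{x}_0\in E$ and pick $r>0$ small enough that the ball $B_r(\mathbf{x}_0)\subset E$ and $-f_1<V<f_2$ uniformly on $\overline{B_r(\mathbf{x}_0)}$ (possible because $V,f_1,f_2$ are continuous and the strict inequalities are open conditions). For any $\phi\in C_0^\infty(B_r(\mathbf{x}_0))\subset\mathscr F$, the function $u=V+\varepsilon\phi$ lies in the admissible set $K$ for all sufficiently small $|\varepsilon|$. Plugging $u$ into the VI and using both signs of $\varepsilon$ yields
\begin{equation*}
\mathcal E_\alpha(V,\phi)=(H,\phi),\qquad\forall\phi\in C_0^\infty(B_r(\mathbf{x}_0)).
\end{equation*}
In view of the form (\ref{NDform}) of $\mathcal E$ and the identity (\ref{rhocond}) for $\rho$, a standard integration by parts shows that this is precisely the distributional statement $\alpha V-\mathcal L V=H$ on $B_r(\mathbf{x}_0)$.

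Next, I would invoke interior elliptic regularity. Because $\mathbf A$ is non-degenerate, $\mu,\sigma$ are Lipschitz (hence the coefficients of $\mathcal L$ are Hölder of any exponent on compacts), and $H$ is continuous (smoother under the standing assumptions), Calderón--Zygmund $W^{2,p}_{\mathrm{loc}}$ estimates combined with Schauder bootstrapping upgrade the weak solution $V$ to a classical $C^2$ (indeed smooth, to the extent that $H$ and the coefficients allow) solution of (\ref{inAB}) on $E$. Since $\mathbf{x}_0\in E$ was arbitrary and the boundary curves $\tilde a,\tilde b$ are assumed smooth, the regularity extends up to (but not across) the free boundary in the usual way.

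For the two strict inequalities in the stopping regions, recall from the previous subsection that the boundary curves satisfy $\tilde a\le a$ and $\tilde b\ge b$ pointwise in $\bar{\mathbf x}$. On $\mathbb{R}^{n-1}\times(-\infty,\tilde a)\subseteq\tilde E_1$ one has $V=-f_1$, so
\begin{equation*}
\alpha V-\mathcal L V=-(\alpha-\mathcal L)f_1;
\end{equation*}
Proposition \ref{f1f2Hsign} gives $(\alpha-\mathcal L)f_1+H<0$ wherever $x_n<a(\bar{\mathbf x})$, and in particular on $\mathbb{R}^{n-1}\times(-\infty,\tilde a)$, so $\alpha V-\mathcal L V>H$ there. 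The analogous computation on $\mathbb{R}^{n-1}\times(\tilde b,\infty)$ using $V=f_2$ and the second half of Proposition \ref{f1f2Hsign} yields $\alpha V-\mathcal L V<H$.

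The main obstacle is the regularity step: deducing from $\mathcal E_\alpha(V,\phi)=(H,\phi)$ that $V$ is a classical $C^2$ (or smoother) solution. All the rest is essentially a mechanical test-function argument (for the PDE in $E$) or a direct substitution (for the inequalities in $E_1,E_2$). One must be careful that the non-degeneracy of $\mathbf A$ and the regularity of the weight $\rho$ from (\ref{rhocond}) are used to convert the Dirichlet-form identity into a pointwise uniformly elliptic PDE with Hölder coefficients before invoking Schauder-type estimates.
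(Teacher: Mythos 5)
Your proposal is correct and follows essentially the same route as the paper: both establish the weak identity $\mathcal{E}_\alpha(V,\phi)=(H,\phi)$ for test functions supported in the continuation region (you by perturbing $V$ inside $K$, the paper by citing the Dirichlet-form characterization of $H$-$\alpha$-harmonicity), then integrate by parts using (\ref{rhocond}) and invoke interior elliptic regularity, and both obtain the strict inequalities in the stopping regions by substituting $V=-f_1$ and $V=f_2$ into Proposition \ref{f1f2Hsign}. If anything, you are more explicit than the paper about the test-function perturbation and the regularity upgrade, which the paper compresses into a citation and the single assertion that continuity of $H$ implies smoothness of $V$.
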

\begin{remark}
It is still not clear under what conditions the free boundary curves
 are smooth. But it is reasonable to believe that any condition on
this should involve jointly the underlying process and all the
reward functions of this problem.
\end{remark}
\begin{proof}
We notice that $V$ is $H$-$\alpha$ harmonic on
${\mathbb{R}}^{n-1}\times(\tilde{a},\tilde{b})$, which by
\cite{Fuku11} implies the validation of the following equation:
\begin{equation}\label{VIeq}
\mathcal{E}_\alpha(V,u)=(H,u),\quad \forall u\in
C_0^{1,\cdots,1}({\mathbb{R}}^{n-1}\times(\tilde{a},\tilde{b})).
\end{equation} The continuity of $H$ implies that $V$ is smooth on the
same region, and an integration by parts yields (\ref{inAB}) on this
region. The rest of the proof follows from the fact that $V({\bf
x})=-f_1({\bf x}),\forall {\bf
x}\in{\mathbb{R}}^{n-1}\times(-\infty,\tilde{a})$ and $V({\bf
x})=f_2({\bf x}),\forall {\bf x}\in(\tilde{b},\infty)$.
\end{proof}

In what follows we will characterize the regularity of $V$ on the
boundary curves.

Let $\partial_{\bf u} V$ denote the one-sided directional derivative
along a unit vector ${\bf u}\in{\mathbb{R}}^n$ defined in the
following manner
\begin{displaymath}
\partial_{\bf u} V({\bf x})=\lim_{\lambda\to 0+}\frac{V({\bf x}+\lambda{\bf u})-V({\bf
x})}{\lambda}.
\end{displaymath}
By Proposition \ref{prop2}, it can be seen that $\partial_{\bf u}
V({\bf x})$ is well defined at any point ${\bf x}$ in any direction
${\bf u}$, and $\partial_{\bf u} V({\bf x})$ is continuous. If ${\bf
x}$ is not on the curves $\tilde{a}(\cdot)$ or $\tilde{b}(\cdot)$,
then obviously $\partial_{\bf u} V({\bf x})=\partial_{-\bf u} V({\bf
x})$. The following proposition will characterize the property of
$\partial_{\bf u} V({\bf x})$ when ${\bf x}$ is on
$\tilde{a}(\cdot)$ or $\tilde{b}(\cdot)$.

\begin{proposition} Assuming Assumptions \ref{assumpHf},
\ref{assumAB}, \ref{unif1} and \ref{unif2}. If the curves
$\tilde{a}(\bar{\bf x})$, $\tilde{b}(\bar{\bf x})$ are smooth, then
\begin{displaymath}
\partial_{\bf u} V(\bar{\bf x},\tilde{a}(\bar{\bf
x}))=-\partial_{\bf u} f_{1}(\bar{\bf x},\tilde{a}(\bar{\bf x})), \
\partial_{\bf u} V(\bar{\bf x},\tilde{b}(\bar{\bf x}))=\partial_{\bf u} f_{2}(\bar{\bf x},\tilde{b}(\bar{\bf x})),\ \forall
\bar{\bf x}\in{\mathbb{R}}^{n-1},\ \forall {\bf u}\in{\mathbb{R}}^n.
\end{displaymath}
\end{proposition}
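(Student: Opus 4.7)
Fix $\mathbf{x}_0=(\bar{\mathbf{x}}_0,\tilde{a}(\bar{\mathbf{x}}_0))$ and a unit direction $\mathbf{u}\in\mathbb{R}^n$. I would split on the direction. If $\mathbf{x}_0+\lambda\mathbf{u}\in E_1$ for all sufficiently small $\lambda>0$, then $V=-f_1$ throughout the segment and the claim is immediate from the definition of the one-sided directional derivative. Tangential directions reduce to this case up to $O(\lambda^2)$ corrections, using the smoothness of $\tilde{a}$. The essential case is when $\mathbf{x}_0+\lambda\mathbf{u}\in E$ for all small $\lambda>0$, which I treat next.

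In the essential case, the saddle-point identity (\ref{saddle_Cont}) gives $V(\mathbf{x}_0+\lambda\mathbf{u})=J_{\mathbf{x}_0+\lambda\mathbf{u}}(\hat{\tau}_\lambda,\hat{\sigma}_\lambda)$ with $\hat{\sigma}_\lambda=\tau_{E_1}$ and $\hat{\tau}_\lambda=\tau_{E_2}$ started at $\mathbf{x}_0+\lambda\mathbf{u}$. Applying Dynkin's formula to $e^{-\alpha t}(-f_1)(\mathbf{X}_t)$ on $[0,\hat{\sigma}_\lambda\wedge\hat{\tau}_\lambda]$ and combining with (\ref{Jcost}) yields the identity
\begin{displaymath}
V(\mathbf{x}_0+\lambda\mathbf{u})+f_1(\mathbf{x}_0+\lambda\mathbf{u})=E_{\mathbf{x}_0+\lambda\mathbf{u}}\!\left[\int_0^{\hat{\sigma}_\lambda\wedge\hat{\tau}_\lambda}\!\!e^{-\alpha t}\bigl[(\alpha-\mathcal{L})f_1+H\bigr](\mathbf{X}_t)\,dt+e^{-\alpha\hat{\tau}_\lambda}(f_1+f_2)(\mathbf{X}_{\hat{\tau}_\lambda})I_{\{\hat{\tau}_\lambda<\hat{\sigma}_\lambda\}}\right].
\end{displaymath}
Dividing by $\lambda$ and sending $\lambda\to 0^+$, the left-hand side converges to $\partial_{\mathbf{u}}V(\mathbf{x}_0)+\partial_{\mathbf{u}}f_1(\mathbf{x}_0)$. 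The inequality $\partial_{\mathbf{u}}V(\mathbf{x}_0)\geqslant-\partial_{\mathbf{u}}f_1(\mathbf{x}_0)$ is immediate from $V\geqslant-f_1$ and $V(\mathbf{x}_0)=-f_1(\mathbf{x}_0)$, so it remains to show the right-hand side is $o(\lambda)$ to pin down equality. A symmetric argument at the boundary $\tilde{b}$ yields the second identity $\partial_{\mathbf{u}}V(\bar{\mathbf{x}},\tilde{b}(\bar{\mathbf{x}}))=\partial_{\mathbf{u}}f_2(\bar{\mathbf{x}},\tilde{b}(\bar{\mathbf{x}}))$.

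\textbf{Main obstacle.} The delicate step is the $o(\lambda)$ bound on the right-hand side above. The first term is controlled by $C\cdot E[\hat{\sigma}_\lambda\wedge\hat{\tau}_\lambda]$ via boundedness of the integrand, and the second by $\|f_1+f_2\|_\infty\cdot P[\hat{\tau}_\lambda<\hat{\sigma}_\lambda]$; both scale at most linearly in $\lambda$ by standard harmonic-measure estimates for a non-degenerate diffusion near a smooth boundary. Extracting the extra factor of $\lambda$ that smooth fit requires has to come either from a careful cancellation between the two contributions, exploiting that $\mathbf{X}_t$ stays close to $\mathbf{x}_0$ on the relevant time scale, or, perhaps more transparently, from the classical $C^{1,\alpha}$ regularity theory for variational-inequality solutions with smooth free boundary, which yields continuity of $\nabla V$ across $\tilde{a}$ and thereby forces $\nabla V(\mathbf{x}_0)=-\nabla f_1(\mathbf{x}_0)$ from the known identity $\nabla V=-\nabla f_1$ on the interior of $E_1$.
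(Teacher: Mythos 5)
Your setup is sound: the easy inequality $\partial_{\bf u}V(\mathbf{x}_0)\geqslant-\partial_{\bf u}f_1(\mathbf{x}_0)$ from $V\geqslant-f_1$ with equality at the boundary point is exactly how the paper starts, and the identity you derive from Dynkin's formula,
\begin{displaymath}
V(\mathbf{y})+f_1(\mathbf{y})=E_{\mathbf{y}}\Bigl[\int_0^{\hat\sigma\wedge\hat\tau}e^{-\alpha t}\bigl[(\alpha-\mathcal{L})f_1+H\bigr](\mathbf{X}_t)\,dt\Bigr]+E_{\mathbf{y}}\Bigl[e^{-\alpha\hat\tau}(f_1+f_2)(\mathbf{X}_{\hat\tau})I_{\{\hat\tau<\hat\sigma\}}\Bigr],
\end{displaymath}
is correct. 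But the argument stalls exactly where you say it does, and the gap is real, not cosmetic. With $\mathbf{y}=\mathbf{x}_0+\lambda\mathbf{u}$ at distance of order $\lambda$ from $E_1$, the probability $P_{\mathbf{y}}(\hat\tau<\hat\sigma)$ of reaching the far set $E_2$ first is genuinely of order $\lambda$ (not $o(\lambda)$) for a nondegenerate diffusion, and $f_1+f_2>0$, so the second term is a positive quantity of exact order $\lambda$; the first term is then a negative quantity of the same order (note $(\alpha-\mathcal{L})f_1+H<0$ below the curve $a$ and $E_{\mathbf{y}}[\hat\sigma\wedge\hat\tau]$ is of order $\lambda$). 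Proving that these two $O(\lambda)$ contributions cancel to within $o(\lambda)$ is precisely equivalent to the smooth-fit statement, so the identity restates the problem rather than reducing it. The $C^{1,\alpha}$ obstacle-problem route you mention would close the gap but is a different and heavier piece of machinery than anything in the paper, and you do not carry it out.

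The paper extracts the missing order of smallness by a different mechanism: a contradiction argument at the boundary point itself rather than an expansion along a segment into $E$. Suppose $\partial_{\bf u}V(\mathbf{x}_0)=-\partial_{\bf u}f_1(\mathbf{x}_0)+\epsilon$ with $\epsilon>0$; by continuity of the directional derivatives this persists (with $\epsilon/2$) on a whole cone of directions $\mathcal{C}_{\bf u}$. Let $\tau_\delta$ be the exit time from the ball $B_\delta(\mathbf{x}_0)$. Two facts with different scaling in $\delta$ do the work: (i) uniform ellipticity gives $E[\tau_\delta]=o(\delta)$ (in fact $O(\delta^2)$), so the running cost, the discounting correction, and, via Dynkin's formula applied to $f_1$, the average of $\partial_{\bf v}(-f_1)\,\delta$ against the exit distribution are all $o(\delta)$; (ii) the exit distribution charges the cone $\mathcal{C}_{\bf u}\cap\partial B_\delta(\mathbf{x}_0)$ with mass bounded below by a constant $\eta>0$ independent of $\delta$. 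Feeding the suboptimal strategy ``wait until $\tau_\delta$, then play optimally'' into the saddle-point inequality (\ref{saddle_Cont}) then yields $-f_1(\mathbf{x}_0)=V(\mathbf{x}_0)\geqslant-f_1(\mathbf{x}_0)+\epsilon\eta\delta/4-|o(\delta)|$, a contradiction for small $\delta$. The asymmetry between the quadratic exit time and the order-$\delta$ spread of the exit position is the mechanism that supplies the extra factor your computation needs; without it, or without importing free-boundary regularity theory, the proposal does not reach the conclusion.
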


\begin{proof}
\begin{itemize}
\item[Step 1:]
Pick any point $(\bar{\bf x}_0,\tilde{a}(\bar{\bf x}_0))$ on the
curve $\tilde{a}(\cdot)$ and any unit vector ${\bf u}\in{\mathbb
R}^n$ (similar result can be derived for the curve
$\tilde{b}(\cdot)$), and construct a ball $B_\delta(\bar{\bf
x}_0,\tilde{a}(\bar{\bf x}_0))$ centered at this point with radius
$\delta$, and such that for any point $(\bar{\bf x},x_n)\in
B_\delta(\bar{\bf x}_0,\tilde{a}(\bar{\bf x}_0))$, we have
$A({\bar{\bf x}})<x_n<b(\bar{\bf x})$. This can be easily done under
Assumptions \ref{assumpHf} and \ref{assumAB}. Also by the smoothness
and Lipschitz property of the curve $\tilde{a}(\cdot)$, we can
choose $\delta$ small so that all the points $(\bar{\bf
x}_0,\tilde{a}(\bar{\bf x}_0))+\lambda {\bf u}$
($0<\lambda\leqslant\delta$) are either on the curve
$\tilde{a}(\cdot)$, below the curve $\tilde{a}(\cdot)$ or above the
curve $\tilde{a}(\cdot)$. In the first two cases, since we have
showed that $V=-f_1$, the result automatically holds. Now we assume
$(\bar{\bf x}_0,\tilde{a}(\bar{\bf x}_0))+\lambda {\bf u}$
($0<\lambda\leqslant\delta$) are above the curve $\tilde{a}(\cdot)$,
then it has been shown that $V((\bar{\bf x}_0,\tilde{a}(\bar{\bf
x}_0))+\lambda {\bf u})>-f_1((\bar{\bf x}_0,\tilde{a}(\bar{\bf
x}_0))+\lambda {\bf u})$, hence $\partial_{\bf u}V(\bar{\bf
x}_0,\tilde{a}(\bar{\bf x}_0))\geqslant -\partial_{\bf
u}f_1(\bar{\bf x}_0,\tilde{a}(\bar{\bf x}_0))$. Assume the equality
does not hold, then there is $\epsilon>0$ such that $\partial_{\bf
u}V(\bar{\bf x}_0,\tilde{a}(\bar{\bf x}_0))= -\partial_{\bf
u}f_1(\bar{\bf x}_0,\tilde{a}(\bar{\bf x}_0))+\epsilon$. By the
continuity of $\partial_{\bf u}V$ and the Lipschitz property of
$\tilde{a}(\cdot)$, we can construct a cone $\mathcal{C}_{\bf u}$
containing ${\bf u}$ such that $\forall {\bf v}\in \mathcal{C}_{\bf
u}$, the points $(\bar{\bf x}_0,\tilde{a}(\bar{\bf x}_0))+\lambda
{\bf v}$ ($0<\lambda\leqslant\delta$) are above the curve
$\tilde{a}(\cdot)$ and $\partial_{\bf v}V(\bar{\bf
x}_0,\tilde{a}(\bar{\bf x}_0))>-\partial_{\bf v}f_1(\bar{\bf
x}_0,\tilde{a}(\bar{\bf x}_0))+\epsilon/2$.

\item[Step 2:] Define $\tau_{\tilde{a},\delta}$ the first exit time from $B_\delta(\bar{\bf
x}_0,\tilde{a}(\bar{\bf x}_0))$. Firstly, by the fact that
$\bf{X}_t$ is uniformly elliptic, we have
\begin{displaymath}E_{(\bar{\bf x}_0,\tilde{a}(\bar{\bf
x}_0))}(\tau_{\tilde{a},\delta})=o(\delta),
\end{displaymath}where $o(\delta)$ is a small quantity satisfying $\lim_{\delta\to 0}{o(\delta)}/{\delta}=0$, (see, e.g., \cite{Fuku02}, \cite{Karatzas91}). Further, we
notice that
\begin{displaymath}
1-E_{(\bar{\bf x}_0,\tilde{a}(\bar{\bf
x}_0))}\left(e^{-\alpha\tau_{\tilde{a},\delta}}\right)=\alpha
E_{(\bar{\bf x}_0,\tilde{a}(\bar{\bf
x}_0))}\left(\int_0^{\tau_{\tilde{a},\delta}}e^{-\alpha
t}dt\right)\leqslant\alpha E_{(\bar{\bf x}_0,\tilde{a}(\bar{\bf
x}_0))}(\tau_{\tilde{a},\delta})=o(\delta).
\end{displaymath}

\item[Step 3:] Recall that player $P_2$ would stop the
game at the point $(\bar{\bf x}_0,\tilde{a}(\bar{\bf x}_0))$
immediately, and the payoff is $-f_1(\bar{\bf
x}_0,\tilde{a}(\bar{\bf x}_0))$. Consider the region
$B_\delta(\bar{\bf x}_0,\tilde{a}(\bar{\bf x}_0))$, it is shown that
play $P_1$ would not stop the game in this region. Since
$\hat{\tau}=\tau_{\tilde{a},\delta}+\hat{\tau} \circ
\theta_{\tau_{\tilde{a},\delta}}$ where $\theta_t$ is the shift
operator, we get
\begin{displaymath}
\hat{\tau}\wedge\sigma=\tau_{\tilde{a},\delta}+(\hat{\tau}\wedge\hat{\sigma})\circ\theta_{\tau_{\tilde{a},\delta}},
\end{displaymath} where
$\sigma=\tau_{\tilde{a},\delta}+\hat{\sigma}\circ\theta_{\tau_{\tilde{a},\delta}}$.
Therefore by (\ref{saddle_Cont}),
\begin{eqnarray}\label{ineqVx0}
&&-f_1(\bar{\bf x}_0,\tilde{a}(\bar{\bf x}_0))=V(\bar{\bf x}_0,\tilde{a}(\bar{\bf x}_0))\geqslant J_{(\bar{\bf x}_0,\tilde{a}(\bar{\bf x}_0))}(\hat{\tau},\sigma)\nonumber\\
&&\ \ \ \ \ \ \ =E_{(\bar{\bf x}_0,\tilde{a}(\bar{\bf
x}_0))}\left(\int_0^{\tau_{\tilde{a},\delta}}e^{-\alpha t}H({\bf
X}_t)dt\right) +E_{(\bar{\bf x}_0,\tilde{a}(\bar{\bf
x}_0))}\left(e^{-\alpha\tau_{\tilde{a},\delta}}V({\bf
X}_{\tau_{\tilde{a},\delta}})\right).
\end{eqnarray}
By Assumption \ref{assumpHf}, it is obvious that $H$ is bounded on
$B_\delta(\bar{\bf x}_0,\tilde{a}(\bar{\bf x}_0))$, i.e., there
exists $U>0$ such that $|H({\bf x})|\leqslant U,\ \forall {\bf x}\in
B_\delta(\bar{\bf x}_0,\tilde{a}(\bar{\bf x}_0))$. Then we have
\begin{eqnarray}
&&0\leqslant \left|E_{(\bar{\bf x}_0,\tilde{a}(\bar{\bf
x}_0))}\left(\int_0^{\tau_{\tilde{a},\delta}}e^{-\alpha t}H({\bf
X}_t)dt\right)\right|\leqslant UE_{(\bar{\bf x}_0,\tilde{a}(\bar{\bf
x}_0))}\left(\int_0^{\tau_{\tilde{a},\delta}}e^{-\alpha
t}dt\right)\nonumber\\
&&= \frac{U}{\alpha}\left(1-E_{(\bar{\bf x}_0,\tilde{a}(\bar{\bf
x}_0))}\left(e^{-\alpha\tau_{\tilde{a},\delta}}\right)\right)
=o(\delta).\nonumber
\end{eqnarray}

The other part
\begin{eqnarray}
&&E_{(\bar{\bf x}_0,\tilde{a}(\bar{\bf
x}_0))}\left(e^{-\alpha\tau_{\tilde{a},\delta}}V({\bf
X}_{\tau_{{a},\delta}})\right)=E_{(\bar{\bf x}_0,\tilde{a}(\bar{\bf
x}_0))}\left((1-\alpha\tau_{\tilde{a},\delta}+o(\alpha\tau_{\tilde{a},\delta}))V({\bf
X}_{\tau_{\tilde{a},\delta}})\right)\nonumber\\
&&=E_{(\bar{\bf x}_0,\tilde{a}(\bar{\bf x}_0))}\left(V({\bf
X}_{\tau_{\tilde{a},\delta}})\right)+E_{(\bar{\bf
x}_0,\tilde{a}(\bar{\bf
x}_0))}\left((-\alpha\tau_{\tilde{a},\delta}+o(\alpha\tau_{\tilde{a},\delta}))V({\bf
X}_{\tau_{\tilde{a},\delta}})\right).\nonumber
\end{eqnarray} Since $V$ is also bounded and $E_{(\bar{\bf x}_0,\tilde{a}(\bar{\bf
x}_0))}(\tau_{\tilde{a},\delta})=o(\delta)$, we get
\begin{displaymath}
E_{(\bar{\bf x}_0,\tilde{a}(\bar{\bf
x}_0))}\left(e^{-\alpha\tau_{\tilde{a},\delta}}V({\bf
X}_{\tau_{\tilde{a},\delta}})\right)=E_{(\bar{\bf
x}_0,\tilde{a}(\bar{\bf x}_0))}\left(V({\bf
X}_{\tau_{\tilde{a},\delta}})\right)+o(\delta).
\end{displaymath}

\item[Step 4:] By Dynkin's formula,
\begin{displaymath}
E_{(\bar{\bf x}_0,\tilde{a}(\bar{\bf x}_0))}\left(-f_1({\bf
X}_{\tau_{\tilde{a},\delta}})\right)=-f_1(\bar{\bf
x}_0,\tilde{a}(\bar{\bf x}_0))+E_{(\bar{\bf x}_0,\tilde{a}(\bar{\bf
x}_0))}\left(\int_0^{\tau_{\tilde{a},\delta}}\mathcal{L}(-f_1)({\bf
X}_t)dt\right).
\end{displaymath} Since the function $-f_1$ is bounded on $B_\delta(\bar{\bf x}_0,\tilde{a}(\bar{\bf
x}_0))$, and $E_{(\bar{\bf x}_0,\tilde{a}(\bar{\bf
x}_0))}(\tau_{\tilde{a},\delta})=o(\delta)$, we get
\begin{equation}\label{f_1approx}
E_{(\bar{\bf x}_0,\tilde{a}(\bar{\bf x}_0))}\left(-f_1({\bf
X}_{\tau_{\tilde{a},\delta}})\right)=-f_1(\bar{\bf
x}_0,\tilde{a}(\bar{\bf x}_0))+o(\delta).
\end{equation}
On the other hand, for each point ${\bf y}\in \partial
B_\delta(\bar{\bf x}_0,\tilde{a}(\bar{\bf x}_0))$, i.e., the
boundary of $B_\delta(\bar{\bf x}_0,\tilde{a}(\bar{\bf x}_0))$, and
by the smoothness of $-f_1$,
\begin{displaymath}
-f_1({\bf y})=-f_1(\bar{\bf x}_0,\tilde{a}(\bar{\bf
x}_0))+\partial_{\bf v}(-f_1)(\bar{\bf x}_0,\tilde{a}(\bar{\bf
x}_0))\delta+o(\delta),
\end{displaymath} so
\begin{eqnarray}\label{f_1approx1}
&&E_{(\bar{\bf x}_0,\tilde{a}(\bar{\bf x}_0))}\left(-f_1({\bf
X}_{\tau_{\tilde{a},\delta}})\right)\nonumber\\
&&=-f_1(\bar{\bf x}_0,\tilde{a}(\bar{\bf x}_0))+\int\partial_{\bf
v}(-f_1)(\bar{\bf x}_0,\tilde{a}(\bar{\bf x}_0))\delta p_{(\bar{\bf
x}_0,\tilde{a}(\bar{\bf x}_0))}(d{\bf y})+o(\delta),
\end{eqnarray} where $p_{(\bar{\bf
x}_0,\tilde{a}(\bar{\bf x}_0))}(d{\bf y})$ is the probability
density function of the exit distribution. By comparing
(\ref{f_1approx}) with (\ref{f_1approx1}), we get
\begin{displaymath}
\int\partial_{\bf v}(-f_1)(\bar{\bf x}_0,\tilde{a}(\bar{\bf
x}_0))\delta p_{(\bar{\bf x}_0,\tilde{a}(\bar{\bf x}_0))}(d{\bf
y})=o(\delta).
\end{displaymath}

\item[Step 5:] On the boundary of $B_\delta(\bar{\bf x}_0,\tilde{a}(\bar{\bf x}_0))$,
$V({\bf y})\geqslant -f_1({\bf y})$. Since $V$ is smooth a.e.,
\begin{displaymath}
V({\bf y})=V(\bar{\bf x}_0,\tilde{a}(\bar{\bf x}_0))+\partial_{\bf
v}V(\bar{\bf x}_0,\tilde{a}(\bar{\bf x}_0))\delta+o(\delta),
\end{displaymath} and
\begin{eqnarray}
&&E_{(\bar{\bf x}_0,\tilde{a}(\bar{\bf x}_0))}\left(V({\bf
X}_{\tau_{\tilde{a},\delta}})\right)\\
&&=V(\bar{\bf x}_0,\tilde{a}(\bar{\bf x}_0))+\int \partial_{\bf
v}V(\bar{\bf x}_0,\tilde{a}(\bar{\bf x}_0))\delta p_{(\bar{\bf
x}_0,\tilde{a}(\bar{\bf x}_0))}(d{\bf y})+o(\delta)\nonumber\\
&&=-f_1(\bar{\bf x}_0,\tilde{a}(\bar{\bf x}_0))+\int \partial_{\bf
v}V(\bar{\bf x}_0,\tilde{a}(\bar{\bf x}_0))\delta p_{(\bar{\bf
x}_0,\tilde{a}(\bar{\bf x}_0))}(d{\bf y})+o(\delta)\nonumber\\
&&\geqslant -f_1(\bar{\bf x}_0,\tilde{a}(\bar{\bf x}_0))+\int
\partial_{\bf v}(-f_1)(\bar{\bf x}_0,\tilde{a}(\bar{\bf x}_0))\delta
p_{(\bar{\bf x}_0,\tilde{a}(\bar{\bf x}_0))}(d{\bf
y})\nonumber\\
&& + \int_{\mathcal{C}_{\bf u}\cap\partial B_\delta(\bar{\bf
x}_0,\tilde{a}(\bar{\bf x}_0))} \frac{\epsilon}{2}\delta
p_{(\bar{\bf x}_0,\tilde{a}(\bar{\bf x}_0))}(d{\bf
y})-|o(\delta)|,\nonumber
\end{eqnarray} in view of the conclusion of Step 1, and $\mathcal{C}_{\bf u}\cap\partial B_\delta(\bar{\bf
x}_0,\tilde{a}(\bar{\bf x}_0))$ is the part of $\partial
B_\delta(\bar{\bf x}_0,\tilde{a}(\bar{\bf x}_0))$ in the cone
$\mathcal{C}_{\bf u}$. Substitute in (\ref{ineqVx0}) the results
from Steps 3 and 4, we get
\begin{eqnarray}
&&-f_1(\bar{\bf x}_0,\tilde{a}(\bar{\bf x}_0))=V(\bar{\bf
x}_0,\tilde{a}(\bar{\bf x}_0))\nonumber\\
&& \geqslant -f_1(\bar{\bf x}_0,\tilde{a}(\bar{\bf
x}_0))+\int_{\mathcal{C}_{\bf u}\cap\partial B_\delta(\bar{\bf
x}_0,\tilde{a}(\bar{\bf x}_0))} \frac{\epsilon}{2}\delta
p_{(\bar{\bf x}_0,\tilde{a}(\bar{\bf x}_0))}(d{\bf
y})-|o(\delta)|.\nonumber
\end{eqnarray}
Since ${\bf X}_t$ is uniformly elliptic, $\int_{\mathcal{C}_{\bf
u}\cap\partial B_\delta(\bar{\bf x}_0,\tilde{a}(\bar{\bf x}_0))}
 p_{(\bar{\bf x}_0,\tilde{a}(\bar{\bf
x}_0))}(d{\bf y})\to \eta$ as $\delta\to 0$, where $0<\eta<1$ is a
constant. By choosing $\delta$ small, we get
\begin{eqnarray}
-f_1(\bar{\bf x}_0,\tilde{a}(\bar{\bf x}_0))=V(\bar{\bf
x}_0,\tilde{a}(\bar{\bf x}_0))\geqslant -f_1(\bar{\bf
x}_0,\tilde{a}(\bar{\bf
x}_0))+\frac{\epsilon\delta\eta}{4}-|o(\delta)|.
\end{eqnarray} If $\delta$ is sufficiently small, the part
$\frac{\epsilon\delta\eta}{4}-|o(\delta)|>0$, and we get a
contradiction.
\end{itemize}
Now the proof is complete.
\end{proof}

As a summary we have the following theorem:
\begin{theorem}\label{Vcond} Assuming Assumptions \ref{assumpHf},
\ref{assumAB}, \ref{unif1} and \ref{unif2}. If the curves
$\tilde{a}(\bar{\bf x})$, $\tilde{b}(\bar{\bf x})$ are smooth, then
\begin{eqnarray*}
-f_1({\bf x})<&&V({\bf x})<f_2({\bf x}),\quad \forall{\bf x}\in
{\mathbb{R}}^{n-1}\times(\tilde{a},\tilde{b}),\\
V({\bf x})&&=-f_1({\bf x}), \quad \forall{\bf x}\in {\mathbb{R}}^{n-1}\times(-\infty,\tilde{a}],\\
V({\bf x})&&=f_2({\bf x}), \quad \forall{\bf x}\in
{\mathbb{R}}^{n-1}\times[\tilde{b},\infty),
\end{eqnarray*}
\begin{displaymath}
\partial_{\bf u} V(\bar{\bf x},\tilde{a}(\bar{\bf x}))=-\partial_{\bf u}
f_{1}(\bar{\bf x},\tilde{a}(\bar{\bf x}),\ \partial_{\bf u}
V(\bar{\bf x},\tilde{b}(\bar{\bf x}))=\partial_{\bf u}
f_{2}(\bar{\bf x},\tilde{b}(\bar{\bf x})),\ \forall \bar{\bf
x}\in{\mathbb{R}}^{n-1},
\end{displaymath} where ${\bf u}$ is any directional vector.

Furthermore $V$ is $C^{1,\cdots,1,1}$ on ${\mathbb{R}}^{n}$,
$C^{2,\cdots,2}$ on
${\mathbb{R}}^{n-1}\times(\tilde{a},\tilde{b})\cup{\mathbb{R}}^{n-1}\times(-\infty,\tilde{a})\cup{\mathbb{R}}^{n-1}\times(\tilde{b},\infty)$
and
\begin{eqnarray*}
\alpha V({\bf x})-\mathcal{L}V({\bf x})&=H({\bf x}),\quad
 &\forall{\bf x}\in {\mathbb{R}}^{n-1}\times(\tilde{a},\tilde{b}), \\
\alpha V({\bf x})-\mathcal{L}V({\bf x})&>H({\bf x}),\quad
 &\forall{\bf x}\in {\mathbb{R}}^{n-1}\times(-\infty,\tilde{a}), \\
\alpha V({\bf x})-\mathcal{L}V({\bf x})&<H({\bf x}),\quad
&\forall{\bf x}\in {\mathbb{R}}^{n-1}\times(\tilde{b},\infty),
\end{eqnarray*} where $\mathcal{L}$ is given in (\ref{infgen}).
\end{theorem}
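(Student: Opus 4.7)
The plan is to assemble the conclusions of the preceding propositions into one global statement and then upgrade the regularity across the two free boundary curves. The bounds $-f_1({\bf x}) < V({\bf x}) < f_2({\bf x})$ on ${\mathbb{R}}^{n-1}\times(\tilde a,\tilde b)$ are immediate from the definition of the continuation region $E$ combined with the identification $E = {\mathbb{R}}^{n-1}\times(\tilde a,\tilde b)$ established just above the theorem; likewise $V=-f_1$ on ${\mathbb{R}}^{n-1}\times(-\infty,\tilde a]$ and $V=f_2$ on ${\mathbb{R}}^{n-1}\times[\tilde b,\infty)$ follow from $\tilde E_1 = E_1$ and $\tilde E_2 = E_2$. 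The smooth-fit identities for $\partial_{\bf u} V$ on both boundary curves are the content of the immediately preceding proposition and may be quoted verbatim.

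Next I would handle the piecewise $C^{2,\cdots,2}$ regularity and the pointwise PDE/inequalities in the three open slabs separately. In the middle slab $V$ is smooth and satisfies $\alpha V-\mathcal{L}V = H$ by Proposition \ref{prop2}. In the two outside slabs $V$ coincides with $-f_1$ or $f_2$, both smooth by Assumption \ref{assumpHf}, so $V$ is smooth there as well. Substituting $V=-f_1$ in the lower slab and invoking the strict inequality $(\alpha-\mathcal{L})f_1(\bar{\bf x},x_n)+H(\bar{\bf x},x_n)<0$ for $x_n<a(\bar{\bf x})$ (hence a fortiori for $x_n<\tilde a(\bar{\bf x})\le a(\bar{\bf x})$) from Proposition \ref{f1f2Hsign} yields $\alpha V-\mathcal{L}V > H$; the symmetric argument in the upper slab gives $\alpha V-\mathcal{L}V < H$.

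The only substantive step is the global $C^{1,\cdots,1,1}$ regularity across the free boundaries $\tilde a$ and $\tilde b$. Fix a point $(\bar{\bf x}_0,\tilde a(\bar{\bf x}_0))$ on the lower free boundary. On the side $x_n<\tilde a$ the function $V=-f_1$ is smooth, so its gradient extends continuously to $\tilde a$ from below. On the side $x_n>\tilde a$, $V$ is smooth on the open middle slab by Proposition \ref{prop2}, and the hypothesized smoothness of $\tilde a$ together with standard boundary regularity for the non-degenerate second-order elliptic operator $\alpha-\mathcal{L}$ with continuous right-hand side $H$ permits a continuous extension of $\nabla V$ up to $\tilde a$ from above. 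It then suffices to match the one-sided directional derivatives in every unit direction ${\bf u}$, and this matching is exactly what the preceding smooth-fit proposition supplies. The main obstacle is justifying this one-sided regularity of $\nabla V$ on the continuation-region side of the free boundary; once that is in hand, patching the three slabs together gives $V\in C^1({\mathbb{R}}^n)$, which is the claimed $C^{1,\cdots,1,1}$ regularity, completing the proof.
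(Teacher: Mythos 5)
Your proposal is correct and matches the paper's treatment: the paper offers no separate proof of this theorem, presenting it explicitly as a summary of Proposition \ref{prop2}, the smooth-fit proposition for $\partial_{\bf u}V$ on $\tilde{a}$ and $\tilde{b}$, and the structure of $E$, $E_1$, $E_2$ established earlier, which is exactly what you assemble. The one-sided regularity of $\nabla V$ on the continuation-region side of the free boundary, which you flag as the substantive step, is the same point the paper asserts without further justification in the paragraph preceding the smooth-fit proposition.
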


\section{The Multi-Dimensional Stochastic Singular Control
Problem}\label{mdssc}
Define $h({\bf x}),W({\bf x}),\ {\bf x}\in{\mathbb{R}}^n$, as
follows:
\begin{eqnarray}
h(\bar{\bf x},y)&=&\int_{\tilde{a}(\bar{\bf x})}^yH(\bar{\bf x},u)du+C(\bar{\bf x}),\label{hcost}\\
W(\bar{\bf x},y)&=&\int_{\tilde{a}(\bar{\bf x})}^yV(\bar{\bf
x},u)du,\quad \bar{\bf x}\in{\mathbb{R}}^{n-1},\
y\in{\mathbb{R}},\label{wval}
\end{eqnarray}
where $C(\bar{\bf x})$ is a function of $\bar{\bf x}$ such that
\begin{displaymath}
\lim_{y\to \tilde{a}(\bar{\bf x})+}\alpha W(\bar{\bf
x},y)-\mathcal{L}W(\bar{\bf x},y)-h(\bar{\bf x},y)=0,
\end{displaymath}
 then
$h(\bar{\bf x},y)$ and $W(\bar{\bf x},y)$ satisfy the following:

\begin{theorem}\label{Wcond} Assuming Assumptions \ref{assumpHf},
\ref{assumAB}, \ref{unif1} and \ref{unif2}. If the curves
$\tilde{a}(\bar{\bf x})$, $\tilde{b}(\bar{\bf x})$ are smooth, then
$W$ is $C^{2,\cdots,2}$ on ${\mathbb{R}}^n$ and
\begin{eqnarray*}
&&\alpha W({\bf x})-\mathcal{L}W({\bf x})=h({\bf x}),\quad
 \forall{\bf x}\in {\mathbb{R}}^{n-1}\times(\tilde{a},\tilde{b}), \\
&&\alpha W({\bf x})-\mathcal{L}W({\bf x})<h({\bf x}),\quad
\forall{\bf x}\in {\mathbb{R}}^{n-1}\times(-\infty,\tilde{a}) \cup
{\mathbb{R}}^{n-1}\times(\tilde{b},\infty),\\
&&-f_1({\bf x})<\frac{\partial }{\partial x_n}W({\bf x})<f_2({\bf x}),\quad \forall{\bf x}\in {\mathbb{R}}^{n-1}\times(\tilde{a},\tilde{b}),\\
&&\frac{\partial}{\partial x_n}W({\bf x})=-f_1({\bf x}), \quad \forall{\bf x}\in {\mathbb{R}}^{n-1}\times(-\infty,\tilde{a}],\\
&&\frac{\partial}{\partial x_n}W({\bf x})=f_2({\bf x}), \quad
\forall{\bf x}\in {\mathbb{R}}^{n-1}\times[\tilde{b},\infty),
\end{eqnarray*} and
\begin{eqnarray*}
&&\frac{\partial^2}{\partial x_n\partial x_k}W(\bar{\bf
x},\tilde{a}(\bar{\bf x}))=-\frac{\partial f_1}{\partial
x_k}(\bar{\bf
x},\tilde{a}(\bar{\bf x})),\nonumber\\
&&\frac{\partial^2}{\partial x_n\partial x_k}W(\bar{\bf
x},\tilde{b}(\bar{\bf x}))=\frac{\partial f_2}{\partial
x_k}(\bar{\bf x},\tilde{b}(\bar{\bf x})),\ \forall\bar{\bf
x}\in{\mathbb{R}}^{n-1},\ 1\leqslant k\leqslant n.\nonumber
\end{eqnarray*}
\end{theorem}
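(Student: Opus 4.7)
The plan is to exploit the structural fact that the coefficients $\mu_i,\sigma_{ij}$ depend only on $(x_1,\ldots,x_{n-1})$, so $\mathcal{L}$ commutes with $\partial/\partial x_n$. This reduces every PDE statement about $W$ to a PDE statement about $V=\partial_{x_n}W$ that has already been proved in Theorem \ref{Vcond}.

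I will introduce the residual $\Phi(\bar{\bf x},y):=\alpha W(\bar{\bf x},y)-\mathcal{L}W(\bar{\bf x},y)-h(\bar{\bf x},y)$. Using $\partial_{x_n}W=V$, $\partial_{x_n}h=H$, and $[\mathcal{L},\partial_{x_n}]=0$, one computes
\[
\partial_{x_n}\Phi(\bar{\bf x},y)=\alpha V(\bar{\bf x},y)-\mathcal{L}V(\bar{\bf x},y)-H(\bar{\bf x},y).
\]
On the continuation strip $\mathbb{R}^{n-1}\times(\tilde{a},\tilde{b})$ this vanishes by Theorem \ref{Vcond}, so $\Phi$ depends only on $\bar{\bf x}$; the defining property of $C(\bar{\bf x})$ forces $\Phi(\bar{\bf x},\tilde{a}(\bar{\bf x})^+)=0$, hence $\Phi\equiv 0$ on the strip. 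On $\mathbb{R}^{n-1}\times(-\infty,\tilde{a})$ the substitution $V=-f_1$ gives $\partial_{x_n}\Phi=-[(\alpha-\mathcal{L})f_1+H]$, which is strictly positive for $y<\tilde{a}(\bar{\bf x})$ by Proposition \ref{f1f2Hsign} and $\tilde{a}\leqslant a$; integrating upward to $\tilde{a}(\bar{\bf x})$ (where $\Phi$ vanishes by continuity) yields $\Phi<0$, and the symmetric argument handles the region above $\tilde{b}$. The conditions on $\partial_{x_n}W$ follow immediately from $\partial_{x_n}W=V$ and Theorem \ref{Vcond}, and the boundary identities for $\partial^2_{x_n x_k}W$ follow from $\partial^2_{x_n x_k}W=\partial_{x_k}V$ combined with the directional smooth-fit identity of Theorem \ref{Vcond} applied in the direction ${\bf e}_k$.

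It remains to upgrade $W$ to $C^{2,\cdots,2}(\mathbb{R}^n)$. Smoothness on the interior of each of the three open regions follows from Leibniz differentiation in the bounds, using the $C^{2,\cdots,2}$-smoothness of $V$ there together with the smoothness of $\tilde{a}$. Continuity of the second derivatives involving $x_n$ across the curves $\tilde{a},\tilde{b}$ reduces to continuity of $\nabla V$, which is the smooth fit of Theorem \ref{Vcond}. For the pure transverse partials $\partial^2_{x_j x_k}W$ with $j,k<n$, one differentiates $W$ under the integral sign twice; the Leibniz boundary terms involve only $V$ and $\nabla V$ evaluated on $\tilde{a}$, which coincide with $-f_1$ and $-\nabla f_1$ by smooth fit and are therefore smooth in $\bar{\bf x}$, while the interior integral $\int_{\tilde{a}(\bar{\bf x})}^y\partial^2_{x_j x_k}V(\bar{\bf x},u)\,du$ is continuous because the $L^\infty$-jump of $\partial^2_{x_j x_k}V$ across $\tilde{a}$ occurs only on the single slice $u=\tilde{a}(\bar{\bf x})$ of the integration domain.

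I expect this last step to be the main obstacle. Although the transverse integration intuitively buys one extra derivative in $x_n$ and the smooth fit of Theorem \ref{Vcond} supplies the first-order matching, gluing the interior $C^2$-pieces of $V$ into a globally $C^{2,\cdots,2}$ function $W$ requires one to simultaneously track the moving-boundary contributions from $\tilde{a}(\bar{\bf x})$ and $\tilde{b}(\bar{\bf x})$, control the jumps of $\partial^2_{x_j x_k}V$ across these curves, and verify that the pointwise directional smooth fit of Theorem \ref{Vcond} assembles into continuous $\bar{\bf x}$-traces of the relevant second derivatives. Making this assembly rigorous is the technical crux of the argument.
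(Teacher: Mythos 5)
Your proposal follows essentially the same route as the paper: the paper likewise defines $U(y)=\alpha W(\bar{\bf x},y)-\mathcal{L}W(\bar{\bf x},y)-h(\bar{\bf x},y)$ along each $x_n$-fiber, differentiates to get $\alpha V-\mathcal{L}V-H$ (implicitly using that the coefficients do not depend on $x_n$, which you make explicit), integrates from $\tilde{a}$ using the sign information of Theorem \ref{Vcond}, reads off the $\partial_{x_n}W$ and mixed-derivative statements from $\partial_{x_n}W=V$ and the directional smooth fit, and handles global continuity of the second derivatives by the same Leibniz-rule plus zero-Lebesgue-measure argument (its Lemma \ref{cont_alpha_L}). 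The only presentational difference is that the paper isolates the continuity of $\alpha W-\mathcal{L}W$ as a preliminary lemma before anchoring $U(\tilde{a})=0$, which is the cleaner logical order since your integration step already invokes that continuity.
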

We first need a lemma.

\begin{lemma}\label{cont_alpha_L} Assuming Assumptions
\ref{assumpHf}, \ref{assumAB}, \ref{unif1} and \ref{unif2} and  that
the curves $\tilde{a}(\bar{\bf x})$, $\tilde{b}(\bar{\bf x})$ being
smooth. The function $\alpha W({\bf x})-\mathcal{L}W({\bf x})$ is
continuous.
\end{lemma}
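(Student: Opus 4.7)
The plan is to prove continuity of $\alpha W-\mathcal{L}W$ by establishing that every second partial derivative of $W$ extends continuously to all of $\mathbb{R}^n$; together with the continuity of $W$, this gives the lemma. The main input is Theorem \ref{Vcond}: $V$ is $C^{1,\ldots,1,1}$ on $\mathbb{R}^n$ and $C^{2,\ldots,2}$ separately on each of the three open slabs $\mathbb{R}^{n-1}\times(-\infty,\tilde{a})$, $\mathbb{R}^{n-1}\times(\tilde{a},\tilde{b})$, $\mathbb{R}^{n-1}\times(\tilde{b},\infty)$, together with the assumed smoothness of the curves $\tilde{a}(\bar{\bf x})$, $\tilde{b}(\bar{\bf x})$ and the matching identities $V(\bar{\bf x},\tilde{a})=-f_1(\bar{\bf x},\tilde{a})$, $V(\bar{\bf x},\tilde{b})=f_2(\bar{\bf x},\tilde{b})$.

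Since by (\ref{wval}) one has $\partial_{x_n}W=V$, I would immediately read off $\partial_{x_n}^2 W=\partial_{x_n}V$ and $\partial_{x_n}\partial_{x_k}W=\partial_{x_k}V$ for $k<n$, both continuous on $\mathbb{R}^n$ by the $C^{1,\ldots,1,1}$ regularity of $V$. Hence only the pure $\bar{\bf x}$-second derivatives $\partial_{x_l}\partial_{x_k}W$ with $k,l<n$ require genuine analysis.

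For these I would differentiate $W(\bar{\bf x},y)=\int_{\tilde{a}(\bar{\bf x})}^{y}V(\bar{\bf x},u)\,du$ twice via Leibniz, using $V(\bar{\bf x},\tilde{a}(\bar{\bf x}))=-f_1(\bar{\bf x},\tilde{a}(\bar{\bf x}))$, to obtain a representation of the form
\[
\partial_{x_l}\partial_{x_k}W(\bar{\bf x},y)=\Phi_{kl}(\bar{\bf x})-\partial_{x_l}\tilde{a}(\bar{\bf x})\,\partial_{x_k}V(\bar{\bf x},\tilde{a}(\bar{\bf x}))+I_{kl}(\bar{\bf x},y),
\]
where $\Phi_{kl}$ depends only on $\bar{\bf x}$ and $I_{kl}(\bar{\bf x},y)=\int_{\tilde{a}(\bar{\bf x})}^{y}\partial_{x_l}\partial_{x_k}V(\bar{\bf x},u)\,du$. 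The term $\Phi_{kl}$ is continuous by smoothness of $\tilde{a}$ and $f_1$, and the boundary trace is continuous in $\bar{\bf x}$ because $\tilde{a}\in C^1$ and $\partial_{x_k}V$ is continuous on $\mathbb{R}^n$.

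The hard part will be the continuity of $I_{kl}$: the integrand $\partial_{x_l}\partial_{x_k}V$ is bounded on compacta and continuous on each slab in $u$, but may jump across $u=\tilde{a}(\bar{\bf x})$ and $u=\tilde{b}(\bar{\bf x})$, and the lower limit itself depends on $\bar{\bf x}$. Fixing $(\bar{\bf x}_0,y_0)$, I would split $I_{kl}(\bar{\bf x},y)-I_{kl}(\bar{\bf x}_0,y_0)$ into two boundary-shift integrals $\int_{\tilde{a}(\bar{\bf x})}^{\tilde{a}(\bar{\bf x}_0)}$ and $\int_{y_0}^{y}$, which vanish as $(\bar{\bf x},y)\to(\bar{\bf x}_0,y_0)$ by local boundedness of the integrand and continuity of $\tilde{a}$, plus the frozen-limits term $\int_{\tilde{a}(\bar{\bf x}_0)}^{y_0}\bigl[\partial_{x_l}\partial_{x_k}V(\bar{\bf x},u)-\partial_{x_l}\partial_{x_k}V(\bar{\bf x}_0,u)\bigr]\,du$. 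For this last piece, continuity of $\tilde{a},\tilde{b}$ ensures that for every $u\notin\{\tilde{a}(\bar{\bf x}_0),\tilde{b}(\bar{\bf x}_0)\}$ the point $(\bar{\bf x},u)$ eventually lies in the same open slab as $(\bar{\bf x}_0,u)$, so the integrand converges pointwise a.e. to $0$ as $\bar{\bf x}\to\bar{\bf x}_0$; dominated convergence then closes the argument. Combining, every second partial of $W$ is continuous on $\mathbb{R}^n$, so $\mathcal{L}W$ and therefore $\alpha W-\mathcal{L}W$ are continuous.
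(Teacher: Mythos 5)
Your proposal is correct and follows essentially the same route as the paper: reduce everything to the pure $\bar{\bf x}$-second derivatives via $\partial_{x_n}W=V$, differentiate the integral representation of $W$ twice by Leibniz, and observe that the boundary terms are continuous while the integral of $\partial_{x_l}\partial_{x_k}V$ is unaffected by the possible discontinuity of the integrand on the measure-zero curves $\tilde a,\tilde b$. Your dominated-convergence treatment of the term $I_{kl}$ is simply a more explicit version of the paper's one-line ``the curves have zero Lebesgue measure'' justification.
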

\begin{proof}
This result obviously holds for ${\bf
x}\in{\mathbb{R}}^{n-1}\times(-\infty,\tilde{a})\cup{\mathbb{R}}^{n-1}\times(\tilde{a},\tilde{b})\cup{\mathbb{R}}^{n-1}\times(\tilde{b},\infty)$.
On the curves $\tilde{a}(\bar{\bf x})$ and $\tilde{b}(\bar{\bf x})$,
$W$ is twice continuously differentiable along the $x_n$ direction
by (\ref{wval}). The only term that seems not to be continuous in
this function is $\mathcal{L}W({\bf x})$, which involves the first
and second derivative with respect to each variable.  Denote
$\partial_{x_k} W$ the directional derivative along $x_k, 1\leqslant
k\leqslant n-1$, then by (\ref{wval}) we have the following:
\begin{displaymath}\partial_{x_k}W(\bar{\bf x},y)=\int_{\tilde{a}(\bar{\bf
x})}^y \partial_{x_k}V(\bar{\bf x},u)du-V(\bar{\bf
x},\tilde{a}(\bar{\bf x}))\partial_{x_k}\tilde{a}(\bar{\bf x}).
\end{displaymath} Notice that  $V$ is
$C^{1,1,...,1}$ on ${\mathbb{R}}^n$, so $\partial_{x_k}W(\bar{\bf
x},y)=\partial_{x_k}W({\bf x})$ is continuous in ${\bf x}$. Now
consider
\begin{eqnarray*}
\frac{\partial^2 W}{\partial x_k^2}(\bar{\bf
x},y)&=&\int_{\tilde{a}(\bar{\bf x})}^y\frac{\partial^2 V}{\partial
x_k^2}(\bar{\bf x},u)du-2\partial_{x_k}V(\bar{\bf
x},\tilde{a}(\bar{\bf x}))\cdot\partial_{x_k}\tilde{a}(\bar{\bf
x})\\
\quad && -\partial_{x_n}V(\bar{\bf x},\tilde{a}(\bar{\bf
x}))\cdot(\partial_{x_k}\tilde{a}(\bar{\bf x}))^2-V(\bar{\bf
x},\tilde{a}(\bar{\bf x}))\frac{\partial^2 \tilde{a}}{\partial
x_k^2}(\bar{\bf x}).
\end{eqnarray*} Because the two curves $\tilde{a}(\bar{\bf x})$ and $\tilde{b}(\bar{\bf
x})$ have zero Lebesgue measure, and the functions
$\partial_{x_k}V$, $\partial_{x_k}\tilde{a}$ and $\frac{\partial^2
\tilde{a}}{\partial x_k^2}$ are all continuous, we conclude that
$\frac{\partial^2 W}{\partial x_k^2}$ is continuous, $1\leqslant
k\leqslant n-1$. The continuity of $\frac{\partial^2 W}{\partial
x_i\partial x_j},i\neq j,$ can be proved in a similar manner.
Combined with  previous argument that $\frac{\partial^2 W}{\partial
x_n^2}$ is continuous, this lemma is proved.
\end{proof}
\begin{remark}
Since $(\alpha-\mathcal{L})W$ is continuous, and the functions $V$
and $H$ are continuous too, we know that the function $h({\bf x})$
in (\ref{hcost}) is continuous, hence the continuity of $C(\bar{\bf
x})$ in (\ref{hcost}).
\end{remark}

\begin{proof}[Proof of Theorem \ref{Wcond}]
For fixed $\bar{\bf x}$, consider the function
\begin{displaymath}
U(y)=\alpha W(\bar{\bf x},y)-\mathcal{L}W(\bar{\bf x},y)-h(\bar{\bf
x},y)
\end{displaymath}
with
\begin{displaymath}
U'(y)=\alpha V(\bar{\bf x},y)-\mathcal{L}V(\bar{\bf x},y)-H(\bar{\bf
x},y),
\end{displaymath} and we know $U(\tilde{a}(\bar{\bf x}))=0$. Notice that $U'(y)=0$ for $\tilde{a}(\bar{\bf x})< y< \tilde{b}(\bar{\bf
x})$; $U'(y)>0$ for $y<\tilde{a}(\bar{\bf x})$; $U'(y)<0$ for
$y>\tilde{b}(\bar{\bf x})$, and by Lemma \ref{cont_alpha_L} the
function $U(y)$ is continuous, it can be seen that
\begin{displaymath}
\alpha W(\bar{\bf x},y)-\mathcal{L}W(\bar{\bf x},y)<h(\bar{\bf
x},y), {\rm\ for\ } y<\tilde{a}(\bar{\bf x}) {\rm\ or\ }
y>\tilde{b}(\bar{\bf x}).
\end{displaymath} The rest of the proof is obvious.
\end{proof}

The result of Theorem \ref{Wcond} gives conditions to the solution
of the stochastic singular control problem (\ref{scost}) and
(\ref{mincostw}) (see, e.g., \cite{Taksar85}), where the holding
cost $h(\cdot)$ is given in (\ref{hcost}) and the boundary penalty
costs $f_1(\cdot),f_2(\cdot)$ are given in Assumption
\ref{assumpHf}.

We call a quadruplet $\mathcal{S}=(S,{\bf X}_t,A_t^{(1)},A_t^{(2)})$
($\mathcal{S}=(A_t^{(1)},A_t^{(2)})$ for simplicity) admissible
policy if the following conditions are satisfied:

\begin{assumption}\label{assumpgen}
\begin{enumerate}

\item[{[}1{]}] $S$ is a compact region given in the form
${\mathbb{R}}^{n-1}\times[\beta,\gamma]$ where $\beta(\bar{\bf x}),
\gamma(\bar{\bf x})$ are continuous functions of $\bar{\bf
x}\in{\mathbb{R}}^{n-1}$ with  $\beta(\bar{\bf x})< \gamma(\bar{\bf
x})$.

\item[{[}2{]}] There is a filtered measurable space $(\Omega,\{\mathcal{F}_t\}_{t\geqslant
0})$ subject to usual conditions and a probability measure $\{P_{\bf
x}\}_{{\bf x}\in S}$ on it such that

\begin{itemize}

\item[] $\{{\bf X}_t\}_{t\geqslant 0}$ is an
$\{\mathcal{F}_t\}$-adapted process, and

\item[] $\{A_t^{(1)},A_t^{(2)}\}_{t\geqslant 0}$ are $\{\mathcal{F}_t\}$-adapted right continuous
processes with bounded variation such that
\begin{equation}\label{Acond}
E_{\bf x}\left(\int_{0^-}^\infty e^{-\alpha
t}dA_t^{(1)}\right)<\infty, E_{\bf x}\left(\int_{0^-}^\infty
e^{-\alpha t}dA_t^{(2)}\right)<\infty,\forall {\bf x}\in S,
\end{equation} and $A_t^{(1)}-A_t^{(2)}$ is the minimal decomposition
of a bounded variation process into a difference of two increasing
processes.
\end{itemize}

\item[{[}3{]}] There are $\{\mathcal{F}_t\}$-adapted independent Brownian
motions $B_{1t},...,B_{mt}$ ($m\geqslant n$) starting at the origin
under $P_{\bf x}$ for any ${\bf x}\in S$ such that the following
controlled diffusion ${\bf X}_t=(X_{1t},...,X_{nt})$
\begin{eqnarray}\label{cdiff}
dX_{1t}&=&\mu_1dt+\sigma_{11}dB_{1t}+\cdots+\sigma_{1m}dB_{mt},\\
\vdots && \quad\quad \vdots \quad\quad\quad \vdots\nonumber\\
dX_{nt}&=&\mu_ndt+\sigma_{n1}dB_{1t}+\cdots+\sigma_{nm}dB_{mt}+dA_t^{(1)}-dA_t^{(2)},\nonumber\\
{\bf X}_0&=&{\bf x},\nonumber
\end{eqnarray} holds $P_{\bf x}$-a.s.,  $\forall{\bf x}\in
S$. Furthermore we assume
\begin{displaymath}
P_{\bf x}({\bf X}_t\in S,\forall t\geqslant 0)=1,\quad \forall {\bf
x}\in S.
\end{displaymath}
\end{enumerate}
\end{assumption}

\begin{remark} The probability space $\Omega$ with the filtration
$\{\mathcal{F}_t\}$ is not fixed a priori. It is part of an
admissible policy. The filtration $\{\mathcal{F}_t\}$ is assumed to
be right continuous and $\mathcal{F}_0$ is assumed to contain every
$P_{\bf x}$-negligible set for any ${\bf x}\in S$.
\end{remark}

\begin{proposition}
Both $A_t^{(1)}$ and $A_t^{(2)}$ are nontrivial in the sense that
for any $T>0$,
\begin{displaymath}
P_{\bf x}(A_t^{(i)}=A_0^{(i)},\ \ \forall t\in[0,T])=0,\quad \forall
{\bf x}\in S,\ \ i=1,2.
\end{displaymath}
\end{proposition}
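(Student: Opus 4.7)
The plan is to argue by contradiction, handling the case $i=2$ in detail (the case $i=1$ being symmetric, with the roles of $\gamma$ and $\beta$ reversed). Suppose there exist ${\bf x}\in S$ and $T>0$ such that $P_{\bf x}(E_2)>0$, where
\begin{displaymath}
E_2:=\{A_t^{(2)}=A_0^{(2)},\,\forall t\in[0,T]\}.
\end{displaymath}
On this event the controlled SDE (\ref{cdiff}) for the $n$-th coordinate reduces to
\begin{displaymath}
X_{nt}=x_n+\int_0^t\mu_n(\bar{\bf X}_s)\,ds+M_t+(A_t^{(1)}-A_0^{(1)}),\qquad t\in[0,T],
\end{displaymath}
where $M_t:=\sum_j\int_0^t\sigma_{nj}(\bar{\bf X}_s)\,dB_{js}$ is a continuous martingale whose quadratic variation satisfies $\langle M\rangle_t\geq\lambda t$ by the nondegeneracy of $\sigma\sigma^T$ on the compact region $S$, while the control increment $A_t^{(1)}-A_0^{(1)}$ is nonnegative.

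The first step is to exploit admissibility: since $X_{nt}\leq\gamma(\bar{\bf X}_t)$ almost surely and $\gamma,\mu_n$ are bounded on $S$, the nonnegativity of the upward push forces, on $E_2$,
\begin{displaymath}
\sup_{t\in[0,T]} M_t\leq C:=\sup_S\gamma -x_n+T\sup_S|\mu_n|.
\end{displaymath}
In other words, $E_2\subseteq\{\sup_{[0,T]}M_t\leq C\}$. The second step invokes the Dambis--Dubins--Schwarz representation $M_t=\widetilde B_{\langle M\rangle_t}$ for some Brownian motion $\widetilde B$ on a possibly enlarged space; because $\langle M\rangle_T\geq\lambda T>0$, one has $\sup_{[0,T]}M_t\geq\sup_{[0,\lambda T]}\widetilde B_s$, and this supremum exceeds any finite threshold with positive probability.

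The main obstacle, and the crux of the proof, is upgrading the immediate consequence $P_{\bf x}(E_2)<1$ into the sharp assertion $P_{\bf x}(E_2)=0$, since the events $E_2$ and $\{\sup M_t>C\}$ are correlated through the adapted control. I plan to iterate the bound via the strong Markov property of the joint process $({\bf X}_t,A_t^{(1)},A_t^{(2)})$ on a partition $0=t_0<t_1<\cdots<t_N=T$: writing
\begin{displaymath}
E_2=\bigcap_{k=0}^{N-1}\bigl\{A^{(2)}\text{ constant on }[t_k,t_{k+1}]\bigr\},
\end{displaymath}
successive conditioning on $\mathcal{F}_{t_k}$ and applying Steps 1--2 to the restarted process (whose state ${\bf X}_{t_k}$ lies in $S$) bounds each conditional factor by some $p<1$, uniformly in the starting state since $\sup_S\gamma-\inf_S\beta$ is finite. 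The delicate point is the choice of partition: subintervals must be short enough that the uniform bound applies at the restarted chain, yet $\langle M\rangle_{t_{k+1}}-\langle M\rangle_{t_k}\geq\lambda(t_{k+1}-t_k)$ must remain large enough to yield a genuine exit probability bounded away from zero. Balancing these (and using uniform ellipticity together with boundedness of the state space) drives $P_{\bf x}(E_2)\leq p^N\to 0$, giving the contradiction. The same scheme applied to the lower barrier $\beta$ handles $i=1$.
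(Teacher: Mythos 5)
Your Steps 1--2 are sound as far as they go: on the event $E_2$ the constraint $X_{nt}\leqslant\gamma(\bar{\bf X}_t)$ together with the nonnegativity of $A^{(1)}_t-A^{(1)}_0$ forces $\sup_{[0,T]}M_t\leqslant C$, and the Dambis--Dubins--Schwarz argument with $\langle M\rangle_T\geqslant\lambda T$ gives $P_{\bf x}(E_2)\leqslant P\bigl(\sup_{[0,\lambda T]}\widetilde B\leqslant C\bigr)<1$. The gap is exactly at the point you flag as ``delicate'': the iteration does not close. The per-interval threshold cannot be taken smaller than a fixed constant of order $\sup_S\gamma-\inf_S\beta>0$ (the restarted state $X_{n,t_k}$ may sit anywhere in the strip, in particular near the lower barrier), so the per-interval escape probability on a window of length $\delta=T/N$ is of order $\exp(-c/\delta)=\exp(-cN/T)$. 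Hence $p_N^N=(1-\varepsilon_N)^N$ with $N\varepsilon_N\to 0$, and the product tends to $1$, not $0$. No choice of partition balances the two requirements you list: shrinking the mesh kills the escape probability exponentially while the number of factors grows only linearly.

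This failure is not repairable, because the sharp assertion $P_{\bf x}(A^{(i)}_t=A^{(i)}_0,\ \forall t\in[0,T])=0$ is false as literally stated: take $\mathcal{S}$ to be the reflecting diffusion on $S={\mathbb{R}}^{n-1}\times[\tilde a,\tilde b]$ (the very policy Theorem 4.2 declares optimal) started at an interior point. Then $A^{(2)}$ is the boundary local time at $\tilde b$ and vanishes identically on $[0,T]$ on the event $\{\tau_{\tilde b}>T\}$, which has positive probability for every $T>0$. Your own Step 1--2 estimate, which yields only $P_{\bf x}(E_2)<1$, is in fact the best possible conclusion. The paper's proof is an entirely different, qualitative one-liner: if $A^{(1)}$ and $A^{(2)}$ were both a.s.\ constant on $[0,\infty)$ the nondegenerate uncontrolled diffusion would leave the strip $S$ with positive probability, and with only one of them acting the process still cannot be confined to $S$; this establishes only the weaker claim that neither process is a.s.\ constant for all time, which is what the proposition can honestly assert. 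You should either weaken the statement to that form (for which your Step 1--2 bound, applied on $[0,\infty)$ via $\langle M\rangle_\infty=\infty$, gives a clean quantitative proof) or abandon the attempt to force the probability to zero on a fixed finite horizon.
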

\begin{proof}
If both $A_t^{(1)}$ and $A_t^{(2)}$ are trivial, ${\bf X}_t$ will
hit every open region of positive Lebesgue measure in
${\mathbb{R}}^n$ with positive probability, but this is a
contradiction since ${\bf X}_t$ is concentrated on $S$. If either
$A_t^{(1)}$ or $A_t^{(2)}$ is trivial, ${\bf X}_t$ can not be
concentrated on $S$ which again is a contradiction.
\end{proof}

Define the following notations:
\begin{eqnarray*}
\Delta A_t^{(i)}&=&A_t^{(i)}-A_{t^-}^{(i)},\quad t\geqslant 0,
i=1,2,\\
\Delta {\bf X}_t&=&{\bf X}_t-{\bf X}_{t^-},\quad t\geqslant
0,\\
 \Delta W({\bf X}_t)&=&W({\bf X}_t)-W({\bf X}_{t^-}),\quad
t\geqslant 0.
\end{eqnarray*} Then due to the fact that $A_t^{(1)},A_t^{(2)}$
are the minimal decomposition of a bounded variation process into a
difference of two increasing processes, $\Delta A_t^{(1)}\cdot\Delta
A_t^{(2)}=0,\ \forall t\geqslant 0$. By convention we let
\begin{displaymath}
B_{1t}=\cdots=B_{nt}=0,\ A_{t}^{(1)}=A_{t}^{(2)}=0,\quad \forall
t<0,
\end{displaymath}
so that
\begin{displaymath}
\Delta A_0^{(i)}=A_0^{(i)},\quad i=1,2,\quad {\bf X}_0={\bf x}\quad
P_{\bf x} {\rm\ a.s.},\quad {\bf x}\in S.
\end{displaymath}

Notice that the integrals in (\ref{Acond}) involve the possible
jumps at time $0$ so that they are the sum of the integrals over
$(0,\infty)$ as well as $A_0^{(i)},i=1,2$. In particular, the jump
only happens to the $x_n$ coordinate. In what follows, we use
$A_t^{(i),c} (i=1,2)$ to denote the continuous part of the processes
$A_t^{(i)},i=1,2$.

\begin{theorem}\label{Veri}
Assuming Assumptions \ref{assumpHf}, \ref{assumAB}, \ref{unif1},
\ref{unif2}, \ref{assumpgen} and  that the curves
$\tilde{a}(\bar{\bf x})$, $\tilde{b}(\bar{\bf x})$ being smooth. Let
$k_{\mathcal S}({\bf x})$ be given by the following
\begin{eqnarray}\label{kcostjump}
k_{\mathcal{S}}({\bf x})=&&E_{\bf x}\left(\int_0^\infty e^{-\alpha
t}h({\bf X_t})dt\right)\\
&&+E_{\bf x}\left(\int_0^\infty e^{-\alpha t}\left(f_1({\bf
X}_t)dA_t^{(1),c}+f_2({\bf X}_t)dA_t^{(2),c}\right)\right)\nonumber\\
&&+E_{\bf x}\left(\sum_{0\leqslant t<\infty}e^{-\alpha
t}\left(\int_{X_{nt^-}}^{X_{nt^-}+\Delta A_t^{(1)}} f_1({\bf
X}_t)dy\right.\right.\nonumber\\
&&\left.\left.+\int_{X_{nt^-}-\Delta A_t^{(2)}}^{X_{nt^-}} f_2({\bf
X}_t)dy\right)\right),\nonumber
\end{eqnarray}
then
\begin{enumerate}

\item For any admissible policy $\mathcal{S}$, $W({\bf x})\leqslant k_{\mathcal S}({\bf
x}),\ \forall {\bf x}\in{\mathbb{R}}^n$.

\item $W({\bf x})= k_{\mathcal S}({\bf
x}),\ \forall {\bf x}\in{\mathbb{R}}^n$, if and only if
$S={\mathbb{R}}^{n-1}\times[\tilde{a},\tilde{b}]$, where
$\tilde{a}(\bar{\bf x}),\tilde{b}(\bar{\bf x})$ are given in Theorem
\ref{Wcond}, the process ${\bf X}_t$ is the reflecting diffusion on
$S$, and $\mathcal{S}=(A_t^{(1)},A_t^{(2)})$ where $A_t^{(1)}$
increases only when ${\bf X}_t$ is on the boundary $(\bar{\bf
x},\tilde{a}(\bar{\bf x}))$ and $A_t^{(2)}$ increases only when
${\bf X}_t$ is on the boundary $(\bar{\bf x},\tilde{b}(\bar{\bf
x})),\ \forall\bar{\bf x}\in{\mathbb{R}}^{n-1}$.
\end{enumerate}
\end{theorem}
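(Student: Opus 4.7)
The plan is a standard verification argument: apply the change-of-variables formula for semimartingales to $e^{-\alpha t}W(\mathbf{X}_t)$, use the pointwise inequalities from Theorem \ref{Wcond} to convert the resulting expression into a lower bound on $k_{\mathcal{S}}(\mathbf{x})$, and then identify the case of equality. Since $W \in C^{2,\ldots,2}(\mathbb{R}^n)$ and only $X_n$ has a singular/jump component (driven by $A^{(1)}-A^{(2)}$), Itô's formula gives
\begin{eqnarray*}
e^{-\alpha T}W(\mathbf{X}_T) &=& W(\mathbf{x}) + \int_0^T e^{-\alpha t}(\mathcal{L}-\alpha)W(\mathbf{X}_t)\,dt + M_T \\
&& + \int_{0}^{T} e^{-\alpha t}\frac{\partial W}{\partial x_n}(\mathbf{X}_t)\,(dA_t^{(1),c}-dA_t^{(2),c}) \\
&& + \sum_{0\le t\le T} e^{-\alpha t}\bigl(W(\mathbf{X}_t)-W(\mathbf{X}_{t^-})\bigr),
\end{eqnarray*}
where $M_T$ is a local martingale (stopped along a localizing sequence if necessary). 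The jump term is rewritten, using $\Delta A^{(1)}\Delta A^{(2)}=0$ and the fundamental theorem of calculus in the $x_n$ variable, as
$$\sum_{0\le t\le T}e^{-\alpha t}\!\left(\int_{X_{nt^-}}^{X_{nt^-}+\Delta A_t^{(1)}}\!\!\frac{\partial W}{\partial x_n}(\bar{\mathbf{X}}_t,y)\,dy \;-\; \int_{X_{nt^-}-\Delta A_t^{(2)}}^{X_{nt^-}}\!\!\frac{\partial W}{\partial x_n}(\bar{\mathbf{X}}_t,y)\,dy\right).$$

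Next I would take $E_{\mathbf{x}}$ on both sides, kill the martingale part (by localization and the integrability hypothesis (\ref{Acond}) on $A^{(i)}$ together with the linear growth of $W$, which follows from the boundedness of $\partial W/\partial x_n$ between $-f_1$ and $f_2$), and rearrange to isolate $W(\mathbf{x})$. The three pointwise facts from Theorem \ref{Wcond} (valid \emph{everywhere} on $\mathbb{R}^n$)
$$\alpha W - \mathcal{L}W \le h,\qquad -\frac{\partial W}{\partial x_n}\le f_1,\qquad \frac{\partial W}{\partial x_n}\le f_2$$
then yield, after substituting the jump identity,
\begin{eqnarray*}
W(\mathbf{x}) &\le& E_{\mathbf{x}}\!\int_0^T e^{-\alpha t}h(\mathbf{X}_t)\,dt + E_{\mathbf{x}}\!\int_0^T e^{-\alpha t}\bigl(f_1(\mathbf{X}_t)dA_t^{(1),c}+f_2(\mathbf{X}_t)dA_t^{(2),c}\bigr) \\
&& + E_{\mathbf{x}}\sum_{0\le t\le T}e^{-\alpha t}\!\left(\int_{X_{nt^-}}^{X_{nt^-}+\Delta A_t^{(1)}}\!\! f_1\,dy + \int_{X_{nt^-}-\Delta A_t^{(2)}}^{X_{nt^-}}\!\!f_2\,dy\right) + E_{\mathbf{x}}\,e^{-\alpha T}W(\mathbf{X}_T).
\end{eqnarray*}
Letting $T\to\infty$ and noting that $e^{-\alpha T}E_{\mathbf{x}}|W(\mathbf{X}_T)|\to 0$ (again using linear growth of $W$ and (\ref{Acond})) gives $W(\mathbf{x})\le k_{\mathcal{S}}(\mathbf{x})$, which is assertion (1).

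For assertion (2), equality throughout the chain forces each inequality above to be a pointwise equality $P_{\mathbf{x}}$-a.s.\ on the support of the relevant measure. The identity $\alpha W -\mathcal{L}W = h$ holds exactly on $\mathbb{R}^{n-1}\times[\tilde{a},\tilde{b}]$, so $\mathbf{X}_t$ must stay in this strip for Lebesgue-a.e.\ $t$; by right-continuity, $S=\mathbb{R}^{n-1}\times[\tilde{a},\tilde{b}]$. The continuous control parts must satisfy $-\partial W/\partial x_n=f_1$ on $\mathrm{supp}(dA^{(1),c})$ and $\partial W/\partial x_n=f_2$ on $\mathrm{supp}(dA^{(2),c})$, which by Theorem \ref{Wcond} means $A_t^{(1),c}$ increases only when $X_{nt}=\tilde{a}(\bar{\mathbf{X}}_t)$ and $A_t^{(2),c}$ only when $X_{nt}=\tilde{b}(\bar{\mathbf{X}}_t)$; an analogous analysis of the jump terms shows jumps are possible only from $\tilde{a}$ upward or from $\tilde{b}$ downward, and they must not overshoot. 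These are precisely the conditions characterizing the reflecting diffusion on $S$; standard Skorohod-type constructions (see e.g.\ \cite{Taksar85}) show existence and give the converse direction, that this reflecting control indeed realizes $W=k_{\mathcal{S}}$.

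The main technical obstacle is verifying the integrability needed to discard the martingale and the terminal term $e^{-\alpha T}W(\mathbf{X}_T)$ uniformly over admissible policies; this uses that $|\partial W/\partial x_n|\le\max(\|f_1\|_\infty,\|f_2\|_\infty)=M$ (so $W$ grows at most linearly in $x_n$) combined with (\ref{Acond}) and the bounds on $\mu,\sigma$. A secondary subtlety is the careful handling of the jump representation $\Delta W(\mathbf{X}_t)=\int_{X_{nt^-}}^{X_{nt}}\partial_{x_n}W\,dy$, which relies on the fact that jumps of $\mathbf{X}_t$ occur only in the $x_n$-coordinate, together with $\Delta A^{(1)}\Delta A^{(2)}=0$ from the minimal decomposition hypothesis.
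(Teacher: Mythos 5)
Your proposal is correct and follows essentially the same route as the paper: apply the (generalized) It\^o formula to $e^{-\alpha t}W({\bf X}_t)$, represent the jumps of $W$ as $\int_{X_{nt^-}}^{X_{nt}}\partial_{x_n}W\,dy$ using $\Delta A^{(1)}_t\Delta A^{(2)}_t=0$, invoke the pointwise inequalities of Theorem \ref{Wcond} to conclude $k_{\mathcal S}-W\geqslant 0$ as a sum of nonnegative terms, and characterize equality by forcing each term to vanish. The only detail the paper handles that you pass over is the extension (\ref{extcost}) of $k_{\mathcal S}$ to starting points outside $S$ (needed since admissible policies are only defined for ${\bf x}\in S$), but this is a minor bookkeeping step rather than a difference in method.
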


\begin{remark}
The cost function consists of several parts. The first integral in
(\ref{kcostjump}) is the holding cost. The second integral is a
control cost associated with the increment of controls $A_t^{(i)}$
($i=1,2$) in the continuous part. The last integral is a control
cost associated with the jumps in $A_t^{(i)},i=1,2$ (or equivalently
jumps in ${\bf X}_t$). We further extend $k_{\mathcal{S}}({\bf x})$
outside the region ${\mathbb{R}}^{n-1}\times[\beta,\gamma]$ for two
continuous functions $\beta(\bar{\bf x})<\gamma(\bar{\bf x}),\
\forall\bar{\bf x}\in{\mathbb{R}}^{n-1}$ as the following:
\begin{eqnarray}\label{extcost}
k_{\mathcal{S}}({\bf x})&=&k_{\mathcal{S}}(\bar{\bf
x},\beta(\bar{\bf x}))+\int_{x_n}^{\beta(\bar{\bf x})} f_1(\bar{\bf
x},u)du,\quad \forall{\bf
x}=(\bar{\bf x},x_n)\in{\mathbb{R}}^{n-1}\times(-\infty,\beta),\\
k_{\mathcal{S}}({\bf x})&=&k_{\mathcal{S}}(\bar{\bf
x},\gamma(\bar{\bf x}))+\int_{\gamma(\bar{\bf x})}^{x_n}f_2(\bar{\bf
x},u)du,\quad \forall{\bf x}=(\bar{\bf
x},x_n)\in{\mathbb{R}}^{n-1}\times(\gamma,\infty),
\end{eqnarray} and we are looking for an admissible control $\mathcal{S}$ such
that
\begin{equation}\label{winf}
W^*({\bf x})=\inf_{\mathcal{S}\in\mathbb{S}}k_{\mathcal S}({\bf
x}),\quad \forall{\bf x}\in{\mathbb{R}}^n,
\end{equation} where $\mathbb{S}$ is the set of all admissible
control policies.
\end{remark}

\begin{proof}[Proof of Theorem \ref{Veri}]
\begin{enumerate}
\item Consider the diffusion given in (\ref{cdiff}) with ${\bf x}\in S$.
Applying the generalized Ito formula to $e^{-\alpha t}W({\bf X}_t)$
(see \cite{Harrison83}) yields
\begin{eqnarray}\label{gIto}
e^{-\alpha t}W({\bf X_t})&=&W({\bf x})-\alpha\int_0^t e^{-\alpha
s}W({\bf X}_s)ds+\int_0^t e^{-\alpha s}\mathcal{L}W({\bf
X}_s)ds\nonumber\\
&&+\int_0^t e^{-\alpha s}\nabla W({\bf
X}_s) \cdot \sigma({\bf X}_s) d{\bf B}_{s}\\
&&+\int_0^te^{-\alpha s}\frac{\partial }{\partial x_n}W({\bf
X}_s)(dA_{s}^{(1),c}-dA_s^{(2),c})+\sum_{0< s\leqslant t}e^{-\alpha
s}\Delta W({\bf X}_s).\nonumber
\end{eqnarray} Using the following identity
\begin{displaymath}
W({\bf x})+\sum_{0< s\leqslant t}e^{-\alpha s}\Delta W({\bf
X}_s)=W({\bf X}_{0^-})+\sum_{0\leqslant s\leqslant t}e^{-\alpha
s}\Delta W({\bf X}_s),
\end{displaymath}
and taking expectation of both sides of (\ref{gIto}) with respect to
$P_{\bf x}$ and let $t\to\infty$, we get the following:
\begin{eqnarray}\label{widen}
W({\bf x})&=&E_{\bf x}\left(\int_0^\infty e^{-\alpha
t}\left(\alpha-\mathcal{L}\right)W({\bf X}_t)dt\right)\\
&&-E_{\bf x}\left(\int_0^\infty e^{-\alpha
t}\frac{\partial}{\partial x_n}W({\bf
X}_t)(dA_{t}^{(1),c}-dA_t^{(2),c})\right)\nonumber\\
&&-E_{\bf x}\left(\sum_{0\leqslant t <\infty}e^{-\alpha t}\Delta
W({\bf X}_t)\right).\nonumber
\end{eqnarray} Therefore
\begin{eqnarray}\label{kmW}
&&\quad\quad \quad k_{\mathcal S}({\bf x})-W({\bf x})\\
&&=E_{\bf x}\left(\int_0^\infty
 e^{-\alpha t}\left[h({\bf X}_t)-(\alpha-\mathcal{L})W({\bf
X}_t)\right]dt\right)\nonumber\\
&&+E_{\bf x}\left(\int_0^\infty e^{-\alpha t}\left[f_1({\bf
X}_t)+\frac{\partial }{\partial x_n}W({\bf
X}_t)\right]dA_t^{(1),c}\right)\nonumber\\
&&+E_{\bf x}\left(\int_0^\infty e^{-\alpha t}\left[f_2({\bf
X}_t)-\frac{\partial }{\partial x_n}W({\bf X}_t)\right]dA_t^{(2),c}\right)\nonumber\\
&&+E_{\bf x}\left(\sum_{0\leqslant t<\infty}e^{-\alpha t}\Delta
W({\bf X}_t)\right)\nonumber\\
&&+E_{\bf x}\left(\sum_{0\leqslant t<\infty}e^{-\alpha
t}\left(\int_{X_{nt^-}}^{X_{nt^-}+\Delta A_t^{(1)}} f_1({\bf
X}_t)dy+\int_{X_{nt^-}-\Delta A_t^{(2)}}^{X_{nt^-}} f_2({\bf
X}_t)dy\right)\right).\nonumber
\end{eqnarray}
By Theorem \ref{Wcond}, the first three integrands in (\ref{kmW})
are all nonnegative for the process ${\bf X}_t$ staying in the
region $S$.

Define the sets \begin{displaymath}\Gamma_+=\{t\geqslant 0: \Delta
A_t^{(1)}>0\},\quad \Gamma_-=\{t\geqslant 0: \Delta A_t^{(2)}>0\},
\end{displaymath} then $\Gamma_+\cap\Gamma_-=\phi$. Rewrite the last
two expectations of (\ref{kmW}) as
\begin{eqnarray*}
&&E_{\bf x}\left(\sum_{t\in\Gamma_+}e^{-\alpha
t}\int_{X_{nt^-}}^{X_{nt^-}+\Delta A_t^{(1)}}\left[
\frac{\partial}{\partial x_n}W({\bf X}_t)+f_1({\bf
X}_t)\right]dy\right)\\
&&+E_{\bf x}\left(\sum_{t\in\Gamma_-}e^{-\alpha
t}\int_{X_{nt^-}-\Delta A_t^{(2)}}^{X_{nt^-}}\left[
-\frac{\partial}{\partial x_n}W({\bf X}_t)+f_2({\bf
X}_t)\right]dy\right).
\end{eqnarray*} By Theorem \ref{Wcond} this quantity is
nonnegative, and this shows $k_{\mathcal{S}}({\bf x})\geqslant
W({\bf x}),\forall {\bf x}\in S$.

Due to the extension (\ref{extcost}), we proved
$k_{\mathcal{S}}({\bf x})\geqslant W({\bf x}),\forall {\bf
x}\in{\mathbb{R}}^n$.

\item If
$S={\mathbb{R}}^{n-1}\times[\tilde{a},\tilde{b}]$ and the process
${\bf X}_t$ is the reflecting diffusion on $S$, then by Theorem
\ref{Wcond}, the first integral in (\ref{kmW}) is obviously zero. As
to the second and third integrals in (\ref{kmW}), because
$dA_t^{(1)},dA_t^{(2)}$ are zero whenever ${\bf X}_t$ is in
${\mathbb{R}}^{n-1}\times({a},{b})$, while at the boundary where
$A_t^{(1)},A_t^{(2)}$ increases, the integrands are zero, these two
integrals are zero too. The last two expectations are also zero due
to this construction hence $W({\bf x})= k_{\mathcal S}({\bf
x}),\forall {\bf x}\in S$.

On the other hand, suppose $W({\bf x})= k_{\mathcal S}({\bf
x}),\forall {\bf x}\in S$, then all the expectations in (\ref{kmW})
must be zero. Assume $S={\mathbb{R}}^{n-1}\times[\beta,\gamma]$ and
at least one of the inequalities is true: $\beta(\bar{\bf x})\neq
g_1(\bar{\bf x}),\gamma(\bar{\bf x})\neq g_2(\bar{\bf x})$, then due
to the continuity of these four functions we know that the sum of
the first three integrals in (\ref{kmW}) is positive by Theorem
\ref{Wcond}. And because the sum of the last two expectations in
(\ref{kmW}) is nonnegative, it can be seen that $W({\bf x})<
k_{\mathcal S}({\bf x})$. Therefore in order to have $W({\bf x})=
k_{\mathcal S}({\bf x})$, $S$ must be the region
${\mathbb{R}}^{n-1}\times[\tilde{a},\tilde{b}]$.

Again by Theorem \ref{Wcond}, we see that the processes ${\bf X}_t$
and $A_t^{(i)}$ ($i=1,2$) must all be continuous in order to
eliminate the last two expectations in (\ref{kmW}), which implies
$A_t^{(i)}=A_t^{(i)c}$ ($i=1,2$) when $\beta(\bar{\bf
x})=\tilde{a}(\bar{\bf x}),\gamma(\bar{\bf x})=\tilde{b}(\bar{\bf
x})$. Therefore $({\bf X}_t,A_t^{(1)},A_t^{(2)})$ must be the
reflecting diffusion on
${\mathbb{R}}^{n-1}\times[\tilde{a},\tilde{b}]$.
\end{enumerate}
\end{proof}
\begin{remark}
The possible jumps, $\Delta A_t^{(i)},i=1,2,$ only happen at time
zero. When the process ${\bf X}_t$ starts at a point outside the
region ${\mathbb{R}}^{n-1}\times[\tilde{a},\tilde{b}]$, the control
brings it back to this region immediately, and after that, the
process will be a continuous reflected diffusion. The confirmation
of the last assertion is shown below.
\end{remark}

If we let $\gamma=(0,0,...,0,1)^T$, then the reflected diffusion can
be written as
\begin{equation}\label{refdif}
d{\bf X}_t=\mu({\bf X}_t) dt+\sigma({\bf X}_t) d{\bf B}_t+\gamma
dA_t^{(1)}-\gamma dA_t^{(2)},\ \ t>0,
\end{equation} where $A_t^{(1)}$ increases only at the boundary $\tilde{a}(\cdot)$
and $A_t^{(2)}$ increases only at the boundary $\tilde{b}(\cdot)$.

We notice that the reflection only happens to the last component of
the process. Since the two curves $\tilde{a}(\cdot)$ and
$\tilde{b}(\cdot)$ are smooth and uniformly Lipschitz, if we let
$n({\bf x})$ be the inward normal for ${\bf x}$ at the boundary,
then we can show that there exist positive constants $\nu_1,\nu_2$
such that
\begin{eqnarray*}
\forall {\bf x}&=&(\bar{\bf x},\tilde{a}({\bar{\bf x}})),\
(\gamma,n({\bf
x}))\geqslant \nu_1,\\
\forall {\bf x}&=&(\bar{\bf x},\tilde{b}({\bar{\bf x}})),\
(\gamma,n({\bf x}))\leqslant -\nu_2.
\end{eqnarray*}
By a localization technique and Theorem 4.3 in \cite{Lions84}, it
can be shown that there exists a solution $({\bf
X}_t,A_t^{(1)},A_t^{(2)})$ to the reflected diffusion
(\ref{refdif}). This problem is called the Skorohod problem.

\section*{Concluding Remarks} In this paper, we studied a
multi-dimensional stochastic singular control problem via Dynkin
game and Dirichlet form. The value function of the Dynkin game
satisfies a variational inequality problem, and the integrated form
of this value function turns out to be the value function of the
singular control problem. By characterizing the regularities of the
value function of the Dynkin game and its integrated version, we
showed the existence of a classical solution to the
Hamilton-Jacobi-Bellman equation associated with this
multi-dimensional singular control problem, and this kind of
problems were traditionally solved through viscosity solutions. We
also proved that,  under some conditions, the optimal control policy
is given by two curves and the controlled process is the reflected
diffusion between these two curves. Unlike the one dimensional
singular control problem, where under some conditions, the boundary
of the optimal continuation region are given by two points
\cite{Fuku02}, it is much more difficult to characterize the
boundaries of the continuation region in the multi-dimensional
singular control problem. This paper investigates some conditions on
the regularity of value function and the form of optimal singular
control policies of multi-dimensional diffusion, and it provides a
basis for the search of further conditions and further regularities
in this realm.

\section*{Appendix} In this appendix we shall correct an error found in the paper
\cite{Fuku02}. In the paper ``Dynkin Games Via Dirichlet Forms and
Singular Control of One-Dimensional Diffusion''\cite{Fuku02},  the
authors tried to show the existences of a smooth value function and
an optimal policy to a one-dimensional stochastic singular control
problem via Dynkin game and Dirichlet form. The value function
$V(x)$ of a Dynkin game is known to exist \cite{Zab84}, which is the
solution of a variational inequality problem involving Dirichlet
form. The integration of $V(x)$ turns out to be a smooth optimal
return function $W(x)$ for a stochastic singular control problem.
Thus the traditional technique of viscosity solution is avoided.

In their paper, the underlying process is a generalized one
dimensional diffusion process given by
$dX_t=\mu(X_t)dt+\sigma(X_t)dw_t$, in which $w_t$ is a Wiener
process. It is found that a different diffusion process should be
considered in the proofs,  and as a result the main theorem of this
paper should be amended.

In their paper, the infinitesimal generator is defined as (see page
693, Eq. 4.1 in \cite{Fuku02})
\begin{equation}\label{gen}
Lu(x)=\frac{d}{dm}\frac{d}{ds}u(x)=\mu(x)u'(x)+\frac{1}{2}\sigma(x)^2
u''(x),
\end{equation}
where $ds(x)=\dot{s}(x)dx,dm(x)=\dot{m}(x)dx$, and (see Eq. 4.2 in
\cite{Fuku02})
\begin{equation}\label{sam}
\dot{s}(x)
=\exp\left(-\int_0^x\frac{2\mu(y)}{\sigma(y)^2}dy\right),\quad
\dot{m}(x) =
\frac{2}{\sigma(x)^2}\exp\left(\int_0^x\frac{2\mu(y)}{\sigma(y)^2}dy\right).
\end{equation}

The value function $W(x)$ of the stochastic singular control problem
is assumed to satisfy the following PDE (see Eq. 3.23 on page 693 in
\cite{Fuku02})
\begin{equation}
\alpha W(x) -\frac{d}{dm}\frac{d}{ds}W(x) = h(x),
\end{equation}
or equivalently
\begin{equation}
\alpha W(x) -\mu(x)W'(x)-\frac{1}{2}\sigma(x)^2 W''(x) = h(x),
\end{equation}
where (see Eq. 3.21 and Eq. 3.22 in \cite{Fuku02})
\begin{equation}\label{defh}
h(x) = \int_0^x H(y)\dot{s}(y)dy + C,
\end{equation}
and
\begin{equation}\label{defW}
W(x) = \int_a^x
V(y)\dot{s}(y)dy+\frac{1}{\alpha}\left(-\frac{f_1'(a)}{\dot{m}(a)}+h(a)\right).
\end{equation}

Then in the proof of Theorem 3.2 on page 693 in \cite{Fuku02}, the
authors constructed the function
\begin{equation}
U(x)=\alpha W(x)-\frac{d}{dm}\frac{d}{ds}W(x)-h(x),
\end{equation}
and claimed that
\begin{equation}
\frac{1}{\dot{s}(x)}U'(x) = \alpha
V(x)-\frac{d}{ds}\frac{d}{dm}V(x)-H(x).
\end{equation}
This is equivalent to
\begin{equation}\label{defU}
U(x)=\alpha W(x)-\mu(x)W'(x)-\frac{1}{2}\sigma(x)^2W''(x)-h(x),
\end{equation}
and
\begin{equation}\label{hjbV}
\frac{1}{\dot{s}(x)}U'(x) = \alpha
V(x)-\mu(x)V'(x)-\frac{1}{2}\sigma(x)^2V''(x)-H(x).
\end{equation}
However, by a careful examination, it can be seen that the above
proposition is not true in general. The reason here is that $\mu(x)$
and $\sigma(x)$ are both functions of $x$, and when taking the
derivative of $U(x)$, the product rule has to be applied. The
details are shown below.

By the definition of $W(x)$ in (\ref{defW}), $h(x)$ in (\ref{defh})
and $\dot{s}(x)$ in (\ref{sam}) in \cite{Fuku02}, we get
\begin{eqnarray}
W'(x) &&= V(x)\dot{s}(x),\nonumber\\
h'(x) &&= H(x)\dot{s}(x),\nonumber\\
\ddot{s}(x)&&=-\dot{s}(x)\frac{2\mu(x)}{\sigma(x)^2},\nonumber
\end{eqnarray} hence
\begin{eqnarray}
W''(x) &&=
V'(x)\dot{s}(x)-V(x)\dot{s}(x)\frac{2\mu(x)}{\sigma(x)^2},\nonumber\\
W'''(x)&&=V''(x)\dot{s}(x)-2V'(x)\dot{s}(x)\frac{2\mu(x)}{\sigma(x)^2}+V(x)\dot{s}\frac{4\mu(x)^2}{\sigma(x)^4}\nonumber\\
&&\quad
-V(x)\dot{s}(x)\left(2\mu'(x)\sigma(x)^{-2}-4\mu(x)\sigma(x)^{-3}\sigma'(x)\right).\nonumber
\end{eqnarray}
Now if we take the derivative of $U(x)$ in (\ref{defU}) we get
\begin{eqnarray}
U'(x)=&&\alpha
V(x)\dot{s}(x)-\mu'(x)V(x)\dot{s}(x)-\mu(x)V'(x)\dot{s}(x)+\mu(x)V(x)\dot{s}(x)\frac{2\mu(x)}{\sigma(x)^2}\nonumber\\
&&-\sigma(x)\sigma'(x)V'(x)\dot{s}(x)+\sigma(x)\sigma'(x)V(x)\dot{s}(x)\frac{2\mu(x)}{\sigma(x)^2}\nonumber\\
&&-\frac{1}{2}\sigma(x)^2\left(V''(x)\dot{s}(x)-2V'(x)\dot{s}(x)\frac{2\mu(x)}{\sigma(x)^2}+V(x)\dot{s}\frac{4\mu(x)^2}{\sigma(x)^4}\right.\nonumber\\
&&\quad
\left.-V(x)\dot{s}(x)\left(2\mu'(x)\sigma(x)^{-2}-4\mu(x)\sigma(x)^{-3}\sigma'(x)\right)\right)-H(x)\dot{s}(x).\nonumber
\end{eqnarray}
After simplifying this expression and comparing it with (\ref{hjbV})
we should have the following
\begin{equation}
0=-\sigma(x)\sigma'(x)V'(x)+2\mu(x)V'(x),
\end{equation}
which does not hold in general. The following condition should be
added to make it hold.
\begin{equation}\label{result}2\mu(x)=\sigma(x)\sigma'(x)\end{equation}

A second concern of this paper might be more profound. The Dirichlet
form in this paper is defined as (see Eq. 3.3 on page 686 in
\cite{Fuku02})
\begin{equation}\label{dform}
\mathcal{E}(u,v)=\int_{-A}^Au'(x)v'(x)\frac{1}{\dot{m}(x)}dx,\quad
u,v\in\mathcal{F},
\end{equation}
where
\begin{eqnarray}
\mathcal{F}&&=H^1((-A,A);dx)\nonumber\\
&&=\{u\in L^2((-A,A);dx):\ u{\rm\ is\ absolutely\ continuous,\
}u'\in L^2((-A,A);dx)\}.\nonumber
\end{eqnarray}

The authors claimed that this Dirichlet form
$(\mathcal{E},\mathcal{F})$ is regular on $L^2([-A,A];ds)$ and the
associated underlying process is a reflecting barrier diffusion on
$[-A,A]$ with infinitesimal generator $\frac{d}{ds}\frac{d}{dm}$,
i.e., the generator $L$ given in (\ref{gen}). The correspondence is
given by (see Corollary 1.3.1 on page 21 of \cite{Fuku11})
\begin{equation}\label{da}
\mathcal{E}(u,v)=(-Lu,v),\quad u\in \mathcal{D}(L),v\in\mathcal{F},
\end{equation}
where $\mathcal{D}(L)$ is the domain of $L$. Since the underlying
process is a reflecting barrier diffusion on $[-A,A]$,
$\mathcal{D}(L)$ is given by (see page 22 of \cite{Fuku11})
\begin{eqnarray}
\mathcal{D}(L)=\{&&u\in\mathcal{F}: u'{\rm\ is\ absolutely\
continuous,\ }\nonumber\\
&&u''\in L^2((-A,A);dx), u'(-A)=u'(A)=0\}.\nonumber
\end{eqnarray}

Now we try the integration by parts on (\ref{dform}) and get
\begin{eqnarray}
\mathcal{E}(u,v)&&=\int_{-A}^A
u'(x)v'(x)\frac{\sigma(x)^2}{2}\exp\left(-\int_0^x\frac{2\mu(y)}{\sigma(y)^2}dy\right)dx\nonumber\\
&&=-\int_{-A}^A\left(\frac{\sigma(x)^2}{2}u''(x)+\sigma(x)\sigma'(x)u'(x)-\mu(x)u'(x)\right)v(x)\exp\left(-\int_0^x\frac{2\mu(y)}{\sigma(y)^2}dy\right)dx.\nonumber
\end{eqnarray}

Once again, when the condition (\ref{result}) holds, we get
\begin{displaymath}
\frac{\sigma(x)^2}{2}u''(x)+\sigma(x)\sigma'(x)u'(x)-\mu(x)u'(x)=\frac{\sigma(x)^2}{2}u''(x)+\mu(x)u'(x)=Lu(x),
\end{displaymath} and (\ref{da}) holds.

As a conclusion, if the condition (\ref{result}) is added, then all
the results in that paper still hold, but for a very particular Ito
diffusion.

In the following we give another way to amend the results of that
paper which makes the theorems more general. If we just simply
consider the diffusion
\begin{equation}\label{ndiff}
dX_t = \gamma(X_t)dt + \sigma(X_t)dw_t,
\end{equation} where
\begin{displaymath}
\gamma(x)=\sigma(x)\sigma'(x)-\mu(x),
\end{displaymath}
and define the infinitesimal generator
\begin{displaymath}
L_\gamma u(x) = \gamma(x)u'(x)+\frac{1}{2}\sigma(x)^2 u''(x),
\end{displaymath}
while the Dirichlet form is still defined as in (\ref{dform}) and
$\dot{s}(x),\dot{m}(x)$ are still given in (\ref{sam}), then we get
\begin{displaymath}
\mathcal{E}(u,v)=(-L_\gamma u,v).
\end{displaymath} That means the underlying process associated with
the Dirichlet form (\ref{dform}) should be (\ref{ndiff}). With this
in mind, we can examine again the results of that paper
\cite{Fuku02}. Results in Section 2 are classical on variational
inequalities and optimal stopping. In Section 3, the part
$\frac{d}{ds}\frac{d}{dm}$, whenever it appears before Theorem 3.2,
should be replace by $L_\gamma$. Let the functions $h(x),W(x)$ still
be defined as in Eqn. (3.21) (3.22) in that paper, respectively,
then Theorem 3.2 holds intact. But in the proof of this theorem,
after setting
\begin{eqnarray}
U(x)&=&\alpha W(x)-\frac{d}{dm}\frac{d}{ds}W(x)-h(x)\nonumber\\
&=&\alpha
W(x)-\mu(x)W'(x)-\frac{1}{2}\sigma(x)^2W''(x)-h(x)\nonumber,
\end{eqnarray} and taking the derivative of both sides, we should
get
\begin{eqnarray}
\frac{1}{\dot{s}(x)}U'(x) &=& \alpha
V(x)-(\sigma(x)\sigma(x)'-\mu(x))V'(x)-\frac{1}{2}\sigma(x)^2V''(x)-H(x)\nonumber\\
&=&\alpha V(x)-L_\gamma V(x)-H(x).\nonumber
\end{eqnarray} Since Theorem 3.1 has been amended, the rest of the
proof of Theorem 3.2 just follows. Section 4 of that paper is about
a verification theorem, and the results there still hold.

It might be interesting to notice that when $\sigma$ is a constant,
we get $\gamma(x)=-\mu(x)$, and
\begin{equation}
\mathcal{E}(u,v)=\int_{-A}^A
u'(x)v'(x)\frac{\sigma^2}{2}\exp\left(\int_0^x\frac{2\gamma(y)}{\sigma^2}dy\right)dx.
\end{equation}

\section*{Acknowledgments}
In memory of Dr. Michael Taksar.


\end{document}